\newtheorem {thm}{Theorem}[section]
\newtheorem{prop}[thm]{Proposition}
\newtheorem{fact}[thm]{Fact}
\newtheorem {lem}[thm]{Lemma}
\newtheorem{ques}[thm]{Question}
\newtheorem{rem}[thm]{Remark}
\theoremstyle{definition}
\newtheorem{defn}[thm]{Definition}
\def\Ind{\setbox0=\hbox{$x$}\kern\wd0\hbox to 0pt{\hss$\mid$\hss} \lower.9\ht0\hbox to 0pt{\hss$\smile$\hss}\kern\wd0}
\def\Notind{\setbox0=\hbox{$x$}\kern\wd0\hbox to 0pt{\mathchardef \nn=12854\hss$\nn$\kern1.4\wd0\hss}\hbox to 0pt{\hss$\mid$\hss}\lower.9\ht0 \hbox to 0pt{\hss$\smile$\hss}\kern\wd0}
\newcommand{\m}{\mathbb }
\newcommand{\mc}{\mathcal }
\begin{document}

\title[Algebraic relations of Painlev\'e equations]{Algebraic relations between solutions of Painlev\'e equations}
\author{James Freitag and Joel Nagloo}
\address{James Freitag\\ Department of Mathematics, Statistics, and Computer Science\\
	University of Illinois at Chicago\\
	322 Science and Engineering Offices (M/C 249)\\
	851 S. Morgan Street\\
	Chicago, IL 60607-7045}
\email{freitagj@gmail.com}
\address{Joel Nagloo\\ Department of Mathematics, Statistics, and Computer Science\\
	University of Illinois at Chicago\\
	322 Science and Engineering Offices (M/C 249)\\
	851 S. Morgan Street\\
	Chicago, IL 60607-7045}
\email{jnagloo@uic.edu}

\date{\today}

\begin{abstract}
In this manuscript we make major progress classifying algebraic relations between solutions of Painlev\'e equations. Our main contribution is to establish the algebraic independence of solutions of various pairs of equations in the Painlev\'e families; for generic coefficients, we show all algebraic relations between solutions of equations in the same Painlev\'e family come from classically studied B{\"a}cklund transformations. We also apply our analysis of ranks to establish some transcendence results for pairs of Painlev\'e equations from different families. In that area, we answer several open questions of Nagloo (2016), and in the process answer a question of Boalch (2012). 

We calculate model theoretic ranks of all Painlev\'e equations in this article, extending results of Nagloo and Pillay (2017). We show that the type of the generic solution of any equation in the second Painlev\'e family is geometrically trivial, extending a result of Nagloo (2015). We give the first model theoretic analysis of several special families of the third Painlev\'e equation, proving results analogous to Nagloo and Pillay (2017). We also give a novel new proof of the irreducibility of the third, fifth and sixth Painlev\'e equations using recent work of Freitag, Jaoui, and Moosa (2022). Our proof is fundamentally different than the existing transcendence proofs of Watanabe (1998) or Cantat and Loray (2009).

MSC: 34M55, 03C60, 03C98
\end{abstract}
\maketitle

\thanks{J. Freitag is partially supported by NSF CAREER award 1945251 and NSF grant DMS-1700095. J. Nagloo is partially supported by NSF grant DMS-2203508.}

\tableofcontents

\section{Introduction and notation} 
The Painlev\'e equations are six families of nonlinear second order differential equations over $\m C(t) $ with complex parameters. For instance, the second Painlev\'e equation with parameter $\alpha$, $P_{II} ( \alpha) $, is given by\footnote{Throughout our manuscript, $y' := \frac{dy}{dt}$.}: 
\[ P_{II} (\alpha) : \, \, y'' = 2y^3 +ty + \alpha \] where $\alpha \in \m C.$

These equations were isolated over one hundred years ago in the course of answering foundational problems involving analytic continuation. From the beginning of the theory, there was also significant interest in understanding to what extent the solutions to the equations could be expressed in terms of classical or algebraic functions. This question was given a complete answer through the work of the Japanese school of differential algebra\footnote{Specific citations will be given later in the introduction as well as throughout this paper, as we  use this work extensively.}, where the questions were formalized in terms of transcendental/algebraic aspects of a solution of a Painlev\'e equation over $\m C(t)$. 

The Painlev\'e equations have been extensively studied in part because of their relation to physical systems as well as their connection to algebraic geometry and number theory. For instance, $P_{III}$ is connected to the Ising and ferromagnetic models \cite{mccoy1977two, wu1976spin}. Electrodiffusion has been studied using $P_{II}$ \cite{bracken2012Backlund}. $P_{II} $ and $P_{III}$ are connected to behaviour of rogue waves \cite{bilman2020extreme}.
$P_V$ is used in the impenetrable Bose gas model \cite{jimbo1980density}. Scattering on two Aharonov–Bohm vortices can be exactly solved using solutions of $P_{III}$ \cite{bogomolny2016scattering}. See \cite{alves2018solutions}
for additional uses is physics. Solutions of $P_{VI}$ are related to problems around stable vector bundles on curves \cite{hitchin1992poncelet}, while transformations of $P_{VI}$ are related to Markoff triples \cite{bourgain2016markoff}. 

Classifying algebraic relations between Painlev\'e equations is of interest in applications because there is extensive work in concretely expressing solutions of physical systems (or systems from geometry and number theory) in terms of classical functions and solutions to known differential equations using rational or perhaps algebraic formulas - see the physics citations above and \cite{Manin6} for a number theoretic example. The classification of algebraic relations between Painlev\'e equations would be a significant tool in understanding when such expressions are possible. For instance, \cite[page 20]{alves2018solutions} mentions the interest in extending work of \cite{nagloo2015transformations} to additional families of Painlev\'e equations. This extension is one part of our classification. 

Transcendental aspects of solutions of Painlev\'e equations \emph{over $\m C(t)$} were understood even before the work of the Japanese school. These results are most cleanly stated in terms of isolation of types, a notion coming from model theory. Throughout the manuscript, \emph{by a generic Painlev\'e equation}, we mean one where the complex parameters which appear in the equation are algebraically independent over $\mathbb{Q}$.

\begin{fact} \label{isolationPainleve}
Let $\mc X$ denote one of the five Painlev\'e families $II$ through $VI$ and let $\mc X(\alpha)$ denote the Painlev\'e equation with parameter value $\alpha$ (in general a tuple of complex numbers). 

\begin{enumerate}
    \item For generic values of $\alpha$, the equation $\mc X(\alpha )$ isolates a type over $\m C(t)$.\footnote{The notion of a first order formula isolating a type comes from model theory, but we will explain its incarnation in algebraic differential equations. Given a differential field $K$, and a system of differential equations $X$ over $K$ and inequalities $Y$ over $K$, then we say that the systems of inequalities $X$ and $Y$ isolate a type if every solution to the system $X$ in any differential field containing $K$ which satisfies the inequalities $Y$ is a generic solution of the system $X$ over $K$.} That is, $\mc X(\alpha)$ has no order one differential subvarieties over $\m C(t)$ and no solutions in $\m C(t)^{alg}$.
    
    \item There is a discrete Zariski-dense subset of $\alpha $ such that $X(\alpha)$ has a unique order one subvariety defined over $\m C(t)$. In this case, the subvariety is isomorphic to a Riccati equation with no solutions in $\m C(t)^{alg}$. As such the equation of the subvariety isolates a type over $\m C(t)^{alg}$. On this discrete set, the equation $X(\alpha)$ together with the inequation of the subvariety isolates a type over $\m C(t)$. 
    
    \item There is a discrete Zariski-dense subset of $\alpha$ and some finite number $N$ such that $X(\alpha)$ for $\alpha $ in the set has exactly $N$ solutions in $\m C(t)^{alg}$. For $\alpha$ in this discrete set, the equation $X(\alpha)$ and the inequations excluding the algebraic solutions isolate a type over $\m C(t)$. 
    
   \item  When $\mc X$ is $P_{III}, \, P_V,$ or $P_{VI}$, there is a discrete Zariski-dense subset of $\alpha$ such that $X( \alpha)$ for $\alpha $ in the set has exactly $2$ solutions in $\m C(t)^{alg}$. For $\alpha$ in this discrete set, the equation $X(\alpha)$ and the inequations excluding the two algebraic solutions isolate a type over $\m C(t)$. 

\end{enumerate}
\end{fact}

For (1) and (2) arguments for some of the families can be found in \cite{GromakLaineShimomura}. The $P_{II}$ and $P_{IV}$ portions of (3) come from \cite{Murata}. Condition (4) implies condition (3), and (4) applies to $P_{III}, P_V,$ and $P_{VI}$. The $P_{III}$ portion of (4) can be found in \cite{murata1995classical}. The $P_V$ portion of (4) is explained in \cite{watanabe1995solutions}. The $P_{VI}$ portion of (4) comes from \cite{lisovyy2014algebraic}, which gives a complete classification of algebraic solutions of $P_{VI}$. The extensive work of Japanese school of differential algebra went farther, and gives a complete understanding of the transcendental aspects of any \emph{individual solution} to a Painlev\'e equation over differential fields $K$ extending $\m C(t)$; this was carried out in a series of papers \cite{watanabe1998birational, watanabe1995solutions, umemura1997solutions, umemura1998solutions, murata1995classical, ohyama2006studies, okamoto1987studies, okamoto1986studies, okamoto1987studies5} and the results will be explained and used in the specific sections of this paper devoted to each individual Painlev\'e equation. In Section \ref{newstrmin} we do not use the results of the Japanese school, but only the weaker (and classically known) Fact \ref{isolationPainleve}. 

The connection between these transcendence results and model theoretic notions was pointed out in \cite{nagloo2017algebraic}, where Umermura's condition $J$ was related to strong minimality. In Section \ref{newstrmin} we give a new proof of the strong minimality (or Umemura's condition J) of Painlev\'e equations from third, fifth and sixth families with generic coefficients. Our proof technique is novel and avoids the extensive complications of previous proofs \cite{watanabe1998birational, watanabe1995solutions, umemura1997solutions, umemura1998solutions, murata1995classical, ohyama2006studies, okamoto1987studies, okamoto1986studies, okamoto1987studies5} by applying a recent result of \cite{freitag2021any}. Following our new proof of strong minimality of these equations, we turn towards the main subject of this manuscript and the natural extension of the transcendence results of the Japanese school: the classification of algebraic relations between solutions of Painlev\'e equations.

By a classification of algebraic relations, we mean to make a complete explicit list of the (differential) algebraic relations between solutions of Painlev\'e equations, but intermediate qualitative information is also of interest. To obtain such a classification, one would need to: 

\begin{enumerate}

\item Classify algebraic relations between solutions of a fixed Painlev\'e equation. 

\item Classify algebraic relations between solutions of equations from the same family with different parameters. 

\item Classify algebraic relations between solutions of equations from different families. 
\end{enumerate} 

We make substantial progress in each of the three areas, which we summarize in detail in \ref{algindconj}. Item (1) was the driving force between a recent series of papers Nagloo and Pillay \cite{nagloo2014algebraic, nagloo2015geometric, nagloo2017algebraic,nagloo2019algebraic}. In this paper, we also establish new results of this type for the second and third Painlev\'e equations. We make substantial progress on item (2) in this paper - to our knowledge we give the first substantial progress towards this portion of the classification following the discovery of the B\"acklund transformations for each family. Nagloo \cite{nagloo2015transformations} made the first substantial progress on item (3), partially answering a question of Boalch by showing that most pairs of Painlev\'e equations from different families with generic parameters are orthogonal. We complete this work, providing a complete answer to the question of Boalch. In Section \ref{algindconj}, we give a detailed description of our results, which are most cleanly stated using some differential algebraic terminology, which we introduce next. 


\subsection{Model-theoretic and differential algebraic notions}
In establishing the transcendence results mentioned in the overview above, we will use the language, results, and point of view of Kolchin's \emph{Differential Algebraic Geometry} \cite{KolchinDAG} along with the model theory of differential fields \cite{MMP}. The point of view of these two perspectives is very similar, with a universal domain $\mc U$ of Kolchin being a saturated model of the theory of differentially closed fields of characteristic zero, $DCF_0$ in language $\{+ \cdot, \delta , 0, 1 \}$. We will assume without loss of generality that the constants of $\mc U$ are given by the complex numbers $\m C.$ 

Zero sets of finite systems of differential polynomial equations with coefficients in a differential field $F$ are called \emph{differential varieties} over $F$ and are closed sets in the \emph{Kolchin topology}. The complete theory $DCF_0$ has quantifier elimination, and so the definable sets in $\mc U^n$ are given by finite systems of differential polynomial equations and inequalities. That is, the definable sets in $\mc U^n$ are precisely the constructible sets in the Kolchin topology on $\mc U^n.$ We refer the reader to \cite{MMP} for an exposition of the above connections. 

When $K$ is a differential subfield of $\mc U$, and $a $ is a tuple of elements from $\mc U$, by $K \langle  a \rangle, $ we mean the differential field generated by $K$ together with $ a$, which is the same as the field generated by $K$ and the derivatives of all orders of the elements of $ a$. We write $b \in acl(K, a)$ when $b$ is in the algebraic closure of $K\langle  a \rangle,$ denoted $K\langle  a \rangle ^{alg}$. So, $b \in acl(K, a)$ means that over $K$, $b$ is algebraic over $a$ and its derivatives. 

The differential varieties and definable sets we consider in this paper will all be \emph{finite-dimensional}; a differential variety $V$ over $F$ is finite-dimensional if for any $a \in V$, the transcendence degree $td _F ( F \langle a \rangle )$ is finite. The maximal such transcendence degree of any $a \in V$ is called the \emph{order} of $V$. A solution $b \in V$ is called \emph{generic over $F$}\footnote{or simply generic if $F$ has been fixed.} if $td _F ( F \langle b \rangle )$ is equal to the order of $V$. We will sometimes refer to the differential field $F$ generated by the coefficients of the equations and inequalities defining $V$ as the field of definition of $V$. The majority of the differential varieties in this paper will be over $F$, a finitely generated subfield of $\m C(t).$ Sometimes we are particular about the field of definition, and we consider $F$-definable sets, however, if we simply write $V \subset \mc U^n$ is definable, it means that $V$ is definable over some differential subfield of $\mc U$. 

\begin{defn}
A definable set $V \subseteq \mc U$ is \emph{strongly minimal} if it is infinite and has no definable subsets $W$ such that $W$ and $V \setminus W$ are both infinite. 
\end{defn}

From quantifier elimination and the axioms for differentially closed fields, it is easy to see that a differential variety is strongly minimal if and only if it has no proper differential subvarieties of order at least one. If $V$ is a definable set of order $n$, then $V$ is strongly minimal if and only if 
\begin{itemize}
    \item $V$ can not be written as the union of disjoint definable set of order $n$ \emph{and}
    \item For any differential field $K$ extending the field of definition of $V$, and any $ a \in V$, either $a$ is a generic solution of $V$ or $a \in K^{alg}$. 
\end{itemize}

Strongly minimal sets play an important role in this work, and in the work of the Japanese school; strong minimality is equivalent to Umemura's Condition (J) - see \cite{nagloo2017algebraic}. Again, the next two definitions can be made more generally for definable sets (or more generally for types) in arbitrary theories, but we will define only the special case applicable to our setting. 

\begin{defn} \label{trivialdef} Let $V \subset \mc U^m$ be a strongly minimal definable set of order $n$. $V$ is called \emph{geometrically trivial} if over any differential field $F$ extending the field of definition of $V$, given any $a_1, \ldots , a_d \in V$, generic on $V$ over $F$, if $td_F ( F \langle a_1, \ldots , a_d \rangle ) < n \cdot d$, then already for some pair $a_i, a_j,$ of elements of $V$, we have $td _F ( F \langle a_i, a_j \rangle ) < 2 \cdot n.$ 

If $V$ has the stronger property that for distinct $a_1, \ldots , a_d \in V$, we have that $td_F ( F \langle a_1, \ldots , a_d \rangle ) = n \cdot d$, then we call $V$ \emph{strictly disintegrated}. 
\end{defn}

\begin{defn} \label{orthodef} Let $V \subset \mc U ^n$ and $W \subset \mc U^m$ be strongly minimal definable sets. We say that $V$ and $W$ are \emph{nonorthogonal} if there is a definable relation $R \subset V \times W$ such that the projection of $R$ to $V$ is cofinite, the projection of $R$ to $W$ is cofinite, and both of these projection maps is finite-to-one. Otherwise, we say that $V$ and $W$ are orthogonal. 
\end{defn} 

Sometimes we refer to the relation $R$ in the previous definition as a \emph{definable finite-to-finite correspondence}. There is a non-obvious subtle issue at play in the definition. Suppose that $V, W$ are defined over $F$. A relation $R$, should it exist, might only be defined over a proper differential field extension $K \supset F$. In this situation, we say $V$ and $W$ are \emph{weakly orthogonal over $F$}, while $V$ and $W$ are \emph{not weakly orthogonal over $K$.}

 \subsection{Previous results and a detailed summary} \label{algindconj} 
Understanding the algebraic relations between solutions of a fixed Painlev\'e equation with generic coefficients was driving force behind the recent series of papers of Nagloo and Pillay \cite{nagloo2017algebraic, nagloo2014algebraic, nagloo2015geometric}, which we now describe. 

In \cite{nagloo2017algebraic} Nagloo and Pillay conjectured that solutions $y_1, \ldots , y_n$ to a \emph{fixed} Painlev\'e equation whose complex parameters are generic satisfy: \[\text{td} (\m C (t) (y_1, y_1 ' ,\ldots , y_n , y_n') / \m C(t)) = 2n,\] that is, \emph{solutions of a generic Painlev\'e equation together with their derivatives are algebraically independent}.\footnote{Here \emph{generic} means that the tuple of parameters is transcendental and algebraically independent over $\m Q$. So, the results of Pillay and Nagloo apply to $P_{II} ( \pi)$ or $P_{IV} (\pi, e^ \pi)$ but not to $P_{IV} (\pi , 3 \pi )$ or $P_{IV} ( \sqrt {2}, \pi).$}
We will refer to this and some related generalizations we give as the {\bf \emph{algebraic independence conjecture}.} In terms of the previous subsection, the conjecture claims that any Painlev\'e equation with generic coefficients is strongly minimal and strictly disentegrated \ref{trivialdef}. We mention that claims of this nature have been made in the past by Drach and Vesiot, and that the conjecture was established for the first Painlev\'e equation by Nishioka \cite{nishioka2004algebraic}. 

In \cite{nagloo2017algebraic}, Nagloo and Pillay proved a weak version of the conjecture; if equality does not hold in the above equation, then it must be that there are two solutions of the given equation $y_{i_1}, y_{i_2}$ such that \[ \text{td} (\m C (t) (y_{i_1}, y_{i_1} ' ,y_{i_2} , y_{i_2}') / \m C(t)) < 4.\] This weak version of the conjecture follows from establishing the \emph{strong minimality} and \emph{geometric triviality} of the given equation, which Nagloo and Pillay accomplish for all Painlev\'e equations with generic complex parameters \cite{nagloo2017algebraic}. In this paper, we give a fundamentally new proof of these facts for $P_{III}$, $P_V$, and $P_{VI}$. We also establish this conjecture for the $D_7$ family of $P_{III}$, a new result, as this family had not been considered by Nagloo and Pillay. 

In \cite{nagloo2014algebraic}, Nagloo and Pillay go farther, and establish the strong form of their conjecture for each of the families $P_{II}, P_{IV}, P_{V}$ (and prove a weaker statement for $P_{III}$ and $P_{VI}$), again all under the assumption that the complex parameters are generic. Recently, Nagloo \cite{nagloo2019algebraic} strengthened this result to establish the strong form of the conjecture for $P_{III}$ and $P_{VI}.$ In \cite{nagloo2015geometric}, Nagloo establishes the weak form of the conjecture for solutions of $P_{II} ( \alpha)$ as long as $ \alpha \notin \frac{1}{2}+\m Z$. 

Our rank calculations in the case of the solution set of $P_{II}$, which we call $X_{II} ( \alpha) $, show that while $X_{II} (\alpha)$ is not strongly minimal for $ \alpha \in \frac{1}{2} + \m Z$, the equation does have \emph{Morley rank one}.\footnote{A definable set of Morley rank one is a finite union of strongly minimal sets. See \cite{MMP} for details in the case of differential fields.} Specifically, for such $ \alpha$, $X_{II}( \alpha)$ has one order one differential subvariety, which we will denote by $R(\alpha)$. Our analysis allows for the generalization of the arguments of \cite{nagloo2015geometric} proving the geometric triviality of the strongly minimal set $X_{II}(\alpha) \setminus R( \alpha)$ for $\alpha \in \frac{1}{2} + \m Z$. That is, we have established the weak form of the algebraic independence conjecture for \emph{generic solutions over $\m C(t)$ of any of the equations in the second Painlev\'e family}. Note that solutions of a Painlev\'e equation with generic parameters (over $\m Q$) are \emph{automatically} generic solutions over $\m C (t)$ as the equation isolates the generic type, see Fact \ref{isolationPainleve}. So the algebraic independence conjecture for \emph{generic solutions} of arbitrary Painlev\'e equations is more general than the algebraic independence conjecture solutions of Painlev\'e equations with \emph{generic parameters}. 

We also establish in Section \ref{DegeneP3} a strong form of the conjecture for the $D_7$ family of the third Painlev\'e equation, a family in which we know of no previous results toward classifying the algebraic relations between solutions of a fixed equation. More generally, we establish the model theoretic ranks of each Painlev\'e equation from each family (each equation has Morley rank one) including not previously considered families of $P_{III}$ type. Our rank calculations open up the possibility of proving analogous statements for each of the Painlev\'e families, since we show that the generic type of any solution of any of the Painlev\'e equations in any of the families is rank one. 

To summarize, the weak form of the algebraic independence conjecture is known only for generic parameter value equations from the families (this follows from the present paper combined with \cite{nagloo2017algebraic, nagloo2017algebraic}) and any fiber of $P_{II}$ (combining this paper with \cite{nagloo2015geometric}). We establish a weaker condition, \emph{Morley rank one}, for any fiber of any of the families of equations by using various existing results of the Japanese school on irreducibility. We also give a fundamentally new proof of strong minimality for generic $P_{III},$ $P_V$ and $P_{VI}$. 

Finally, we formulate the following strong form of the algebraic independence conjecture for the generic solutions of any Painlev\'e equation: 
\begin{ques} \label{nagloopillay} For which values of the parameters in the Painlev\'e families is it true that if $y_1,  \ldots , y_n$ are generic solutions of that fixed Painlev\'e equation, then \[ \text{td} (\m C (t) (y_1, y_1 ' ,\ldots , y_n , y_n') / \m C(t)) = 2n? \]
\end{ques} 
Without stipulating that the solutions are generic, the question can not have an affirmative answer, since the above equality also does not hold for values of the parameters of equations in the $P_{VI}$-family such that the equation is not geometrically trivial (see Section 6 of the present paper); the fibers of $P_{VI}$ which are not geometrically trivial are nonorthogonal to Manin kernels of elliptic curves and are in one orbit of the group of B\"acklund transformations. In that case, there are algebraic relations among the solutions of the equation coming from torsion packets of elliptic curves. In the case of $P_{II} (\alpha )$ with $\alpha \in \m Z$ there is a so-called auto-B\"acklund transformation giving rise to nontrivial algebraic relations amongst solutions of $P_{II} (\alpha ).$

{\bf Algebraic relations between solutions of equations from the same family with different parameter values.} A notable property of the Painlev\'e families is the presence of bijective differential algebraic maps between the solution sets of the various equations in a fixed family, known as B{\"a}cklund transformations. The existence of these maps plays a key role in this paper. To our knowledge, ours is the first work which aims to show that these classically known transformations are the only algebraic relations between solutions of Painlev\'e equations from the same family with different paramter values. Specifically, we make considerable progress towards an affirmative answer to the following question: 

\begin{ques} \label{np1} For each of the Painlev\'e families, do the groups of affine transformations (B\"acklund transformations) give all instances of nonorthogonality of between distinct fibers of the family of equations? 
\end{ques} 

Our proof techniques here rely on our earlier rank analysis, use of the transformation groups, understanding the families of algebraic solutions, and some number theoretic arguments (using o-minimality and the Hilbert irreducibility theorem with some model theoretic techniques). 

{\bf Algebraic relations between solutions of equations from different families.}
The central question, regarding algebraic relations between Painlev\'e equations from different families is:
\begin{ques} \label{np2} Are generic solutions to equations from distinct Painlev\'e families orthogonal? 
\end{ques} 
After explaining the existing work along these lines, we summarize our results. 

In \cite{nagloo2015transformations}, Nagloo showed that Painlev\'e equations with generic parameters from \emph{many} pairs of the different families (I through VI) are orthogonal, implying that there are no B{\"a}cklund transformations between generic Painlev\'e equations from different families, partially answering a question of P. Boalch. Several of the pairs of families eluded the techniques Nagloo used in \cite{nagloo2015transformations}. 

Specifically, the question of whether generic $P_{II}$ is orthogonal to generic $P_{IV}$ and whether generic $P_{III} $ is orthogonal to generic $P_{V}$ were left open by \cite{nagloo2015transformations}. We answer both of these questions affirmatively, which gives a complete answer to the question of Boalch. So, combining our results with those of Nagloo, we have established: 

\begin{thm} \label{Naggy} 
	Any two Painlev\'e equations which have generic parameters and come from distinct families (I-VI) are orthogonal. 
\end{thm}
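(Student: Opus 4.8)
The plan is to reduce the theorem to two residual cases and then dispatch those by a soft transitivity argument resting on the within-family classification established above. By the work of Nagloo \cite{nagloo2015transformations}, any two equations from distinct families with generic parameters are orthogonal \emph{except possibly} for the pairs $(P_{II}, P_{IV})$ and $(P_{III}, P_{V})$, which were left open. It therefore suffices to prove that generic $P_{II}$ is orthogonal to generic $P_{IV}$ and that generic $P_{III}$ is orthogonal to generic $P_{V}$. I will treat the first pair in detail; the second is identical after fixing the $P_{III}$ parameters and moving those of $P_{V}$, using the within-family classification for the $P_{V}$ family.

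I will work in a saturated model of $\mathrm{DCF}_0$ over the base field $\m C(t)$. Since each equation with generic parameters is strongly minimal (Nagloo--Pillay \cite{nagloo2011algebraic}), its generic type is minimal, hence regular; consequently, as $\mathrm{DCF}_0$ is $\omega$-stable, non-orthogonality ($\not\perp$) is an equivalence relation on the collection of such types. The essential imported input is our within-family classification: for generic parameter tuples $b_1, b_2$, one has $P_{IV}(b_1) \not\perp P_{IV}(b_2)$ only if $b_1$ and $b_2$ lie in a common orbit of the (countable) group of B{\"a}cklund transformations acting on the $P_{IV}$ parameter space.

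Now suppose toward a contradiction that $P_{II}(\alpha) \not\perp P_{IV}(\beta, \gamma)$, where $(\alpha, \beta, \gamma)$ is algebraically independent over $\m Q$. Because $(\beta,\gamma)$ is generic over $\m C(t)^{\mathrm{alg}}(\alpha)$, it admits a continuum of conjugates $(\beta', \gamma')$ over $\m C(t)^{\mathrm{alg}}(\alpha)$, and for each such conjugate there is an automorphism $\sigma$ of the universal domain fixing $\m C(t)^{\mathrm{alg}}(\alpha)$ pointwise with $\sigma(\beta, \gamma) = (\beta', \gamma')$. Such a $\sigma$ fixes $P_{II}(\alpha)$ together with its generic type and carries $P_{IV}(\beta,\gamma)$ to $P_{IV}(\beta',\gamma')$; since non-orthogonality is automorphism-invariant, $P_{II}(\alpha) \not\perp P_{IV}(\beta',\gamma')$. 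Transitivity of $\not\perp$ applied to $P_{IV}(\beta,\gamma) \not\perp P_{II}(\alpha) \not\perp P_{IV}(\beta',\gamma')$ then gives $P_{IV}(\beta,\gamma) \not\perp P_{IV}(\beta',\gamma')$. Choosing $(\beta',\gamma')$ generic over $\m C(t)^{\mathrm{alg}}(\alpha,\beta,\gamma)$---possible since the B{\"a}cklund orbit of $(\beta,\gamma)$ is only countable, whereas the generic conjugates are continuum-many---we produce two $P_{IV}$ equations with jointly generic parameters that are nonorthogonal yet whose parameters lie in distinct B{\"a}cklund orbits, contradicting the within-family classification. Hence $P_{II}(\alpha) \perp P_{IV}(\beta,\gamma)$.

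The construction of the equivalence relation and of the conjugating automorphisms is formal, so the only genuine content is imported: the main obstacle lies entirely in the within-family classification for the $P_{IV}$ and $P_{V}$ families (the statement that within-family nonorthogonality forces a discrete, B{\"a}cklund-type relation on the parameters), together with strong minimality for generic parameters. Once these are in hand, the cross-family orthogonality of the two open pairs---and hence, after combining with \cite{nagloo2015transformations}, the full theorem---follows from the transitivity argument above.
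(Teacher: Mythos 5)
Your reduction to the two residual pairs $(P_{II},P_{IV})$ and $(P_{III},P_V)$ matches the paper, but your treatment of those pairs is genuinely different. The paper argues directly: by Lemma \ref{GST} a nonorthogonality would yield a definable bijection over a small field, which by definability in the parameters would persist on a Zariski-dense set of specializations, including points of $D$ where $X_{IV}$ has Morley degree three while $X_{II}$ always has Morley degree at most two. You instead convert the cross-family question into a within-family one: conjugate the $P_{IV}$ parameters by an automorphism fixing the $P_{II}$ equation, use transitivity of nonorthogonality on minimal types, and contradict Proposition \ref{P4orth} applied to two mutually generic $P_{IV}$ fibers. This is a legitimate and attractive alternative --- it is uniform over both residual pairs, and it neatly sidesteps the obstruction noted in the paper's footnote that the algebraic-solution structures of $P_{II}$ and $P_{IV}$ are too similar for a soft comparison; of course the hard content (Umemura--Watanabe, Murata) is still consumed, just via the within-family propositions rather than via the degree jump on $D$. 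Note that there is no circularity: Propositions \ref{P3orth} and \ref{P4orth} are proved independently of Theorem \ref{Naggy}. For $P_{III}$ versus $P_V$ you could equally well move the $P_{III}$ parameters and invoke Proposition \ref{P3orth}, whose proof is written out in full, rather than the $P_V$ classification whose proof the paper omits.

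One step as written does fail and needs repair: you conjugate $(\beta,\gamma)$ over $\m C(t)^{\mathrm{alg}}(\alpha)$, but the Painlev\'e parameters are complex numbers, so $\beta,\gamma \in \m C \subseteq \m C(t)^{\mathrm{alg}}$ and the tuple has no nontrivial conjugates over that field --- the claimed continuum of conjugates does not exist there. The fix is to work over the small field $\m Q(t,\alpha)^{\mathrm{alg}}$, over which $X_{II}(\alpha)$ is still defined: either choose $(\beta',\gamma') \in \m C^2$ with $(\alpha,\beta,\gamma,\beta',\gamma')$ algebraically independent over $\m Q$ (possible since $\m C$ has infinite transcendence degree) and use homogeneity of the universal domain to realize the conjugation, or pass to new constants in a saturated model, in which case you should note that Proposition \ref{P4orth} and its inputs apply verbatim to generic constant parameters not lying in $\m C$. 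With that substitution the rest of your argument --- automorphism invariance and transitivity of $\not\perp$, countability of the B\"acklund orbit inside $\m Q(\beta,\gamma)$, and the within-family classification --- goes through.
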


The proof of the theorem will be completed in Propositions \ref{Naggy1} and \ref{Naggy2} below. Nagloo \cite{nagloo2015transformations} discusses B{\"a}cklund transformations and orthogonality in detail, explaining the relations between the ideas and their significance in the theory of Painlev\'e equations, and we refer the reader there for a complete account. But briefly, orthogonality of two Painlev\'e equations implies that there can be no algebraic relations between their solutions (even over any differential field $K$ extending $\m C(t)$). 
 
Finally, we  give a variant which subsumes each of Questions \ref{nagloopillay}, \ref{np1}, \ref{np2}:
\begin{ques} \label{nagloopillay1} Let $y_1,  \ldots , y_n$ be generic solutions of Painlev\'e equations such that the following hold:

\begin{itemize} 
\item They are solutions coming from different families or have the property that they are not related by elements of the groups of B{\"a}cklund transformations associated with each equation.

\item The solutions are not from the fibers of $P_{VI}$ corresponding to Manin Kernels of elliptic curves.\footnote{See Section \ref{P6sec}; the algebraic relations between such solutions are completely understood.} 
\end{itemize}
Then, does the following equation hold? \[\text{td} (\m C (t) (y_1, y_1 ' ,\ldots , y_n , y_n') / \m C(t)) = 2n.\]
\end{ques} 

A positive answer to Question \ref{nagloopillay1} would constitute the strongest possible classification of the algebraic relations between solutions of Painlev\'e equations. 

\subsection{Besides algebraic relations} 
Besides our results towards the classification of algebraic relations of Painlev\'e equations, we obtain a number of other interesting new results. 

In Section \ref{newstrmin} we give a new proof of the irreducibility (strong minimality) of $P_{III},$ $P_V,$ and $P_{VI}$ with generic coefficients. Our technique involves understanding the algebraic solutions of the equations in the family while applying a recent result of \cite{freitag2021any}. Our approach to this transcendence result is new, and seems completely different that the proofs for $P_{VI}$ of Watanabe \cite{watanabe1998birational} and the later proof of Cantat and Loray \cite{cantat2009dynamics}. 

In Section \ref{DegeneP3}, we examine the so-called degenerate family of the third Painlev\'e equation, proving new transcendence results for solutions of equations in this family. For instance, we prove that the generic fiber of this family is strongly minimal, geometrically trivial and $\omega$-categorical. We conjecture that for any solution $x$ of an equation in the degenerate $P_{III}$ family, $x$ is algebraically independent from all other solutions of the equation. We prove this statement for all but at most two other solutions. 

Our results on the degenerate $P_{III}$ family are analogous to those of Nagloo and Pillay for the other families of Painlev\'e equations, where the structure of classical solutions to equations in the family was utilized. The degenerate $P_{III}$ family has no classical solutions. Our proofs rely on a (new) technique using details of the Zilber trichotomy from \cite{sanchez2019isolated}. 

\section{A new proof of strong minimality} \label{newstrmin}
In this section, we give a new proof of the strong minimality of Painlev\'e equations from third, fifth and sixth families with generic coefficients. Though the results were previously known (complete citations are given in sections \ref{twotwo}, \ref{P5} and \ref{P6sec}), the existing proofs are difficult and involve extensive computations. 


\begin{thm} \label{strongpain}
For generic parameters, the third, fifth, and sixth Painlev\'e equations are strongly minimal. 
\end{thm}

The proof of Theorem \ref{strongpain} will use a series of lemmas, which apply more generally than the results of the theorem and are themselves of independent interest. In this section, we adopt the notation of \cite{freitag2021any} for our particular case; in particular, consider the following condition: 

\begin{itemize}
\item[$(\mathcal C_m)$]
For any $m$ distinct solutions $a_1, \ldots , a_m$ of $X(a)$, not in $\m Q (t,a)^{alg}$, the collection $$a_1, a'_1, a_2, a'_2, \ldots , a_m, a'_m$$
is algebraically independent over $\m Q (t,a).$
\end{itemize}

\begin{lem} \label{C3}
Suppose $p$ is a stationary nonalgebraic type in $\text{DCF}_0$. If $\mathcal C_3$ holds of realizations of $p$, then $p$ is minimal. If $\mathcal C_2$ holds of realizations of $p$, then $p$ is internal to the constants. 
\end{lem}

The preceeding lemma is proved in \cite[3.5 and 3.6]{freitag2021any}. Note that in \cite{freitag2021any} there are separate conditions $C_m$ and $D_m$, with the distinction being that $D_m$ applies to a complete type, while $C_m$ is written for equations. Here there is no difference since the generic type of the equations we study is isolated by Fact \ref{isolationPainleve}.

\begin{lem} \label{C2i}
The condition $\mc C_2$ holds for the generic fiber of any Painlev\'e equation over the field of definition of the equation\footnote{The field of definition is the extension of $\m Q(t)$ generated by the parameters of the equation - e.g. in the case of $P_{II}(\alpha)$, the field is $\m Q(t, \alpha)$.}. 
\end{lem}

\begin{proof} Let $\mc X$ denote the family of equations being considered and $X(a)$ denote an equation in the family with generic parameter $a$ (one of the Painlev\'e families with $a$ given by an appropriate generic tuple of complex numbers). Let $K = \m Q(t,a)$ denote the field of definition of $X(a)$. 

If $\mc C_2$ fails over $K$ for $X(a)$, then there is a polynomial with coefficients in $K$, $p_1(x,x',y,y')$ which vanishes on some distinct solutions $x\neq y$ of the equation in question, $X(a)$. Perhaps clearing the denominator of $a,$ we can assume that there is a polynomial $p(x,x',y,y',a)$ with coefficient in $\mathbb{Q}(t)$ with the same property. 

Note that by the assumption of genericity of the parameters, by Fact \ref{isolationPainleve}, $x$ and $y$ are generic solutions over $K$. Further, again using isolation, it follows that: 
\begin{equation} \label{PC2} \tag{$\mathfrak{C}_2$}
\text{for any $x \in X(a)$ there is $y \in X(a)$ such that $p(x,x',y,y',a)=0$.}
\end{equation}

Now, as condition \ref{PC2} is a first order property over $\m Q(t)$, it follows that it holds on a definable subset of the space of parameters of the family $\mc X$. Since the property holds for a generic equation in the family, $X(a)$, it holds for equations $X(b)$ for $b$ from a $\m Q(t)$-definable dense set of parameters - so this includes a Zariski-open subset of the parameter space. 

There are two possibilities for the relation given by $p(x,x',y,y',a)=0$: 
\begin{enumerate}
    \item The variable, $y'$ appears nontrivially in $p$ and so for each $x$ there are infinitely many $y$ such that $p(x,x',y,y',a)=0$. 
    \item The variable, $y'$ does not appear in $p$ and so for each $x$ there are finitely many $y$ such that $p(x,x',y,y',a)=0$. 
\end{enumerate} 

In Case (1), this further property holds on the Zariski open subset of the parameter space. This open set includes a point $b$ such that $X(b)$ has an algebraic solution, and by Fact \ref{isolationPainleve}, this holds on some fiber with (finitely) many algebraic solutions as by Fact \ref{isolationPainleve} each family has a dense set of such points $b$. In each case, the generic type over $\m C(t)$ of the fiber is isolated by the equation $X(b)$ and excluding the algebraic solutions. But now take $x$ to be one of the algebraic solutions. Then the relation $p(x,x',y,y',b)=0$ gives an order one subvariety of $X(b)$ defined over $\m C( t)$. But, this is impossible by Fact \ref{isolationPainleve}. 

In Case (2), we have a polynomial relation of the form $p(x,x',y,a)=0$, and by Fact \ref{isolationPainleve} each family $X(a)$ has a dense set of fibers with a \emph{unique} subvariety of order one which is nonorthogonal over $\m Q(t)$ to a Riccati equation with no algebraic solutions.\footnote{In each case, describe this locus...} Taking $x$ to be a solution to such a Riccati equation in fiber $X(b)$, and $y$ algebraic over $x$ via $p$, the order $y$ over $\m Q(t,b)$ is one, so $y$ must be a solution to the Riccati equation as well. But any three solutions to this Riccati subvariety are algebraically independent over $\m C(t)$, by \cite[Proposition 2.2]{nagloo2019algebraic} or \cite[Section 4.3]{freitag2021bounding}, contradicting the existence of such a polynomial $p$. 
\end{proof}

Now we are ready to prove Theorem \ref{strongpain}. 

\begin{proof} 
Let $K = \m Q(t,a)$ denote the field of definition in the statement of the Theorem and $\mc X$ denoting the family of equations (the third, fifth or sixth Painlev\'e family) being considered and $X(a)$ denoting an equation in the family with generic parameter $a$ (a tuple of complex numbers). 

If $X(a)$ is not strongly minimal, then $\mc C_3$ fails for $X(a)$ over $\m Q(t,a)$, so suppose, for a contradiction, that this is the case. Then there is a polynomial $p(x,x',y,y', z,z',a)$ over $\m Q(t)$ and three distinct solutions $(\alpha, \beta, \gamma)$ of $X(a)$ such that $p(\alpha, \alpha ',\beta ,\beta ', \gamma , \gamma',a) = 0 $. We consider two cases based on the form of $p$. Note that since in Lemma \ref{C2i} we proved that condition $\mc C_2$ holds for $X(a)$, by Fact \ref{isolationPainleve}, the $2$-type of a solution is isolated by the equation $X(a)$ - that is every pair of solutions to $X(a)$ is generic over $\m Q(t,a)$. 

\begin{enumerate}
    \item The variable, $z'$ appears nontrivially in $p$ and so for each $x,y$ in $X(a)$ there are infinitely many $z$ such that $p(x,x',y,y',z,z',a)=0$.
    \item \sloppy The variable, $z'$ does not appear in $p$ and so for each $x,y$ in $X(a)$ there are finitely (bounded by the degree) many $z$ such that $p(x,x',y,y',z,a)=0$. 
\end{enumerate} 

In case 1), the conditions on $p$, which are first order (note that the theory of differentially closed fields eliminates the quantifier $\exists ^\infty$), must hold on a Zariski-open subset of the parameters $a$. In each case of $P_{III}, \, P_V, \, P_{VI}$, there is a dense set of parameters $a$ such that $X(a)$ has at least two algebraic solutions. Specializing to one of these fibers, $X(b)$ and letting $x,y$ be algebraic solutions of $X(b)$, it must be that $p$ gives an order one subvariety of $X(b)$ over $\m C(t)$, which contradicts Fact \ref{isolationPainleve} (none of the fibers which have algebraic solutions have order one subvarieties). 

In case 2), again, the conditions on $p$ are first order, and thus hold on a Zariski-open subset of the parameters $a$. Specialize to a parameter value $b$ in the Zariski-open subset for which $X(b)$ has exactly two algebraic solutions. There is a dense set of such fibers with exactly two algebraic solutions in each of the cases of $\mc X = P_{III}, \, P_V, \, P_{VI}$. But now letting $x,y$ be algebraic solutions of $X(b)$, any $z$ satisfying condition $2)$ must be an additional distinct algebraic solution, a contradiction. 
\end{proof}

\begin{ques}
The proof of Theorem \ref{strongpain} uses the existence of fibers of the family $\mc X$ which have exactly two algebraic solutions. This strategy does not work for $P_{II} $ or $P_{IV}$, where the fibers with algebraic solutions have unique algebraic solutions. Is there some similar strategy for establishing the irreducibility of $P_{II}$ and $P_{IV}$ with generic parameters? 
\end{ques}

\section{Painlev\'e two} \label{oneone} 
Much of the differential algebraic information in this section comes from \cite{umemura1997solutions}, whose notation we also follow. We also use the classification of algebraic solutions of equations of the second Painlev\'e family \cite{Murata}. The second Painlev\'e family of differential equations is given by 
\[ P_{II} (\alpha) : \, \, y'' = 2y^3 +ty + \alpha \] where $\alpha$ ranges over the constants. We will consider the following equivalent form of the equation given in Section 2.1 of \cite{umemura1997solutions}: 

\begin{align}
\label{p2} \tag{$X_{II} (\alpha)$} \begin{split} 
\frac{dq}{dt} & =  p-q^2 -\frac{t}{2} \\
\frac{dp}{dt} & =  2pq + \alpha + \frac{1}{2}.
\end{split}
\end{align}

We denote the set of solutions of the previous system by $X_{II} (\alpha )$. It is not hard to eliminate $p$ from the system to see that $$ \frac{d^2 q }{dt^2}  = 2q^3 +tq + \alpha,$$ so $y=q$ is a solution to $P_{II}$. Murata \cite{Murata} shows that $X_{II} ( \alpha)$ has a solution $(p,q)$ in $\m C(t)^{alg}$ if and only if $\alpha \in \m Z$; in this case, the solution is unique and $p,q \in \m C(t).$

For $\alpha \notin \frac{1}{2}+ \m Z$, by Theorem 2.1 of \cite{umemura1997solutions}, $X_{II} (\alpha )$ is strongly minimal. For $\alpha=-\frac{1}{2}$, Umemura and Watanabe \cite{umemura1997solutions}, see 2.7-2.9 on pages 169--170, show that if $K_1$ is a differential field extension of $K$ and if $q_1$ is a solution to $X_ {II} ( -\frac{1}{2}) $ such that the transcendence degree of $K_1 \langle q_1 \rangle $ over $K_1$ is one, then $q_1$ satisfies the Ricatti equation: \begin{align*}
      q_1 ' = q_1 ^2 +\frac{1}{2} t. \end{align*}
In model theoretic terms, this implies that  \[ \left\{q \, | \, q'' = 2q^3 +tq -\frac{1}{2} , \,  q' \neq q^2 + \frac{1}{2} t \right\} \] is strongly minimal, while $X_{II} ( -\frac{1}{2})$ is of Morley rank one and Morley degree two. 

Consider the group $G$ of affine transformations of $\m C$ generated by 

\begin{align*}
    \alpha & \mapsto -1-\alpha \\
    \alpha & \mapsto \alpha +1 \\
    \alpha & \mapsto \alpha -1
\end{align*}

\sloppy For every $g \in G$, there is a bijective rational map $g_* : X_{II} (\alpha ) \rightarrow X_{II} (g(\alpha ))$ \cite[see Section 2 for specific formulas]{umemura1997solutions}. We will sometimes refer to the maps $g_*$ as B\"acklund transformations.  Then if we denote by $X_{II} = \{ (p,q,\alpha) \, | \, (p,q) \in X_{II}(\alpha), \, \alpha \in \m C \},$ we have the following commutative diagram:

\[ \begin{CD}
X_{II} @>g_*>> X_{II}\\
@VV \pi V @VV \pi V\\
\m C @> g  >> \m C
\end{CD}\] 

where $\pi $ is the projection map to the $\alpha$-coordinate.

The differential varieties $X_{II} ( \alpha)$ for $\alpha \in \frac{1}{2} + \m Z$ are thus all isomorphic via B{\"a}cklund transformations, so our above analysis of $X_{II}(-\frac{1}{2})$ applies to $X_{II} ( \alpha )$ for $\alpha \in \frac{1}{2} +\m Z $. We denote by $R(\alpha )$ the Ricatti subvariety of $X_{II} (\alpha )$ for $\alpha \in \frac{1}{2}+ \m Z.$ Note that the degree of $R(\alpha )$ depends on $\alpha \in \frac{1}{2}+ \m Z.$ This can be seen by direct calculations, though the fact that the degree of the exceptional subvarieties can not be bounded uniformly over all $\alpha \in \frac{1}{2} +\m Z $ also follows by a compactness argument and the fact that for a generic coefficient $\alpha, $ $X_{II} ( \alpha)$ is strongly minimal. 

\begin{rem}
Our analysis in the preceding paragraphs contradicts the remarks in subsection 3.7 of \cite{nagloo2017algebraic}, where it is claimed that the Morley rank (and Lascar rank) of $X_{II}(\alpha)$ for $\alpha \in \frac{1}{2} + \m Z$ is two (in particular Fact 3.22 of \cite{nagloo2017algebraic} is incorrect). Parts of the subsequent discussion of that paper depend on this fact, but the main theorems of that paper and the following papers in the series do not. Particularly, this leaves Question 2.9 of \cite{hrushovski1999lascar} still open for order $3$ and higher; Hrushovski and Scanlon constructed a differential variety in which Lascar rank and Morley rank differ - it is not known if there are such examples of lower order than their construction. Freitag and Moosa \cite{freitag2017finiteness} show that Lascar rank and Morley rank are equal for order two differential varieties. The claim of \cite{nagloo2017algebraic} regarding the ranks of the second Painlev\'e family would have given an example of an order three equation with Lascar rank not equal to Morley rank, and so the question of Hrushovski and Scanlon would have then been definitively settled. 
The second Painlev\'e family witnesses the non-definability of Morley degree, rather than Morley rank as had been claimed in \cite{nagloo2017algebraic}. In the coming subsections, we will show that this is the case for each of the Painlev\'e families. 
\end{rem}

Next, we generalize a result of Nagloo \cite{nagloo2015geometric}: 

\begin{prop} The definable set 
\[ X= \left\{ y \, \vert \, y'' = 2y^3 +ty -\frac{1}{2} , \,  y' \neq y^2 + \frac{1}{2} t \right\}= X_{II} \left( -\frac{1}{2} \right) \setminus R(-\frac{1}{2} ) \] 
is strongly minimal and geometrically trivial.
\end{prop}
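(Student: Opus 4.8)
\emph{Overall plan.} The two assertions are proved in turn: strong minimality first, from the cited results of Murata and Umemura--Watanabe, and then geometric triviality by reducing to the strongly minimal case already treated by Nagloo. For strong minimality, since $-\frac12 \notin \m Z$, Murata's classification gives that $P_{II}(-\frac12)$ has no solution in $\m C(t)^{alg}$, so there are no order-zero (algebraic) solutions over $\m C(t)^{alg}$. The Umemura--Watanabe result says that any solution $y_1$ with $\mathrm{td}(K_1\langle y_1\rangle/K_1)=1$, for any differential field $K_1 \supseteq \m C(t)$, satisfies the Riccati equation $y_1' = y_1^2 + \frac12 t$; these are exactly the points we have deleted. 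Hence every element of $X$ has, over every $K_1 \supseteq \m C(t)$, transcendence degree $0$ or $2$. I would then observe that the generic type of $X$ over $\m C(t)^{alg}$ is unique: a generic solution $a$ has $a,a'$ algebraically independent, and the equation forces $\m C(t)^{alg}\langle a\rangle = \m C(t)^{alg}(a,a')$, so any two generic solutions generate isomorphic differential field extensions and are conjugate. This type has Lascar and Morley rank one, because the Umemura--Watanabe dichotomy excludes every nonalgebraic forking extension (a forking extension drops the transcendence degree below $2$, and transcendence degree $1$ cannot occur on $X$). A unique rank-one type whose realizations exhaust $X$ over $\m C(t)^{alg}$ is precisely strong minimality.

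For geometric triviality, the solutions remaining in $X$ are exactly the order-two generic solutions of $P_{II}(-\frac12)$, the same objects Nagloo analyzed for $\alpha \notin \frac12+\m Z$. The strategy is to invoke the Zilber trichotomy for $\mathrm{DCF}_0$ (Hrushovski--Sokolovic) and rule out the two nontrivial cases. First I would show $X$ is orthogonal to the field of constants, which follows from the transcendence analysis of Nagloo and Pillay and depends only on the strong minimality of $X$ and the nonlinear form of $P_{II}$, not on the value of $\alpha$. The trichotomy then reduces geometric triviality to excluding nonorthogonality to the Manin kernel of a simple abelian variety (the nontrivial locally modular case), where I would transport Nagloo's exclusion argument, which rests only on the shape of the equation and the behavior of its generic solutions.

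The main obstacle is this second step, specifically checking that Nagloo's exclusion of the locally modular case transfers intact to $\alpha=-\frac12$. The one genuinely new feature is the Riccati locus $R$ inside the ambient set $X_{II}(-\frac12)$: I must confirm that every algebraic relation bearing on (non)triviality is carried by the order-two solutions remaining in $X = X_{II}(-\frac12)\setminus R$, so that excising $R$ neither destroys nor creates dependencies, and that the arguments for orthogonality to the constants and to Manin kernels nowhere used that the \emph{entire} set $X_{II}(\alpha)$, rather than just its generic part, was strongly minimal. Once this localization to the generic type is justified, Nagloo's analysis applies and yields triviality.
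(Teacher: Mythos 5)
Your proposal follows essentially the same route as the paper: strong minimality is read off from Murata's classification of algebraic solutions together with the Umemura--Watanabe result excluding transcendence degree one over any differential field extension, and geometric triviality is obtained by running Nagloo's Proposition 3.3 after disposing of the constants (nonorthogonality refines transcendence degree) and reducing the locally modular case to Manin kernels via Hrushovski--Sokolovic. The ``main obstacle'' you flag but leave open --- whether Nagloo's exclusion argument survives the passage from $X_{II}(\alpha)$ itself being strongly minimal to only the complement of the Riccati locus being strongly minimal --- is exactly the point the paper settles: it observes that strong minimality enters Nagloo's proof of Proposition 3.3 in a single place (Claim 1, to show the polynomial $F$ there cannot divide its derivative), and that the strong minimality of $X = X_{II}\left(-\frac{1}{2}\right)\setminus\left\{ y \mid y' = y^2 + \frac{1}{2}t\right\}$ suffices at that step, so the remainder of the argument goes through verbatim.
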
 
\begin{proof} As established in the paragraphs above, the strong minimality of this set is a reinterpretation of the results of Umemura and Watanabe \cite{umemura1997solutions}, pages 169--170. With this in place, we will establish the triviality of this definable set via the argument of Nagloo \cite{nagloo2015geometric}, Proposition 3.3.\footnote{This argument depends on strong minimality of the definable set, so it would not have been accessible to Pillay and Nagloo, since their claim was that $X_{II}(\alpha) $ has Morley rank two for $\alpha \in \frac{1}{2}+\m Z$.} 
	
By the strong minimality of the above set $X$, the type of a generic solution to $P_{II} ( -\frac{1}{2})$ is of Morley rank one. The equivalence relation of nonorthogonality refines transcendence degree, so the type of a generic solution to $P_{II} ( -\frac{1}{2})$ is orthogonal to the constants. By a result of Hrushovski and Sokolovic \cite{HrSo},\footnote{For a published reference see \cite[Fact 4.1 part (a)]{sanchez2019isolated}.} any locally modular nontrivial strongly minimal set in differentially closed fields is nonorthogonal to the Manin kernel of a simple abelian variety. From this, it follows that a strongly minimal set $X$ is trivial if \emph{for any generic $x,y \in X$, if $y \in K \langle x \rangle ^{alg}$, then $y \in K \langle x \rangle $} (see \cite{nagloo2015geometric}, Proposition 2.7, for a proof of this fact). The remaining portion of the proof follows \cite{nagloo2015geometric}, Proposition 3.3; it can be verified that the strong minimality of $X_{II}(\alpha)$ is used in only one place in the proof of Proposition 3.3 of \cite{nagloo2015geometric}. Namely, in Claim 1 of the proof of Proposition 3.3, strong minimality is only used to show that the polynomial $F$ (defined in \cite{nagloo2015geometric}, Claim 1 in the proof of Proposition 3.3) cannot divide its derivative. The same applies in our case by strong minimality of $X$. The rest of the argument proceeds identically to \cite{nagloo2015geometric}, Proposition 3.3. 
	\end{proof} 
Because the B{\"a}cklund transformations give definable bijections between the sets $X_{II} (\alpha)$ for $\alpha \in \frac{1}{2} + \m Z$, it is the case that for each such fiber, the generic type of each fiber of the the second Painlev\'e equation is geometrically trivial. Thus, by combining our results with those of \cite{nagloo2015geometric}, we have established the weak form of the algebraic independence conjecture of \ref{algindconj} for all equations in the second Painlev\'e family. 

\subsection{Orthogonality of fibers.} Establishing the orthogonality of two strongly minimal sets defined over a parameter set $A$ requires, in general, one to consider extensions of the parameter set $A$. However, in the case of geometrically trivial strongly minimal sets, we have the following result, which follows from general results of geometric stability theory \cite{GST}, Corollary 5.5, chapter 2, section 5:

\begin{lem} \label{GST}  Let $X$ and $Y$ be nonorthogonal geometrically trivial (or even modular) strongly minimal definable sets over a differential field $K$. Then $X$ and $Y$ are non weakly orthogonal over $K$.  
\end{lem}

Recall, two strongly minimal sets $X$ and $Y$ are nonorthogonal if there is a generically finite-to-finite $K_1$-definable correspondence between $X$ and $Y$ for some $K_1$ extending $K$. We say $X$ and $Y$ are \emph{non weakly orthogonal} over $K$ if they are nonorthogonal and the correspondence can be taken to be definable over $K$. 

\begin{lem} \label{kernel} \begin{enumerate} 
\item The definable set $X_{II}(a) \setminus R(a)$ for $a \in \frac{1}{2}+ \m Z$ is orthogonal to $X_{II}(\alpha)$ where $ \alpha \in \m C $ is transcendental. 
\item The definable set $X_{II}(n)$ for $n \in \m Z$ is orthogonal to $X_{II}(\alpha)$ where $ \alpha \in \m C $ is transcendental. 
\end{enumerate} 
\end{lem} 
\begin{proof} If not, then by Lemma \ref{GST} and applying a suitable B\"acklund transformation there is a formula $\phi (\alpha, x,y)$ over $\m Q (t)$, which gives a generically  finite-to-finite correspondence between $X_{II}(\frac{1}{2}) \setminus R( \frac{1}{2})$ and $X_{II}(\alpha)$. Since $X_{II}(\alpha)$ has no $\m Q(t)^{alg}$-points and is strictly disintegrated, it must be the case that this correspondence is a finite-to-one surjective map from $X_{II}(\frac{1}{2}) \setminus R( \frac{1}{2})$ to $X_{II}(\alpha)$. The collection of $\alpha \in \m C$ such that $\phi (\alpha, x,y)$ is a finite-to-one surjective map is a $\m Q (t)$-definable subset of $\alpha \in \m C$. Strong minimality of the constants, plus the genericity of $\alpha$ implies that $\phi (\alpha, x,y)$ is finite-to-one surjective map from $X_{II}(\frac{1}{2}) \setminus R( \frac{1}{2})$ to $X_{II} ( \alpha) $ for all but finitely many $\alpha \in \m C$. 
Then for some $a \in \m Z$, $\phi (a, x,y)$ is a finite-to-one surjective from $X_{II}(\frac{1}{2}) \setminus R( \frac{1}{2})$ to $X_{II} ( \alpha) $. 
But, $X_{II} ( \alpha) $ has a solution in $\m C (t) ^{alg}$ if and only if $\alpha \in \m Z$ (cf. \cite{Murata}). Because the correspondence between $X_{II}(\frac{1}{2}) \setminus R( \frac{1}{2})$ and $X_{II} ( a) $ is defined over $\m Q(t) $ and is finite-to-one, we must have some element of $\m C ( t) ^{alg}$ in $X_{II}(\frac{1}{2}) \setminus R( \frac{1}{2})$, contradicting Murata's results \cite{Murata}. 

For the second statement, the proof is very similar to the first. On a cofinite subset of $\alpha \in \m C$, we have a $\m Q (t)$-formula $\phi (\alpha, x,y)$ which defines a finite-to-one surjective map from $X_{II}(n)$ to $X_{II} ( \alpha) $. The domain of the map is all but a fixed finite subset of $X_{II}(n)$ for almost all $\alpha$; as this set is definable over $\m Q(t) \subset \m C (t)^{alg}$, it must be that the domain is either all of $X_{II}(n)$ or is all but the unique algebraic solution of $X_{II}(n)$. Now we argue the two cases separately. If this domain includes the algebraic solution given by \cite{Murata}, then we have a contradiction as above for any $\alpha \notin \m Z$. If this domain does not include the algebraic solution of $X_{II}(n)$, then we obtain a contradiction by obtaining a map (via specializing $\alpha$) from $X_{II}(n)$ to $X_{II}(m)$ for some other $m \in \m Z$ which is surjective and whose domain does not include the algebraic solution of $X_{II}(n)$. The range does include an algebraic solution, and the map is defined over $\m Q (t)$, a contradiction. 
\end{proof} 

\begin{prop} \label{kernel1} Let $a \in \m Q^{alg}$ and let $\alpha$ be transcendental.  Then $X_{II} (a)$ is orthogonal to $X_{II} (\alpha)$. 
\end{prop}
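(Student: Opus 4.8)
The plan is to reduce to the strongly minimal part of $X_{II}(a)$ and then run the specialization argument used in the proof of Lemma \ref{kernel}. First I would record that nonorthogonality of strongly minimal sets is witnessed by a generically finite-to-finite correspondence, which makes generic points interalgebraic and therefore preserves the order (the transcendence degree of a generic point) of the sets involved. Since $X_{II}(\alpha)$ has order two, any order-zero or order-one piece of $X_{II}(a)$ is automatically orthogonal to it. Consequently the cases $a \in \m Z$ and $a \in \frac{1}{2}+\m Z$ follow from Lemma \ref{kernel}: when $a \in \m Z$ the whole set $X_{II}(a) = X_{II}(n)$ is covered directly, and when $a \in \frac{1}{2}+\m Z$ the only component of $X_{II}(a)$ that can be nonorthogonal to $X_{II}(\alpha)$ is the order-two strongly minimal set $X_{II}(a)\setminus R(a)$ obtained by discarding the Riccati subvariety, which the lemma handles.

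This leaves $a \in \m Q^{alg}\setminus(\m Z \cup (\frac{1}{2}+\m Z))$. For such $a$, the rank and degree computations of this section together with \cite{Murata} show that $X_{II}(a)$ is strongly minimal with no $\m C(t)^{alg}$-points (no algebraic solution since $a \notin \m Z$, and no order-one subvariety since $a \notin \frac{1}{2}+\m Z$). Suppose toward a contradiction that $X_{II}(a) \not\perp X_{II}(\alpha)$. As $X_{II}(\alpha)$ is geometrically trivial and strongly minimal, so is $X_{II}(a)$, triviality being an invariant of nonorthogonality among strongly minimal sets. Applying Lemma \ref{GST} over $K(\alpha)$, where $K := \m Q(a)(t)$ is a finite extension of $\m Q(t)$ contained in $\m C(t)$, then produces a $K(\alpha)$-definable finite-to-finite correspondence; writing its parameters as functions of $\alpha$ over $K$ yields a $K$-formula $\psi(\alpha,x,y)$. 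Exactly as in Lemma \ref{kernel}, since $X_{II}(\alpha)$ is strictly disintegrated and has no $K^{alg}$-points, this correspondence is forced to be a finite-to-one surjective map $X_{II}(a) \to X_{II}(\alpha)$.

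I would then specialize the parameter. The set of constants $c$ for which $\psi(c,x,y)$ defines a finite-to-one surjection from $X_{II}(a)$ onto $X_{II}(c)$ is definable over $K$, hence over $\m Q(a)$; by strong minimality of the constants and transcendence of $\alpha$ it is cofinite, so it contains some $n \in \m Z$. Specializing $\alpha$ to $n$ gives a $K$-definable finite-to-one surjection $f : X_{II}(a) \to X_{II}(n)$. By \cite{Murata} the integer fiber $X_{II}(n)$ contains a solution $y_0 \in \m C(t)^{alg}$, and since $f$ is surjective and defined over $K \subseteq \m C(t)^{alg}$, the fiber $f^{-1}(y_0)$ is a nonempty finite set definable over $K(y_0) \subseteq \m C(t)^{alg}$. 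Its elements are algebraic solutions of $X_{II}(a)$, contradicting \cite{Murata} because $a \notin \m Z$. This contradiction establishes orthogonality in the remaining case.

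The main obstacle is the passage in the second paragraph from an abstract instance of nonorthogonality to a concrete $K$-definable finite-to-one \emph{map}, uniform in the parameter $\alpha$ and defined over a field inside $\m C(t)^{alg}$. This is precisely what lets the final specialization transport the algebraic solution guaranteed by \cite{Murata} on an integer fiber back to an algebraic solution of $X_{II}(a)$; it relies essentially on the strict disintegration of $X_{II}(\alpha)$ and on its having no algebraic points, and it is the step where the argument uses more than the bare equivalence-relation formalism for orthogonality. The reduction to the strongly minimal part via order, and the Murata contradiction itself, are then routine.
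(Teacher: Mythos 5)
Your proposal is correct and follows essentially the same route as the paper: reduce to $a \in \m Q^{alg}\setminus(\m Z \cup (\frac{1}{2}+\m Z))$ via Lemma \ref{kernel}, rerun the Lemma \ref{kernel} specialization argument to produce a finite-to-one surjection from $X_{II}(a)$ onto an integer fiber $X_{II}(n)$ defined over a subfield of $\m C(t)^{alg}$, and contradict Murata's classification by pulling back the algebraic solution. Your explicit disposal of the Riccati component $R(a)$ by comparing orders is a detail the paper leaves implicit, but the argument is the same.
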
 
\begin{proof} 
Assume that $a \notin \frac{1}{2} + \m Z$ and $a \notin \m Z$, since these cases are covered by Lemma \ref{kernel}. By the same argument of Lemma \ref{kernel}, we obtain a $\m Q(a,b,t)$-definable finite-to-one surjective map from $X_{II} (a)$ to $X_{II} (b)$ for some $b \in \m Z$. Now using Murata's theorem, we obtain a contradiction, since there is an element of $\m C (t) ^{alg}$ in $ X_{II} (b)$ but not $X_{II} (a)$.
\end{proof}


From the previous two lemmas, an affirmative answer to the following question seems plausible: 
\begin{ques} \label{ortho2} For any $\alpha, \beta \in \m C$, consider the following conditions:
\begin{itemize} 
\item $X_{II} ( \alpha )$ is nonorthogonal to $X_{II} (\beta)$
\item $\beta - \alpha \in \m Z$ or $\beta + \alpha \in \m Z$.
\end{itemize} 
Are the two conditions equivalent? 
\end{ques} 

We answer the question \emph{generically:} 

\begin{prop} \label{genone} Suppose that $\alpha, \beta \in \m C,$ with at least one of $\alpha, \beta $ transcendental. Then $X_{II} ( \alpha )$ is nonorthogonal to $X_{II} (\beta)$ if and only if $\beta - \alpha \in \m Z$ or $\beta + \alpha \in \m Z$. In the case that $\alpha, \beta $ are transcendental and $\beta - \alpha \in \m Z$ or $\beta + \alpha \in \m Z$, the only algebraic relations between solutions of $P_{II} ( \alpha)$ and $P_{II} ( \beta )$ are induced by the B\"acklund transformation $X_{II} ( \alpha ) \rightarrow X_{II} ( \beta )$. 
\end{prop}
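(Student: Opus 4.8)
The plan is to prove both directions, treating the easy ``if'' implication first and then reducing the ``only if'' implication to an arithmetic statement about an algebraic curve. For the ``if'' direction there is nothing new to do: as recorded in the discussion following Question~\ref{ortho2}, suitable compositions of the elementary B\"acklund transformations already exhibit $X_{II}(\alpha)$ as nonorthogonal to $X_{II}(\beta)$ whenever $\beta-\alpha\in\m Z$ or $\beta+\alpha\in\m Z$. So assume $X_{II}(\alpha)\not\perp X_{II}(\beta)$; I will show $\beta-\alpha\in\m Z$ or $\beta+\alpha\in\m Z$, and without loss of generality $\alpha$ is transcendental. For transcendental parameters the relevant solution sets are strongly minimal and geometrically trivial, so Lemma~\ref{GST} applies and the nonorthogonality is witnessed by a single formula $\phi(\alpha,\beta,x,y)$ over $\m Q(t)$ defining a surjective finite-to-finite correspondence between $X_{II}(\alpha)$ and $X_{II}(\beta)$.

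The key device, as in Lemma~\ref{kernel}, is to consider the locus $D=\{(a,b)\in\m C^2 : \phi(a,b,x,y) \text{ is a surjective finite-to-finite correspondence between } X_{II}(a) \text{ and } X_{II}(b)\}$, which is a constructible subset of $\m A^2$ defined over $\m Q^{alg}$ (the algebraic closure of the constants of $\m Q(t)$) and contains $(\alpha,\beta)$. I split into cases by the transcendence degree of $\m Q(\alpha,\beta)$ over $\m Q$. If $\beta\in\m Q^{alg}$, then $X_{II}(\alpha)\perp X_{II}(\beta)$ by Proposition~\ref{kernel1}, contradicting our assumption; here $\beta\mp\alpha$ is transcendental, so the biconditional holds vacuously. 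If $\alpha,\beta$ are algebraically independent over $\m Q$, then $(\alpha,\beta)$ is a generic point of $\m A^2$ over $\m Q^{alg}$, so $D$ contains a dense Zariski-open subset of $\m A^2$; hence for cofinitely many $a\in\m Z$ the fibre $\{b:(a,b)\in D\}$ is cofinite in $\m A^1$ and so contains a transcendental $b$. For such $(a,b)$ we obtain $X_{II}(a)\not\perp X_{II}(b)$ with $a\in\m Z$ and $b$ transcendental, contradicting Lemma~\ref{kernel}. Thus in this case $X_{II}(\alpha)\perp X_{II}(\beta)$, and again $\beta\mp\alpha\notin\m Z$.

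The remaining, essential, case is that $\beta$ is transcendental and algebraic over $\m Q(\alpha)$. Let $C$ be the $\m Q^{alg}$-irreducible affine plane curve that is the Zariski closure of $(\alpha,\beta)$ over $\m Q^{alg}$; it is defined over a number field, and since $D$ is constructible over $\m Q^{alg}$ and contains the generic point $(\alpha,\beta)$ of $C$, it contains all but finitely many points of $C$. The projection of $C$ to the first coordinate is dominant (it is not a vertical line, as $\alpha$ is transcendental), so cofinitely many integers $a$ occur as first coordinates of points of $C\cap D$. For such $(a,b)$ with $a\in\m Z$, the correspondence $\phi(a,b,x,y)$ is defined over $\m C(t)^{alg}$ and $X_{II}(a)$ has an algebraic solution by \cite{Murata}; arguing as in the last paragraph of the proof of Lemma~\ref{kernel} to handle whether the algebraic solution lies in the domain, this forces $X_{II}(b)$ to have an algebraic solution, whence $b\in\m Z$ by \cite{Murata}. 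The symmetric argument, using surjectivity onto $X_{II}(a)$, gives the converse, so cofinitely many points $(a,b)\in C$ satisfy $a\in\m Z$ if and only if $b\in\m Z$. In particular $C$ carries infinitely many integral points, so by Siegel's theorem $C$ has genus zero with at most two points at infinity; a direct analysis of the integral points on such a curve, combined with the equivalence $a\in\m Z\iff b\in\m Z$, forces $C$ to be a line of slope $\pm1$ with integer intercept, i.e. $\beta=\pm\alpha+n$ for some $n\in\m Z$, which is exactly $\beta\mp\alpha\in\m Z$.

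For the final assertion, suppose $\alpha,\beta$ are transcendental with $\beta\in\pm\alpha+\m Z$, and let $y\in X_{II}(\alpha)$, $z\in X_{II}(\beta)$ be interalgebraic over $\m C(t)$. Since $X_{II}(\alpha)$ is strongly minimal and strictly disintegrated, every finite-to-finite $\m C(t)^{alg}$-correspondence between $X_{II}(\alpha)$ and $X_{II}(\beta)$ is the graph of a definable bijection, and for transcendental $\alpha$ the group of such bijections is generated by the B\"acklund transformations; as these carry $(\alpha,\beta)$ through the same chain $X_{II}(\alpha)\to X_{II}(\beta)$, the relation between $y$ and $z$ is the one induced by the corresponding B\"acklund transformation. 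I expect the main obstacle to be the arithmetic rigidity step of the third paragraph: extracting, from Siegel's theorem and the integrality equivalence $a\in\m Z\iff b\in\m Z$, that $C$ must be a line of slope $\pm1$ with integer intercept. The remaining steps are routine adaptations of Lemma~\ref{kernel} and standard geometric stability theory.
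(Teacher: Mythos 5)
Your overall architecture matches the paper's: reduce to the case of two transcendental parameters via Proposition \ref{kernel1}, use Lemma \ref{GST} to obtain a single $\m Q(t)$-formula witnessing the nonorthogonality uniformly in the parameters, dispose of the algebraically independent case by specializing to a point with one integer and one non-integer coordinate, and reduce the interalgebraic case to an arithmetic rigidity statement: a $\m Q^{alg}$-irreducible plane curve whose fibres over cofinitely many integers (in either coordinate) consist entirely of integers must be a line $y=\pm x + c$ with $c\in\m Z$. The problem is that you do not prove this last statement; you invoke Siegel's theorem to get genus zero with at most two points at infinity and then appeal to ``a direct analysis,'' which you yourself flag as the main obstacle. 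That analysis is exactly where the paper does its real work: it uses the Hilbert Irreducibility Theorem to show that $p(\alpha_1,y)$ is irreducible with nonvanishing leading coefficient for all but finitely many integers $\alpha_1$, which together with the all-roots-integral property forces $p$ to be linear in $y$ (an irreducible polynomial over $\m Q$ with a rational root is linear), and symmetrically linear in $x$; it then excludes the cross term $xy$ and pins the slope to $\pm 1$ and the intercept to $\m Z$ by a derivative estimate (if $|dy/dx|$ or $|dx/dy|$ eventually lies in $(0,1)$ or $(-1,0)$, the integrality equivalence fails between consecutive integers). Note that Siegel's theorem alone does not suffice: genus-zero curves with two points at infinity (Pell conics, hyperbolas $xy=N$) can carry infinitely many integral points, and ruling them out still requires the ``all points above a cofinite set of integers are integral'' input, at which point Hilbert irreducibility is doing the work anyway. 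The paper relegates the Siegel route to a footnote precisely because it is an alternative heuristic, not a complete substitute. So this is a genuine missing idea rather than a routine verification.

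A smaller issue: in the final paragraph you assert that ``the group of such bijections is generated by the B\"acklund transformations,'' which you have not established (and the collection of bijections $X_{II}(\alpha)\to X_{II}(\beta)$ is not a group). The paper's argument is more direct: compose any putative algebraic relation with the inverse of the B\"acklund transformation to obtain a nontrivial algebraic relation between two solutions of $P_{II}(\alpha)$, contradicting strict disintegratedness. Your intended conclusion is correct but should be routed through that composition.
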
 
\begin{proof} By Proposition \ref{kernel1} we need only consider the case that both $ \alpha $ and $\beta$ are transcendental. If $\alpha$ and $\beta$ are interalgebraic by some polynomial $p(x,y) \in \m Q (x,y)$ (which we may assume to be irreducible), we can assume for cofinitely many $\alpha_1 \in \m C$, for any $\beta _1$ such that $p(\alpha_1, \beta _1)=0$, we have that $X_{II} ( \alpha_1)$ and $X_{II} (\beta_1)$ are nonorthogonal by a particular fixed formula $\phi (x,y, \alpha_1, \beta _1)$ which can be assumed be a $\m Q (t, \alpha_1, \beta _1)$ formula, by Lemma \ref{GST}. In particular, for all but finitely many values $\alpha_1 \in \m Z$, we have that $X_{II} ( \alpha_1)$ and $X_{II} (\beta_1)$ are in finite-to-finite correspondence, by $\phi (x,y, \alpha_1, \beta _1)$; in fact, since we know that $X_{II}$ is strictly disintegrated for $\alpha_1$ generic \cite{nagloo2014algebraic}, it must be that $\phi (x,y, \alpha_1, \beta _1)$ defines a bijection between $X_{II} ( \alpha_1)$ and $X_{II} (\beta_1)$ for all but finitely many $\alpha_1$. 

In particular, this holds for cofinitely many $\alpha_1 \in \m Z$. For $\alpha_1 \in \m Z$, it must be that $\beta_1 \in \m Z$ (again, this uses Murata's analysis of algebraic solutions \cite{Murata}). Further, for any integer $\alpha_1,$ the collection of $\beta \in \m C$ such that $p ( \alpha _1 , \beta )=0$ must consist entirely of integers $\beta$. Similarly, for any integer $\beta_1,$ the collection of $\alpha \in \m C$ such that $p ( \alpha  , \beta_1 )=0$ must consist entirely of integers $\alpha$. By the Hilbert Irreducibility theorem, \cite{lang1983fundamentals} Chapter 9, for all but finitely many integers $\alpha_1$, the polynomial $p(\alpha_1 , y)$ is irreducible over $\m Q$. Consider the coefficient, $p_1(x) \in \m Q [x]$, of the highest degree term of $p(x,y)$ considered as a polynomial in $y$. For all but finitely many integers $\alpha_1$, $p_1 ( \alpha_1) \neq 0$. But, now there must be an integer $\alpha_1$ such that $p(\alpha_1, y)$ has only integer roots and $p ( \alpha _1 , y)$ is irreducible and the leading degree monomial of $p(x,y)$ in $y$ does not vanish at $x = \alpha_1$. This is impossible unless $p (x,y)$ is linear in $y$. By a symmetric argument, $p(x,y)$ is also linear in $x$. We claim that $p(x,y)$ is (up to multiplication by a scalar) of the form $y-x+c$ or $y+x+c$ for some $c \in \m Z$, in which case, we have established the proposition in the case that $\alpha$, $\beta$ are algebraically dependent.

To see that $p(x,y)$ is of one of the two specified forms, we need only prove that there is no monomial term of the form $xy$ in $p(x,y)$. Should such a term appear, then $p(x,y) = xy + ax+by + c$ (up to multiplication by a constant) - by irreducibility, one can assume that at least one of $b,c,$ are nonzero. Then $y= -\frac{ax+c}{x+b}$ and so $\frac{dy}{dx}$ is nonconstant. Thus, either $\frac{dy}{dx}$ or $\frac{dx}{dy}$ is bounded in absolute value by $1$ for sufficiently large $x$ or $y$, respectively. Either case produces a contradiction, since for all sufficiently large integers, $x$ (or $y$) the polynomial $p(x,y)$ has only integer solutions in $y$ (in $x$). 

If $\alpha$, $\beta$ are transcendental and independent over $\m Q$, then by similar arguments as above, we have some formula $\phi (x,y, \alpha, \beta )$ which is a bijection between $X_{II} ( \alpha)$ and  $X_{II} (\beta)$ on some $\m Q$-definable subset of $\m C^2.$ But, since this definable set includes the generic point $(\alpha, \beta)$, the set must include a Zariski open subset of $\m C^2$. Any such subset includes a pair of points $(\alpha_1, \beta_1)$ so that $\alpha _1 \in \m Z$ and $\beta _1 \notin \m Z$, a contradiction by Murata's classification of algebraic solutions \cite{Murata}. To see the final statement of the Proposition, note that if there were some algebraic relation not induced by the B\"acklund transformation, we would obtain (by composing with the inverse of the B\"acklund transformation) an algebraic relation between two solutions of $P_{II} ( \alpha), $ which contradicts the fact that the equation is strictly disintegrated.
\end{proof} 

There are likely many methods of showing that $p(x,y)$ must have a very restrictive form\footnote{For instance the curve $C$, which we can assume to be nonsingular, defined by $p(x,y)$ has infinitely many integral points, so by Siegel's theorem \cite{hindry2013diophantine}, Part D, it must be genus zero with fewer than three points at infinity in its projective closure.} in the previous proof besides applying the Hilbert Irreducibility theorem and our arguments. One can show that $p(x,y)$ has the appropriate form via an elementary argument using o-minimality of the field of real numbers - essentially we will repeatedly be using the fact that in an o-minimal structure, a definable property holds for all sufficiently large $x$ if it holds for arbitrarily large $x$ (for a reference on o-minimality, see \cite{van1998tame}). Since $p$ has coefficients in $\m Q$, the map $x \rightarrow (y_1(x), \ldots , y_n(x))$ where $p(x,y_i(x))=0$ is a map which is interpretable in the real numbers considered as an ordered field. For sufficiently large integers, $x,$ all of the roots of $p(x,y)=0$ are integers and for sufficiently large $y,$ all of the roots of $p(x,y)=0$ are integers. By o-minimality the same is true for sufficiently large real-valued $x$, the roots of $p(x,y)$ are real-valued (and likewise for large $y$).

For sufficiently large $x$ (or $y$), the number of distinct (real) roots of $p(x,y)=0$ in $y$ (resp. in $x$) is some fixed number, and letting $n$ be the number of such roots, each of the maps $x \mapsto y_i$ where $y_1 < y_2 \ldots < y_n$ is a definable map. For sufficiently large $x$, we may assume, each of these maps is nonconstant and differentiable with differentiable inverse. Now, if $\frac{dy_i}{dx}$ is identically $\pm 1$, then the map must be of the form $y = x +c$ or $y= -x +c$, and so the linear polynomial $y -x -c$ or $y +x -c$ is a factor of $p(x,y)$ and it must be the case that $c \in \m Z$. 

If not, then for sufficiently large values of $x$, it must be that for some $i$, $\frac{dy_i}{dx} \in I$ where $I$ is one of the intervals $(1, \infty)$, $(0,1)$, $(-1,0)$, $(-\infty , -1)$. In the case that $I=(0,1)$, selecting some sufficiently large $\alpha_1 \in \m Z$, it must be that $y_i (\alpha_1 +1)$ is not is $\m Z$, since $y_i ( \alpha_1) \in \m Z$ and the derivative of $y_i$ on the interval $[\alpha_1, \alpha_1+1]$ is in $(0,1)$. A similar argument applies when $I = (-1,0)$. When $I$ is one of the unbounded intervals, then for sufficiently large values of $y_i$, $\frac{dx}{dy_i}$ is one of the bounded intervals, $(0,1)$, $(-1,0)$, and the same argument applies with the roles of $x$ and $y$ exchanged. It follows that $p(x,y)$ must be of the form $y-x+c$ or $y+x+c$ for some $c \in \m Z$.

\section{Painlev\'e three} \label{twotwo} 
The third Painlev\'e family of equations is given by \begin{equation} \frac{d^2y}{dt^2} = \frac{1}{y} \left( \frac{dy}{dt} \right) ^2- \frac{1}{t} \frac{dy}{dt} + \frac{\alpha y^2 + \beta }{t}+ \gamma y^3+ \frac{\delta}{y}.  \label{P311} \end{equation} For purposes of this section, we will work with an alternate Hamiltonian form of the equations which can be obtained from equation (\ref{P311}) in the so-called non-degenerate case that $\gamma \delta \neq 0.$ The existing work on $P_{III}$ from the model theoretic perspective has been entirely in this non-degenerate case, and in this section we restrict ourselves to this case. The degenerate case will be considered in Section \ref{DegeneP3}.

The differential algebraic information in this section comes mainly from \cite{umemura1998solutions}, whose notation we also follow. By \cite{okamoto1987studies}, assuming $\gamma \delta \neq 0,$ equation (\ref{P311}) can be transformed to the following system in Hamiltonian form, which we denote by $S_{III}( \bar v)$:
\begin{align}
\label{P3D6}
\tag{$S_{III}$} 
\begin{split}
t q ' & =   2 q^2 p -q^2 -v_1 q +t  \\ 
t p' &=  -2 qp^2 + 2 qp -v_1 p + \frac{1}{2} (v_1 +v_2)
\end{split}
\end{align} 
The series of transformations producing $S_{III}(\bar v)$ is also given in some detail in the introduction of \cite{umemura1998solutions}. Note that $\bar v = (v_1, v_2) \in \m C^2$. We denote, by $X_{III} ( v_1, v_2)$, the solution set of $S_{III}( v_1, v_2)$.

Define \begin{align*} W_1 = \left\{ \bar v \in \m C^2 \, | \, v_1 -v_2 \in 2 \m Z \right\} \end{align*}  and \begin{align*} D_1 = \left\{ \bar v \in \m Z^2 \, | \, v_1 +v_2 \in 2 \m Z \right\}.\end{align*}  
Theorem 1.2 (iii) \cite{umemura1998solutions} implies that for $\bar v$ not in $W_1$ or $D_1$, $X_{III}(\bar v)$ is strongly minimal. For $\bar v \in W_1$, Lemma 3.1 of \cite{umemura1998solutions} implies that $X_{III}( \bar v)$ has Morley rank one and Morley degree two.  For $\bar v \in D_1$, Lemma 3.2 of \cite{umemura1998solutions} implies that $X_{III}( \bar v)$ has Morley rank one and Morley degree three. To see these latter two facts from the statements of Lemmas 3.1 and 3.2, we describe the group of B\"acklund transformations acting on $X_{III}$. 

Consider the group $G$ of affine transformations of $\m C^2$ generated by:\begin{align*}
     s_1 (\bar v)& = (v_2, v_1) \\
     s_2 (\bar v)& = (-v_2 , -v_1)\\
     s_3 (\bar v )& = (v_2+1, v_1-1)\\
     s_4 ( \bar v )& = (-v_2+1, -v_1+1)
     \end{align*}
     
     Then for any $g \in G$, we have the following commutative diagram: 
     
     \[ \begin{CD}
X_{III} @>g_*>> X_{III}\\
@VV \pi V @VV \pi V\\
\m C^2 @> g  >> \m C^2
\end{CD}\] 

Where $g_*$ is a rational bijective map (between the differential varieties $X_{III} (v_1, v_2)$ and $X_{III} (g(v_1, v_2))$). We now state a result of Murata \cite{murata1995classical} for convenience and because we are following the notation of \cite{umemura1998solutions} which differs slightly from the notation of \cite{murata1995classical}. 

\begin{thm} \label{Murata3} \cite{murata1995classical}, Section 1.3. The equation $S_{III} (v_1, v_2)$ has algebraic solutions if and only if $v_2-v_1-1 \in 2 \m Z$ or $v_2+v_1 +1 \in 2 \m Z$. If there are algebraic solutions to $S_{III} (v_1, v_2) $, then there are either two or four. There are four algebraic solutions precisely when both $v_2-v_1-1 \in 2 \m Z$ and $v_2+v_1 +1 \in 2 \m Z$.
\end{thm}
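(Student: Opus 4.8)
This is classical, due to Murata; I would reprove it by combining the B\"acklund transformations of $S_{III}$ with a descent argument. The first observation is that each B\"acklund transformation is birational in $(q,p)$ with coefficients in $\m Q(t)$, hence carries algebraic solutions to algebraic solutions and induces an action on the set $A \subseteq \m C^2$ of parameters that admit an algebraic solution. Writing $u = v_1 - v_2$ and $w = v_1 + v_2$, the generators act by $s_1 : u \mapsto -u$ and $s_3 : u \mapsto 2 - u$ (both fixing $w$) and $s_2 : w \mapsto -w$ and $s_4 : w \mapsto 2 - w$ (both fixing $u$); thus $\langle s_1, s_2, s_3, s_4 \rangle$ is the product of two infinite dihedral groups, one reflecting the $u$-line at every integer and one reflecting the $w$-line at every integer, with translation subgroups acting by $2\m Z$ in each coordinate. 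Since the transformations preserve algebraicity, $A$ is a union of group orbits; a direct check shows that the locus $L$ defined by ``$v_2 - v_1 - 1 \in 2\m Z$ or $v_2 + v_1 + 1 \in 2\m Z$'', equivalently ``$u$ odd or $w$ odd'', is invariant under all four generators and so is also a union of orbits. The theorem is the assertion $A = L$.

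For the ``if'' direction ($L \subseteq A$) and the count, I would exhibit by an explicit algebraic ansatz a family of seed solutions covering the full walls $u = 1$ and $w = 1$ (parametrized by the free coordinate). Because the translation subgroups shift $u$ and $w$ by even integers, the group orbit of the wall $u = 1$ is exactly $\{u \text{ odd}\}$ and that of $w = 1$ is exactly $\{w \text{ odd}\}$, so transporting the seeds by B\"acklund transformations yields an algebraic solution at every parameter with $v_2 - v_1 - 1 \in 2\m Z$ or $v_2 + v_1 + 1 \in 2\m Z$. The number of solutions then reflects how the stabilizer of the parameter acts on the seeds: on a single odd wall (say $u$ odd, $w \notin \m Z$) the wall reflection fixes the parameter but interchanges a pair of seed solutions, giving two, whereas when both $u$ and $w$ are odd --- which forces $v_1 = \tfrac{1}{2}(u + w)$ and $v_2 = \tfrac{1}{2}(w - u)$ into $\m Z$ --- seeds from both families are present and one obtains four.

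For the ``only if'' direction ($A \subseteq L$) I would run the descent in reverse: attach to an algebraic solution a complexity (for instance the degree of $\m C(t)(q,p)$ over $\m C(t)$ together with a count of branch points) and show that, unless the parameter already lies on an odd wall or at a classified minimal seed, some B\"acklund transformation strictly decreases it. Iterating reduces an arbitrary algebraic solution to a finite list of minimal seeds, all occurring at parameters with $u$ or $w$ an odd integer; invariance of $A$ then forces any parameter admitting an algebraic solution into $L$. It is worth noting that a purely model-theoretic shortcut is unavailable here: the condition ``$u$ odd'' already holds at parameters outside $W_1 \cup D_1$, where $S_{III}(\bar v)$ is strongly minimal by Theorem 1.2 (iii) of \cite{umemura1998solutions}, and yet these fibers carry finitely many isolated algebraic solutions --- exactly as $X_{II}(n)$ for $n \in \m Z$ carries a single algebraic solution \cite{Murata} --- so strong minimality cannot by itself exclude algebraic solutions.

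The main obstacle is this ``only if'' direction together with the exact count. The delicate points are choosing a complexity that provably decreases under the appropriate B\"acklund transformation (so that the descent terminates) and then carrying out the local Puiseux-series analysis at the fixed singular points $t = 0, \infty$ that classifies the minimal seeds and confirms that none hides at a parameter off the odd walls; separating the two-solution case from the four-solution case likewise requires pinning down the stabilizers of the walls and corners of a fundamental domain and the action of the sign symmetries $s_1, s_2$ on the seed families.
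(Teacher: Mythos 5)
This statement is not proved in the paper at all: it is quoted verbatim (in the notation of \cite{umemura1998solutions}) from Murata \cite{murata1995classical}, Section 1.3, and is used as a black box in the orthogonality arguments. So there is no ``paper proof'' to compare against; the relevant question is whether your reconstruction would stand on its own.

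Your setup is correct and matches the paper's data: with $u = v_1 - v_2$ and $w = v_1 + v_2$ the generators $s_1, s_2, s_3, s_4$ listed in Section \ref{twotwo} do act as the two infinite dihedral groups you describe, the locus ``$u$ odd or $w$ odd'' is visibly invariant, and the observation that both $u$ and $w$ odd forces $v_1, v_2 \in \m Z$ is right. Your remark that strong minimality cannot rule out algebraic solutions here is also accurate and is exactly the point the paper exploits (e.g.\ in Case Ib of the $X_{II}$ versus $X_{III}$ proposition). The genuine gap is that everything load-bearing is deferred: you do not exhibit the seed solutions on the walls $u=1$ and $w=1$, you do not specify a complexity function that provably decreases under some B\"acklund transformation away from the walls, and you do not carry out the Puiseux analysis at $t=0,\infty$ that classifies the minimal seeds and yields the exact counts two versus four. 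These are precisely the nontrivial parts of Murata's argument, and as written the ``only if'' direction and the count are assertions, not proofs. For the purposes of this paper a citation suffices, and I would recommend citing \cite{murata1995classical} rather than attempting the reproof; if you do want a self-contained argument, the descent skeleton you propose is the right one, but each of the three deferred steps needs to be supplied in full.
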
 

\begin{prop} \label{orthofam3} For any $\alpha \in \m  C $, and any $\beta, \gamma \in \m C$ which are independent and transcendental over $\m Q$, $X_{II}(\alpha ) $ is orthogonal to $X_{III} ( \beta , \gamma)$. 
\end{prop} 
\begin{proof} First, let $\alpha \in \m Q^{alg} \setminus (\m Z \cup (\frac{1}{2} + \m Z))$. By Lemma \ref{GST}, the triviality and strong minimality of $X_{III} ( \beta , \gamma)$ \cite{nagloo2017algebraic} and $X_{II} (\alpha )$, and the fact that $X_{III} ( \beta , \gamma)$ has no algebraic solutions \cite{umemura1998solutions} implies that there must be a $\m Q(t, \alpha, \beta , \gamma)$-definable finite-to-one map from $X_{II}(\alpha )$ to $X_{III} ( \beta , \gamma)$. By the genericity of $\beta, \gamma$, there is such a map on a Zariski open subset of $(\beta_1, \gamma_1 ) \in \m C^2$. Such an open subset includes some points with $\gamma_1 - \beta_1 -1 \in 2 \m Z$, for which $X_{III} ( \beta_1 , \gamma_1)$ has an algebraic solution by Theorem \ref{Murata3}, but $X_{II} ( \alpha)$ has no algebraic solution, a contradiction. The same argument applies to $X_{II}(\alpha ) \setminus R(\alpha )$ when $ \alpha \in \frac{1}{2} + \m Z$. The set $R(\alpha)$ is order one and strongly minimal, and nonorthogonality refines algebraic dimension for strongly minimal sets, so $R( \alpha)$ is orthogonal to $X_{III} ( \beta , \gamma)$. 

Now, suppose that $\alpha \in \m Z$. Then just as above, we have a $\m Q(t, \beta , \gamma)$-definable map from $X_{II}(\alpha )$ to $X_{III} ( \beta , \gamma)$, and the domain of the map could either be all of $X_{II}(\alpha )$ or $X_{II}(\alpha )$ minus the algebraic solution. In the first case, we have a contradiction since $X_{III} ( \beta , \gamma)$ has no algebraic solutions. In the second case, we obtain a contradiction by specializing $\beta, \gamma$ to values $\beta_1 , \gamma_1$ such that $\gamma - \beta -1 \in 2 \m Z$. 

Finally, Proposition 5.4 of \cite{nagloo2015transformations} covers the case in which $\alpha$ is transcendental. 
\end{proof} 

The previous argument shows that simple specialization arguments yield the orthogonality some Painlev\'e equation with generic coefficients with a Painlev\'e equation having algebraic coefficients. Similar arguments can be easily adapted for all pairs of equations from all of the families. We will not make similar precise statements for other pairs of equations. Even the condition of genericity can generally be relaxed as long as at least one of the coefficients is transcendental: 

\begin{prop} Suppose that the transcendence degree of $(\beta, \gamma )$ is one and $\alpha \in \m Q^{alg}$. If $\alpha \notin \frac{1}{2} + \m Z$, then $X_{II}(\alpha ) $ is orthogonal to $X_{III} ( \beta , \gamma)$. If $\alpha \in \frac{1}{2}+ \m Z$, $X_{II}(\alpha ) \setminus R(\alpha )$ is orthogonal to $X_{III} ( \beta , \gamma)$.
\end{prop}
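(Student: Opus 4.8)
The plan is to follow the template of Proposition \ref{orthofam3}, replacing the two-dimensional parameter variety by the curve on which $(\beta,\gamma)$ lives. Write $Z$ for $X_{II}(\alpha)$ when $\alpha\notin\frac{1}{2}+\m Z$ and for $X_{II}(\alpha)\setminus R(\alpha)$ when $\alpha\in\frac{1}{2}+\m Z$; in either case $Z$ is strongly minimal, order two, and geometrically trivial, and it has an algebraic solution precisely when $\alpha\in\m Z$, in which case there is a unique one by \cite{Murata}. Assuming for contradiction that $Z\not\perp X_{III}(\beta,\gamma)$, I would first note that $X_{III}(\beta,\gamma)$ is strongly minimal whenever $(\beta,\gamma)\notin W_1\cup D_1$ and geometrically trivial by \cite{nagloo2011algebraic}, so Lemma \ref{GST} produces a $\m Q(t,\alpha,\beta,\gamma)$-definable finite-to-finite correspondence $\Gamma$ between $Z$ and $X_{III}(\beta,\gamma)$. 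Since $\text{td}(\beta,\gamma)=1$, the pair $(\beta,\gamma)$ is the generic point of an irreducible affine curve $C$ defined over $\m Q^{alg}$; that $\Gamma$ is a finite-to-finite correspondence with surjective projections is a constructible condition on $(v_1,v_2)\in C$ (by stable embeddedness of the constants) that holds at the generic point, hence on a cofinite subset of $C$.

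The main case is when $C$ is not contained in any of the lines $\{v_2-v_1-1\in 2\m Z\}$, $\{v_2+v_1+1\in 2\m Z\}$, or $\{v_1-v_2\in 2\m Z\}$. Then at the generic point of $C$ the set $X_{III}(\beta,\gamma)$ is strongly minimal with no algebraic solutions, so, exactly as in Lemma \ref{kernel}, the projection of $\Gamma$ to $X_{III}(\beta,\gamma)$ is a finite-to-one surjection: its image is cofinite and its complement, being definable over $\m Q(t,\alpha,\beta,\gamma)\subseteq\m C(t)^{alg}$, would consist of algebraic solutions, of which there are none. Because $C$ is a curve, at least one of $v_2-v_1$ and $v_2+v_1$ is nonconstant on $C$ and so attains infinitely many odd integer values; I can therefore specialize $(\beta,\gamma)$ to a point $(\beta_1,\gamma_1)\in C$ with $\gamma_1-\beta_1-1\in 2\m Z$ (or $\gamma_1+\beta_1+1\in 2\m Z$) at which the surjection persists. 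By Theorem \ref{Murata3}, $X_{III}(\beta_1,\gamma_1)$ then has an algebraic solution $y_0\in\m C(t)^{alg}$; its preimage under the specialized map is finite, nonempty, and definable over $\m C(t)^{alg}$, hence an algebraic solution of $Z$, which is impossible when $\alpha\notin\m Z$. When $\alpha\in\m Z$ the same computation applies after deleting the unique algebraic solution of $Z$ from the domain, exactly as in Proposition \ref{orthofam3}: the preimage of $y_0$ would be an algebraic solution of $Z$ distinct from the deleted one, contradicting uniqueness. When $\alpha\in\frac{1}{2}+\m Z$, the deleted order-one set $R(\alpha)$ is itself orthogonal to $X_{III}(\beta,\gamma)$ because nonorthogonality refines algebraic dimension for strongly minimal sets and $R(\alpha)$ has order one.

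The real difficulty is the degenerate case in which $C$ lies inside one of the exceptional lines. If $C\subseteq\{v_1-v_2\in 2\m Z\}$ then $(\beta,\gamma)\in W_1$ and $X_{III}(\beta,\gamma)$ has Morley degree two; here I would split it into its order-one Riccati subvariety, orthogonal to $Z$ by the order argument above, and its strongly minimal part, to which a specialization along $C$ (landing at the points $v_2\in\frac{1}{2}+\m Z$ where algebraic solutions appear) should again apply, provided one checks that the algebraic solutions produced lie in the strongly minimal part and not in the Riccati subvariety. The genuinely hard subcase is $C\subseteq\{v_2-v_1-1\in 2\m Z\}$ (and its mirror $\{v_2+v_1+1\in 2\m Z\}$): now $X_{III}(\beta,\gamma)$ is strongly minimal but has algebraic solutions at \emph{every} point of $C$, so a correspondence simply omits them and the algebraic-solution count contradiction evaporates, since a bijection between $Z$ and $X_{III}(\beta,\gamma)$ with its two algebraic solutions removed is consistent with every invariant used above. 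I expect this to be the crux of the argument, and the most promising route around it is to exploit the jump in the number of algebraic solutions of $X_{III}$ from two to four at the integer points of $C$ (where the second algebraic-solution condition also holds), together with the fact that $Z$ has at most one algebraic solution, to force an incompatibility in the number of points omitted by the specialized correspondence.
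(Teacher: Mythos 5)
Your proposal follows the paper's proof in all essentials: reduce via Lemma \ref{GST} to a $\m Q(t)^{alg}$-definable finite-to-finite correspondence, view $(\beta,\gamma)$ as the generic point of a curve $C$, observe that the relevant surjectivity/counting properties of the correspondence are first-order in the parameters and so persist on a cofinite subset of $C$, and then specialize to points of $C$ where Murata's classification (Theorem \ref{Murata3}) forces algebraic solutions that would pull back to forbidden algebraic solutions of $X_{II}(\alpha)$. The subcase you flag as the ``genuinely hard'' one --- $C$ contained in $\{v_2-v_1-1\in 2\m Z\}$ or its mirror, where every fiber over $C$ already has two algebraic solutions --- is exactly the paper's Case Ib, and the resolution you sketch is precisely the paper's: since the source has no algebraic solutions, the range must omit exactly the two algebraic solutions, this count is a first-order property persisting on cofinitely many points of $C$, and at the infinitely many points of $C$ where additionally $v_1+v_2+1\in 2\m Z$ there are four algebraic solutions, so two of them land in the range and pull back to algebraic solutions of $X_{II}(\alpha)$, a contradiction. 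So what you leave as a plan is correct and its execution is routine; there is no missing idea. The one place where you genuinely diverge is $\alpha\in\m Z$: you delete the unique algebraic solution from the domain at the outset and run the same specialization, whereas the paper keeps it and must then analyze what the correspondence does to that point --- including the possibility that its fiber is an order-one subvariety of $X_{III}(\beta,\gamma)$, which the paper rules out with a separate unbounded-degree argument. Your restriction is legitimate (the deleted point is definable over the parameters, and restricting to a cofinite subset preserves nonorthogonality), and it sidesteps that extra case analysis, which is a mild simplification of the paper's Case III.
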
 

\begin{proof} 
If (the generic component of) $X_{III} ( \beta , \gamma)$ is not geometrically trivial, then it must be the case that $X_{II} ( \alpha )$ (or $X_{II}(\alpha ) \setminus R(\alpha )$ in case $\alpha \in \frac{1}{2}+ \m Z$) is orthogonal to $X_{III} ( \beta, \gamma)$. Thus, we can assume that $X_{III} ( \beta , \gamma)$ is geometrically trivial. Then by Lemma \ref{GST}, we have a $\m Q(t)$-formula $\phi (x,y, \alpha,  \beta ,\gamma)$ which defines a generically finite-finite correspondence between $X_{II} ( \alpha )$ and $X_{III} ( \beta , \gamma)$ (with the variable $x$ corresponding to $X_{II} ( \alpha )$ and the variable $y$ corresponding to $X_{III} ( \beta , \gamma)$).  

{\bf Case I:} Suppose that $\alpha \in \m Q^{alg} \setminus ( \m Z \cup (\frac{1}{2} + \m Z)).$ Then the domain of $\phi (x,y, \alpha,  \beta ,\gamma)$ must be all of $X_{II}(\alpha)$ since $X_{II} ( \alpha)$ has no $\m C(t)^{alg}$-points. 

{\bf Case Ia:} Suppose that $\beta +\gamma =n \in 2 \m Z$. Then the range of the correspondence in $X_{III} ( \beta , \gamma) $ must be all of $X_{III} ( \beta , \gamma) $ except for the exceptional subvariety given by \cite{umemura1998solutions} (described at the beginning of this section). These properties of $\phi (x,y, \alpha, \beta , \gamma)$ hold for cofinitely many points in the locus of $(\beta , \gamma) $ such that $ \beta + \gamma =n \in 2 \m Z$, but the set of $(\beta, \gamma)$ such that $ \beta + \gamma =n \in 2 \m Z$ contains infinitely many points such that $ \beta - \gamma +1 \in 2 \m Z$. This contradicts the fact that $X_{II} ( \alpha)$ has no algebraic solutions and $X_{III} (\beta , \gamma)$ has two algebraic solutions. The case that $\beta -\gamma =n \in 2 \m Z$ is analogous. 

When $\beta -\gamma  \notin \m Z$ and $\beta + \gamma  \notin \m Z$, $X_{III} ( \beta , \gamma)$ is strongly minimal. We have two additional cases when $X_{III} ( \beta , \gamma)$ is strongly minimal. 

{\bf Case Ib:} Suppose $\beta -\gamma +1=n  \in 2 \m Z$. Since $X_{II} ( \alpha ) $ has no algebraic solutions, we must have that the range of $\phi (x,y, \alpha,  \beta ,\gamma)$ is all of $X_{III} ( \beta , \gamma)$ except for two points. This property holds on all but finitely many $(\beta , \gamma)$ such that $\beta -\gamma +1  =n$. But, now among the collection of $(\beta , \gamma)$ such that $\beta -\gamma +1 =n$, there are infinitely many $(\beta , \gamma)$ such that $\beta +\gamma +1 \in 2 \m Z$. For such $( \beta , \gamma)$, $X_{III} ( \beta , \gamma)$ has four algebraic solutions. Thus, two of these solutions are in the range of $\phi (x,y, \alpha,  \beta ,\gamma)$, contradicting the fact that $X_{II} ( \alpha)$ has no algebraic solutions. The case that $\beta +\gamma +1 \in 2 \m Z$  is analogous. 

{\bf Case Ic:} Suppose $\beta$ and $\gamma$ satisfy some algebraic relation other than Case Ib. Then we must have that the range of $\phi (x,y, \alpha,  \beta ,\gamma)$ is all of $X_{III} ( \beta , \gamma)$, and this holds on some specialization of $(\beta, \gamma)$, $(\beta_1, \gamma_1)$ such that $\beta_1 +\gamma_1 +1 \in 2 \m Z$ or $\beta_1 -\gamma_1 +1 \in 2 \m Z$, which gives a contradiction, since for such $(\beta_1, \gamma_1)$, $X_{III} (\beta_1, \gamma_1)$ has two algebraic solutions and $X_{II} ( \alpha )$ has no algebraic solutions. 

{\bf Case II:} The case in which $\alpha \in \frac{1}{2} + \m Z$ can be argued identically to Case I. 

{\bf Case III:} Suppose that $\alpha \in \m Z$. As in Case I, we obtain $\phi (x,y, \alpha,  \beta ,\gamma)$, a generically finite-to-finite correspondence between $X_{II} ( \alpha )$ and $X_{III} ( \beta , \gamma)$. If the domain of $\phi (x,y, \alpha,  \beta ,\gamma)$ does not include the unique algebraic point in $X_{II} (\alpha)$, then the argument proceeds identically as in Case I. So, suppose that the domain of $\phi (x,y, \alpha,  \beta ,\gamma)$ is all of $X_{II} ( \alpha )$. Then the range of $\phi (x,y, \alpha,  \beta ,\gamma)$ must contain some algebraic point(s) or a subvariety, which means that we must have one of the following relations holding: 
\begin{enumerate} 
\item $\beta -\gamma +1=n \in 2 \m Z.$
\item $\beta +\gamma +1=n  \in 2 \m Z.$
\item $\beta -\gamma =n \in 2 \m Z.$
\item $\beta +\gamma =n  \in 2 \m Z.$
\end{enumerate} 
Case i) is similar to Case ii), and Case iii) is similar to Case iv). Thus, we will cover cases i) and iii). Assume that $\beta -\gamma +1=n  \in 2 \m Z. $ Then on a Zariski open subset of $(\beta , \gamma )$ such that $\beta -\gamma +1=n $, $\phi (x,y, \alpha,  \beta ,\gamma)$ is a finite-to-finite correspondence so that one of the two following cases holds: 
\begin{itemize} 
\item Let $a \in X_{II} ( \alpha)$ be the unique algebraic solution. The range of $\phi (x,y, \alpha,  \beta ,\gamma)$ is all of $X_{III} (x,y,\alpha, \beta, \gamma)$ and $$| \{y \, : \, y \in X_{III} ( \beta , \gamma) \, \text{ and } \phi (a,y, \alpha,  \beta ,\gamma) \} | =2.$$
\item Let $a \in X_II ( \alpha)$ be the unique algebraic solution. The range of $\phi (x,y, \alpha,  \beta ,\gamma)$ is all of $X_{III} (x,y,\alpha, \beta, \gamma)$  except for one point and $$| \{y \, : \, y \in X_{III} ( \beta , \gamma) \, \text{ and } \phi (a,y, \alpha,  \beta ,\gamma) \} | =1.$$
\end{itemize} 
In either of these cases, picking $(\beta, \gamma)$ so that $\phi (x,y, \alpha,  \beta ,\gamma)$ has one of the above properties and $\beta +\gamma +1  \in 2 \m Z.$ Then we have a contradiction since $X_{III} ( \beta , \gamma)$ has four algebraic solutions. 

Now, assume we are in Case iii), that is $\beta -\gamma =n \in 2 \m Z.$ The set $X_{III} ( \beta , \gamma)$ has no algebraic solutions, so it must be that the fiber of $\phi (x,y, \alpha,  \beta ,\gamma)$ above the unique algebraic point of $X_{II} ( \alpha)$ must be the unique order one subvariety of $X_{III} ( \beta , \gamma)$. Then the range of $\phi (x,y, \alpha,  \beta ,\gamma)$ must be all of $X_{III} ( \beta , \gamma)$, and this holds on a cofinite subset of all $(\beta , \gamma)$ such that $\beta -\gamma =n .$ In particular, this holds for all but finitely many $(\beta , \gamma)$ such that $\beta + \gamma \in 2 \m Z$ and $\beta -\gamma =n .$ By the results of \cite{umemura1998solutions} as interpreted above, 
\begin{itemize} 
\item When $\beta + \gamma \in 2 \m Z$ and $\beta -\gamma =n ,$ there are two irreducible order one subvarieties of $X_{III}( \beta , \gamma )$. 
\item When $\beta + \gamma \notin 2 \m Z$ and $\beta -\gamma =n ,$ there is one irreducible order one subvariety of $X_{III}( \beta , \gamma )$. 
\end{itemize}
It follows that the degree of the differential polynomial defining these order one subvarieties must be unbounded, since otherwise there is a Zariski dense subset of $\{(\beta , \gamma) \, | \, \beta -\gamma =n \}$ which has two disjoint infinite definable order one subsets defined by the instances of differential polynomials of bounded degree. It would then follow that there are such a subvarieties for generic $(\beta, \gamma)$ such that $ \beta -\gamma =n.$ But, we know by \cite{umemura1998solutions} that there are not two such definable subsets for $(\beta, \gamma)$ such that $ \beta -\gamma =n.$ 

But, on a cofinite subset of $(\beta , \gamma)$ such that $ \beta -\gamma =n,$ the correspondence $\phi (x,y, \alpha,  \beta ,\gamma)$ is surjective onto $X_{III} ( \beta , \gamma)$. It follows that the fiber above the algebraic point of $X_{II} ( \alpha)$ must consist of the order one subvarieties of $X_{III} ( \beta , \gamma)$. But it is impossible for the these subvarieties to be the fibers of a single formula $\phi, $ since as we explained above, their degree is unbounded. 
\end{proof} 

\begin{ques} \label{nongen3orth}
For any complex parameters, $\alpha, \beta , \gamma,$ are the generic types of $X_{II}(\alpha )$ and $X_{III} ( \beta , \gamma)$ orthogonal?
\end{ques}

We now turn to analyzing the nonorthogonality classes within the third Painlev\'e family. 

\begin{prop} \label{P3orth} Suppose that $(v_1, v_2)$ are algebraically independent and transcendental and that $(w_1,w_2)$ are algebraically independent and transcendental. For $x \in \m C$, let $\pi (x)$ denote $x (\text{mod} \, 2 \m Z).$ Then $X_{III} ( v_1, v_2 ) $ is nonorthogonal to $X_{III} ( w_1, w_2)$ if and only if the sets $\{\pi (v_2-v_1) , \pi (v_1 -v_2)\}$ and $\{\pi (w_2-w_1) , \pi (w_1 -w_2) \}$ are identical. In this case, the only algebraic relations between solutions of the equations are induced by B\"acklund transformations. 
\end{prop}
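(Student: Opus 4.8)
The plan is to mirror Proposition \ref{genone}, replacing the single-parameter bookkeeping of $P_{II}$ by a two-parameter analysis driven by Theorem \ref{Murata3} and the degeneration loci $W_1, D_1$. Write $a = v_1 + v_2$ and $b = v_2 - v_1$. The ``if'' direction is the easy half and goes exactly as in Proposition \ref{genone}: the generators $s_1, \dots, s_4$ act on $b$ by $b \mapsto -b$ and $b \mapsto -b - 2$, hence preserve $\{\pi(v_2 - v_1), \pi(v_1 - v_2)\} = \{\pi(b), \pi(-b)\}$, and suitable compositions realize the relevant Bäcklund isomorphism. The statement that the only algebraic relations are the induced ones then follows as in the last paragraph of Proposition \ref{genone}, since any extra relation would, after composing with the inverse Bäcklund map, give a nontrivial relation inside a single strictly disintegrated fiber.

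For the ``only if'' direction I would first use Lemma \ref{GST}, together with the geometric triviality and strong minimality of the fibers (valid because algebraically independent transcendental coordinates keep $\bar v, \bar w$ off $W_1 \cup D_1$), to produce a single $\m Q(t)$-formula $\phi(x, y, \bar v, \bar w)$ defining a finite-to-finite correspondence, which strict disintegration for generic parameters \cite{nagloo2011algebraic, nagloo2014algebraic} promotes to a bijection on a Zariski-dense set of specializations. I would then split on $\mathrm{td}(\bar v, \bar w)$. If $\bar v, \bar w$ are algebraically independent, the bijection locus is Zariski dense in $\m C^4$ and so contains a $\m Q^{alg}$-point at which the $\bar v$-fiber has an algebraic solution (by Theorem \ref{Murata3}) while the $\bar w$-fiber, lying off the countably many algebraic-solution hyperplanes, has none; the specialized correspondence is defined over $\m C(t)^{alg}$ and must carry an algebraic solution to an algebraic solution, a contradiction. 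Hence nonorthogonality forces $\mathrm{td}(\bar v, \bar w) < 4$.

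The substance is the case $\mathrm{td}(\bar v, \bar w) < 4$, where a nontrivial $\m Q$-relation cuts out the locus $V$ of $(\bar v, \bar w)$. Transporting the algebraic-solution locus through the bijection, I would show that specializing within $V$ so that the $\bar v$-part meets an algebraic-solution line forces the $\bar w$-part onto one as well, and conversely; matching these families of lines as the specialization varies, together with the Hilbert irreducibility and o-minimal monotonicity arguments of Proposition \ref{genone} (specializing leading coefficients to force the relation to be affine, then forcing the slopes to be $\pm 1$), reduces the relation to an affine-linear identification of the two parameter planes that carries algebraic-solution lines to algebraic-solution lines.

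The main obstacle is to separate the contributions of $a = v_1 + v_2$ and $b = v_2 - v_1$: since Murata's criterion (Theorem \ref{Murata3}) is symmetric under interchanging ``$a$ odd'' and ``$b$ odd'', matching the algebraic-solution lines alone is equally consistent with an affine relation that swaps the roles of $a$ and $b$ (which would not yield the claimed invariant) and with spurious parity shifts. The tool that breaks this is the asymmetry of the higher-degree locus recorded above: the Morley degree exceeds one precisely on $W_1 = \{v_1 - v_2 \in 2\m Z\}$, which depends only on $b$, with the further jump to degree three on $D_1 \subset W_1$, and any definable bijection between fibers must match these loci. Because only $b$ enters $W_1$, the affine relation must align the $b$-coordinates, after which the $2$-versus-$4$ count of algebraic solutions in Theorem \ref{Murata3} fixes the remaining sign and parity, giving $b_v \equiv \pm b_w \pmod{2\m Z}$ and hence the equality of invariants. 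A secondary technical point, exactly as in Proposition \ref{genone}, is arranging the various ``cofinitely many specializations'' so that the integrality and parity constraints coming from $W_1, D_1$ and Theorem \ref{Murata3} can be met simultaneously.
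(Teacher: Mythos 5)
Your overall scaffolding for the ``only if'' direction --- Lemma \ref{GST} to obtain a single $\m Q(t)^{alg}$-definable correspondence, the split on $\text{td}(\bar v,\bar w)$, specialization against Theorem \ref{Murata3} to kill the independent and $\text{td}=3$ cases, and the Hilbert-irreducibility/o-minimality machinery of Proposition \ref{genone} to force an affine relation --- is the same as the paper's. Where you genuinely diverge is in the interalgebraic case. The paper sets $c_1=\frac{v_1+v_2+1}{2}$, $c_2=\frac{v_2-v_1-1}{2}$ and the analogous $d_1,d_2$, encodes the relation by two polynomials $p(c_1,c_2,d_1)=0$, $q(c_1,c_2,d_2)=0$, reduces each to one of six linear forms, and then disposes of every subcase purely by the $0/2/4$ count of algebraic solutions; it never invokes the Morley degree stratification. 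You instead break the sum/difference symmetry by observing that the locus of Morley degree $>1$ is $W_1=\{v_1-v_2\in 2\m Z\}$, which depends only on $b=v_2-v_1$, so a definable bijection on cofinite sets cannot match a fiber carrying an order-one subvariety with a strongly minimal order-two fiber. This is a real difference, and it lands on exactly the delicate point: the paper's Case (B), subcases (1)--(2), is precisely the ``swap'' relation $d_1\equiv\pm c_2$, $d_2\equiv\pm c_1 \pmod{\m Z}$, where the algebraic-solution count alone cannot separate the intended invariant from the swapped one, and the paper dismisses it with ``it is easy to see that the proposition holds''; your Morley-degree argument actually rules the swap out, provided you check (as you should make explicit) that the definable bijection locus on the relation variety is Zariski open with complements of uniformly bounded size, so that it meets the dense family of specializations with $b_v\in 2\m Z$ and $b_w\notin 2\m Z$. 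One shared caution on the ``if'' direction, which you treat exactly as the paper does: the generators $s_1,\dots,s_4$ act on $(a,b)=(v_1+v_2,\,v_2-v_1)$ independently by signs and even shifts, so their orbits are determined by $\pi(\pm a)$ \emph{and} $\pi(\pm b)$ jointly; matching the difference invariant alone does not by itself yield a composition of the $s_i$ carrying $(v_1,v_2)$ to $(w_1,w_2)$, so ``suitable compositions realize the relevant B\"acklund isomorphism'' needs either the sums to be aligned as well or an appeal to transformations beyond the listed generators.
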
 
\begin{proof} 
If the sets $\{\pi (v_2-v_1) , \pi (v_1 -v_2)\}$ and $\{\pi (w_2-w_1) , \pi (w_1 -w_2) \}$ are identical, there is a composition of the maps given $s_i$ given above sending $(v_1, v_2)$ to $(w_1, w_2)$; the corresponding B{\"a}cklund transformation gives the nonorthogonality of the two sets. 

Now, assume that $X_{III} ( v_1, v_2 ) $ is nonorthogonal to $X_{III} ( w_1, w_2)$. Because of the strong minimality and triviality of $X_{III} (v_1, v_2)$ and $X_{III} (w_1, w_2)$ \cite{nagloo2017algebraic}, by Lemma \ref{GST}, there is a definable bijection $\phi (x,y, v_1, v_2, w_1, w_2)$ between $X_{III} (v_1, v_2)$ and $X_{III} (w_1, w_2)$, where $\phi$ is a formula over $\m Q(t) ^{alg}$. 

We consider the following cases: 
\begin{enumerate} 
\item $\text{td} ( (w_1,w_2) / \m Q (v_1,v_2)) =2$.  
\item $\text{td} ( (w_1,w_2) / \m Q (v_1,v_2)) =1$. 
\item $\text{td} ( (w_1,w_2) / \m Q (v_1,v_2)) =0$. 
\end{enumerate} 
In the first case, $\phi (x,y, v_1, v_2, w_1, w_2)$ is a bijection between $X_{III} (v_1, v_2)$ and $X_{III} (w_1, w_2)$ on a Zariski open subset of $\m C^4$ over $\m Q$ since $(v_1, v_2, w_1, w_2)$ is generic over $\m Q$. This set includes some point such that $v_2 - v_1 -1 \in 2 \m Z$ but $w_2 - w_1 -1 \notin 2 \m Z$ and $w_1 + w_2 +1 \notin 2 \m Z$. But Theorem \ref{Murata3}  shows that there for such values of $(v_1,v_2)$, $X_{III} ( v_1, v_2 ) $ has an algebraic solution and $X_{III} (w_1, w_2)$ has no algebraic solution. This is a contradiction, so $X_{III} ( v_1, v_2 ) $ is orthogonal to $X_{III} ( w_1, w_2)$ in the first case. 

In the second case, we have that either $w_1 + w_2 +1$ or $w_2 - w_1 -1$ is transcendental over $\m Q (v_1,v_2).$ In either case, there is a specialization of the point $(v_1, v_2, w_1, w_2)$ to one in which $w_1 + w_2 +1$ or $w_2 - w_1 -1$ is an even integer while $v_1, v_2 $ are algebraically independent and transcendental. This again contradicts Murata's classification of algebraic solutions, Theorem \ref{Murata3}. 

Set $d_1 = \frac{w_1 + w_2 +1}{2}$, $d_2= \frac{w_2 - w_1 -1}{2}$, $c_1 =\frac{v_1+v_2 +1}{2}$, and $c_2= \frac{v_2-v_1-1}{2}$. Notice that the number of algebraic solutions to $X_{III} (v_1,v_2)$ is controlled by whether or not $c_1, c_2$ are integral. Then in the last case, the tuples $(c_1,c_2)$ and $(d_1, d_2)$ are interalgebraic over $\m Q$, so there are polynomials $p(x_1,x_2,y_1)$ and $q (x_1, x_2, y_2)$ over $\m Q$ such that $p(c_1,c_2,d_1)=0$ and $q (c_1,c_2,d_2)=0$. For a Zariski open subset of $(c_1, c_2) \in \m C^2$, we have that the formula $\phi (x,y, v_1, v_2, w_1, w_2)$ gives a bijection between $X_{III} (v_1, v_2)$ and $X_{III} (w_1, w_2)$.

Now we apply Murata's classification, Theorem \ref{Murata3}, and the fact that a $\m C(t)^{alg}$-definable bijection preserves the number of algebraic solutions. 
So, we have that for all but finitely many $c_1 \in \m  Z$, $p,q$ have the property that for all but finitely many $c_2 \in \m Z$, the following hold:
\begin{itemize} 
	\item Both $p(c_1, c_2, y_1) =0$, $q(c_1, c_2, y_2) =0$ have only integral solutions. 
	\item If $d_1,d_2$ are integral solutions to the respective equations, then as long as $x_2$ appears in the polynomials, $p (c_1, x_2, d_1)=0$ and $q(c_1, x_2, d_2)=0$ have only integral solutions. 
	\end{itemize} 
Now we consider several cases: 
	\begin{enumerate}[label=(\Alph*)]
	\item  The variable $x_2$ appears nontrivially in neither of the equations $p (c_1, x_2, d_1)=0$ and $q(c_1, x_2, d_2)=0$ for generic $c_1, d_1$. 
	\item The variable $x_2$ appears nontrivially in precisely one of $p (c_1, x_2, d_1)=0$ and $q(c_1, x_2, d_2)=0$ for generic $c_1, d_1$. 
	\item  The variable $x_2$ appears nontrivially in both of $p (c_1, x_2, d_1)=0$ and $q(c_1, x_2, d_2)=0$ for generic $c_1, d_1$. 
	\end{enumerate} 
	 Case (A) can not occur since we are assuming that $w_1,w_2$ are algebraic over $\m Q (v_1, v_2)$. In Cases (B) and (C), we assume without loss of generality that $x_2$ appears nontrivially in $p (c_1, x_2, d_1)=0$. Then by the same arguments of Proposition \ref{genone} using the Hilbert Irreducibility theorem, since we may assume without loss of generality that $p$ is monic in $y_1$ and irreducible, we have that 
$p(x_1, x_2 ,y_1)$ must be one of the following: 
	\begin{enumerate} 
		\item $y_1 - x_2 + n$
	\item $y_1 + x_2 +n$
	\item $y_1 - x_2 - x_1 + n $
	\item $y_1 - x_2 + x_1 + n $
	\item $y_1 + x_2 - x_1 +n $
	\item $y_1 + x_2 + x_1 +n$
\end{enumerate} 
for some $n \in \m Z$. 

If we are in case (B), above, then $q(x_1,x_2,y_2)$, by arguments similar to those given above, and the fact that $x_2$ is assumed to not appear in $q$, must be one of: 
\begin{itemize} 
\item $y_2 - x_1 + n_1 $
\item $y_2 + x_1 +n_1 $ 
\end{itemize} 
for some $n_1 \in \m Z$. Now, suppose we are in one of the cases besides (1) or (2). Then the Zariski-open set of $\m C ^2 $ on which $\phi$ gives a bijection between $X_{III} (v_1, v_2) $ and $X_{III} (w_1, w_2)$ contains some points $v_1, v_2$ such that $c_2 \in \m Z$ and $c_1 \notin \m Z$. In this case, we have both $d_1,d_2 \notin \m Z$. This is a contradiction since then $X_{III} (w_1, w_2)$ has no algebraic solutions while $X_{III} (v_1, v_2) $ has two algebraic solutions. In cases (1) and (2), the proposition holds. 

If we are in Case (C), we may argue that $q(x_1,x_2,y_2)$ is in one of the same six forms as given for $p$ with $y_2$ in place of $y_1$ (with an identical argument substituting $y_2$ for $y_1$):

	\begin{enumerate} [label=(\theenumi .a)]
		\item $y_2 - x_2 + n$
		\item $y_2 + x_2 +n$
		\item $y_2 - x_2 - x_1 + n $
		\item $y_2 - x_2 + x_1 + n $
		\item $y_2 + x_2 - x_1 +n $
		\item $y_2 + x_2 + x_1 +n$
	\end{enumerate} 
In all of the cases, in which we have one of (1)-(6) and one of (1.a)-(6.a), as we will explain, there is a Zariski-open set of $\m C ^2 $ on which $\phi$ gives a bijection between $X_{III} (v_1, v_2) $ and $X_{III} (w_1, w_2)$ for which $c_2 \notin \m Z$, $c_1 \in \m Z$ and $d_1,d_2 \notin \m Z$. This gives a contradiction, since $X_{III} (v_1, v_2) $ has two algebraic solutions and $X_{III} (w_1, w_2)$ has no algebraic solutions. And so we can then assume that Case (C) does not occur. 

Suppose that $(1)$ or $(2)$ holds. Then since our tuples are assumed to be interalgebraic, we can assume that $(1.a)$ and $(2.a)$ do not occur. So, assume that one of $(3.a)$ through $(6.a)$ holds. Then pick $c_2 \in \frac{1}{2} + \m Z$ and $c_1 \in \m Z.$ Then $d_1,d_2 \notin \m Z.$ Similarly, if one of $(3)$-$(6)$ holds then again picking $c_2 \in \frac{1}{2} + \m Z$ and $c_1 \in \m Z$ implies that $d_1,d_2 \notin \m Z.$ So, we can see that Case (C) does not occur, and we have established all but the last sentence of the Proposition. 

Finally, if the algebraic relation in question is not induced by a B\"acklund transformation, we would obtain (by composing with the inverse of the algebraic relation associated with the B\"acklund transformation), a nontrivial algebraic relation between $X_{III} (v_1, v_2)$ and itself. But such a relation does not exist for $(v_1, v_2)$ generic over $\m Q$ by \cite{nagloo2014algebraic}. 
\end{proof} 
\begin{ques} Does Proposition \ref{P3orth} hold for non generic coefficients as well? 
\end{ques} 


\section{Painlev\'e four} 
Most of the differential algebraic information in this section comes from \cite{umemura1997solutions}, whose notation we also follow. The fourth Painlev\'e family of equations is given by \[ y ''  = \frac{1}{2y}(y')^2 + \frac{3}{2}y^3 + 4tq^2 + 2 (t^2 - \alpha) y + \frac{\beta }{y},\]  where $ \alpha , \beta $ range over the constants. 
Let $S_{IV} ( v_1, v_2, v_3)$ be the solution set to the following system of differential equations: 

\begin{eqnarray*}
q ' &=& 2pq -q^2 -2tq + 2 ( v_1 -v_2) \\
p' &=& 2pq-p^2 +2tp+ 2(v_1-v_3)
\end{eqnarray*}
where $\bar v = (v_1, v_2, v_3 )$ are constants such that $\bar v \in V := \{ \bar v = \, | \, v_1+v_2+v_3=0 \} $. Then when $ \alpha = 3v_3 +1 , \beta = -2 (v_2 -v_1 )^2$ the elements $q$ such that there is a $p$ so that $(q,p ) $ in the solution set of $ S_{IV} ( v_1, v_2, v_3)$ is precisely the solution set of the fourth Painlev\'e equation. So, we will work with the solution set of $S_{IV} ( v_1, v_2, v_3)$, which we denote by $X_{IV} ( v_1, v_2, v_3)$. 

Define the following affine transformations: 
\begin{eqnarray*} 
s_1 (v_1, v_2, v_3) & =& (v_2, v_1, v_3)\\
s_2 (v_1, v_2, v_3) &=& (v_3,v_2,v_1)	\\
 t_{-}  (v_1, v_2, v_3) &=&  (v_1, v_2, v_3)  + \frac{1}{3} (-1,-1,2) \\ 
 s_0 &=& t_- ^{-1} s_1 s_2 s_1 t_-
  \end{eqnarray*} 

Direct calculations allow one to verify that $s_0 ( v_1, v_2,v_3) = (v_1, v_3+1 , v_2 -1)$. Let $H$ be the subgroup generated by $s_0,s_1,s_2$. It is easy to see that $s_1$ and $s_2$ generate the natural action of the symmetric group $S_3$ on $V$. So, the transformations in $H$ are generated by permutations of the coordinates together with adding one to any coordinate while subtracting one from any other coordinate. 

For any $g \in H$, we have the a rational bijective map $g_*$ so that the following diagram commutes:
\[ \begin{CD}
X_{IV} @>g_*>> X_{IV}\\
@VV \pi V @VV \pi V\\
V @> g  >> V
\end{CD}\]

  Let $\Gamma $ be the subset of $\m C^3$ such that 
  \begin{eqnarray*} 
  Re( v_2 -v_1 ) & \geq & 0 \\
   Re( v_1 -v_3 ) & \geq & 0 \\
    Re( v_3 -v_2 +1 ) & \geq & 0 \\
    Im ( v_2 -v_1 ) & \geq &  0 \, \text{  if } Re (v_2-v_1)=0 \\
     Im ( v_1 -v_3 ) & \geq & 0 \,  \text{  if } Re( v_1 -v_3 ) =0 \\
     Im ( v_3 -v_2 ) & \geq &  0 \, \text{  if } Re( v_3 -v_2 +1 )=0
  \end{eqnarray*}
The set $\Gamma$ is a fundamental region of $V$ for the group $H$. For parameters $\bar v, \bar w$ which are in the same orbit under $H$, the sets $X_{IV} (\bar v) $ and $X_{IV} (\bar w) $ are isomorphic, so, to analyze the fourth Painlev\'e family, it is only necessary to analyze those $\bar v \in \Gamma$. 

Define \[ W = \{ \bar v \in V \, | \, v_1 -v_2 \in \m Z \text{ or } v_3 -v_2 \in \m Z \text{ or } v_1 -v_3  \in \m Z \}.\] Corollaries 3.5 and 3.9 of \cite{umemura1997solutions} imply that $X_{IV} (\bar v)$ is strongly minimal when $\bar v \in \Gamma \setminus W$. 

Define \[ D = \{ \bar v \in V \, | \, v_1 -v_2 \in \m Z \text{ and } v_3 -v_2 \in \m Z \text{ and } v_1 -v_3  \in \m Z \}.\] Then noting that $D$ is the orbit of the origin under $H$, Lemma 3.11 \cite{umemura1997solutions} implies that for $\bar v \in D$, $X_{IV}(\bar v)$ has two irreducible order one differential subvarieties over any differential field $K$ extending $\m C (t)$. So, $X_{IV}(\bar v)$ has Morley rank one and Morley degree three. If $\bar v \in W \setminus D$, then Lemma 3.10 \cite{umemura1997solutions} implies that $X_{IV} (\bar v)$ has one irreducible order one differential subvariety, and so $X_{IV} (\bar v)$ has Morley rank one and Morley degree two. 

\begin{prop} \label{P4orth} Let $\bar v = (v_1, v_2, v_3)$ and $\bar w = ( w_1, w_2, w_3) $ be elements of $V$ such that \[ \text{td} (\bar v / \m Q ) = 2.\] Then $X_{IV} (\bar v) $ is nonorthogonal to $X_{IV} ( \bar w) $ if and only if there is a permutation $\sigma$ of $\{1,2,3\}$ such that for $i=1,2,3$, $v_i - w_ {\sigma (i) } \in \m Z$.  In this case, the only algebraic relations between solutions of the equations are induced by B\"acklund transformations. 
\end{prop}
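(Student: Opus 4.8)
The plan is to follow the template of Propositions~\ref{genone} and~\ref{P3orth}, replacing the algebraic-solution count used there by the Morley degree of the fibers, which by the results of \cite{umemura1997solutions} recalled above is governed by membership in $W$ and $D$: the fiber $X_{IV}(\bar v)$ has Morley degree $1$ if $\bar v \notin W$, degree $2$ if $\bar v \in W \setminus D$, and degree $3$ if $\bar v \in D$; equivalently, in the coordinates $c_1 = v_1 - v_2$, $c_2 = v_2 - v_3$ on $V$, one has $\bar v \in W$ iff one of $c_1, c_2, c_1+c_2$ lies in $\m Z$ and $\bar v \in D$ iff both $c_1, c_2 \in \m Z$. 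The direction from the orbit condition to nonorthogonality is immediate: if $v_i - w_{\sigma(i)} \in \m Z$ for all $i$ and some permutation $\sigma$, then since $\bar v, \bar w \in V$ the vectors lie in a common orbit of the group $H$ generated by $s_0, s_1, s_2$, so $X_{IV}(\bar v)$ and $X_{IV}(\bar w)$ are isomorphic via a B{\"a}cklund transformation and hence nonorthogonal. For the converse, suppose $X_{IV}(\bar v) \not\perp X_{IV}(\bar w)$. Since $\text{td}(\bar v/\m Q) = 2$, all differences $v_i - v_j$ are transcendental, so $\bar v \notin W$ and $X_{IV}(\bar v)$ is strongly minimal and strictly disintegrated by \cite{nagloo2011algebraic}. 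As nonorthogonality refines algebraic dimension for strongly minimal sets, $X_{IV}(\bar v)$ is nonorthogonal to the order-two generic type of $X_{IV}(\bar w)$; were that type not geometrically trivial it would be orthogonal to the trivial $X_{IV}(\bar v)$, so we may assume it is trivial, and Lemma~\ref{GST} supplies a $\m Q(t)^{alg}$-definable generically finite-to-finite correspondence $\phi(x,y,\bar v, \bar w)$ which, as in Proposition~\ref{genone}, we may take to be a bijection on a cofinite subset.

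I would then split into cases according to $d := \text{td}(\bar w / \m Q(\bar v)) \in \{0,1,2\}$. When $d = 2$ the pair $(\bar v, \bar w)$ is generic in $V \times V$, so $\phi$ defines a bijection on a Zariski-dense open $U \subseteq V \times V$ over $\m Q$; since a definable bijection preserves Morley degree, the two fibers have equal Morley degree at every point of $U$, and this is contradicted by specializing to a point with $\bar v_1 \in W$ but $\bar w_1 \notin W$. Such a point exists because $U$ meets the three-dimensional loci $\{v_i - v_j = n\} \times V$ for all but finitely many $n \in \m Z$, where we may further force $\bar w_1 \notin W$ since $W$ is a countable union of curves. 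The case $d = 1$ is handled as in Case~2 of Proposition~\ref{P3orth}: at least one of the quantities $d_1, d_2, d_1+d_2$ governing membership of $\bar w$ in $W$ is transcendental over $\m Q(\bar v)$, so we may specialize along the one defining relation to force $\bar w_1 \in W$ while keeping $\bar v_1 \notin W$, again producing a Morley-degree mismatch across the bijection.

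The main case is $d = 0$, where $\bar w$ is algebraic over $\m Q(\bar v)$. Writing $d_1 = w_1 - w_2$, $d_2 = w_2 - w_3$, there are polynomials $p(c_1,c_2,y_1), q(c_1,c_2,y_2)$ over $\m Q$, which we may take irreducible and monic in $y_1, y_2$, with $p(c_1,c_2,d_1) = q(c_1,c_2,d_2) = 0$. Because $\phi$ is defined over $\m Q(t)^{alg}$ it specializes to a bijection for every $\m Q(\bar v)$-conjugate of $\bar w$, so Morley-degree preservation forces, for all but finitely many $c_1 \in \m Z$ and then all but finitely many $c_2 \in \m Z$ (where $\bar v \in D$), that every root of $p(c_1,c_2,\cdot)$ and of $q(c_1,c_2,\cdot)$ is an integer (so that $\bar w \in D$). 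Combining this with the Hilbert Irreducibility Theorem (\cite{lang1983fundamentals}, Ch.~9), which makes $p(c_1,c_2,\cdot)$ irreducible over $\m Q$ for a Hilbert set of integer $(c_1,c_2)$, forces $p$ to be linear in $y_1$; the o-minimality argument of the footnote to Proposition~\ref{genone} then pins $p$ (and likewise $q$) down to an affine form in which $d_1$ is a signed permutation of the $c_i$ plus an integer. In other words, the induced map $(c_1,c_2) \mapsto (d_1,d_2)$ lies in the image on the difference space of the $S_3 \ltimes \m Z^2$ action underlying $H$.

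Finally I would rule out the residual ``cross'' combinations of the allowed forms for $p$ and $q$ exactly as in Cases~(B) and~(C) of Proposition~\ref{P3orth}, by specializing to loci where $\bar v \in W$ (or $D$) while the combination forces $\bar w \notin W$ (or $\bar w \notin D$), contradicting Morley-degree preservation; what survives is precisely a permutation $\sigma$ with $v_i - w_{\sigma(i)} \in \m Z$. The statement about algebraic relations then follows as in Proposition~\ref{genone}: any relation not induced by the B{\"a}cklund transformation would, after composing with its inverse, yield a nontrivial relation among solutions of the single strictly disintegrated set $X_{IV}(\bar v)$, which is impossible. I expect the main obstacle to be this last combinatorial step: with three coordinates and the full $S_3 \ltimes \m Z^2$ structure there are substantially more admissible forms for the pair $(p,q)$ than in the $P_{III}$ case, and verifying that every form other than a genuine $H$-orbit map violates Morley-degree preservation on some special locus is the delicate, bookkeeping-heavy part of the argument.
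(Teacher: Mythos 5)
Your proposal is correct and follows essentially the same route as the paper: the paper's own proof is a short sketch that invokes Lemma~\ref{GST} to get a definable bijection and then runs the mechanics of Proposition~\ref{P3orth}, using precisely the Morley-degree stratification $V \setminus W$, $W \setminus D$, $D$ (degrees $1$, $2$, $3$) as the invariant that no definable bijection can violate. Your write-up is in fact more detailed than the paper's (which relegates the Hilbert-irreducibility and specialization bookkeeping to ``proceed as in \ref{P3orth}''), and your closing remarks on the final assertion and on where the combinatorial burden lies match the paper's treatment.
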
 
\begin{proof} We will only sketch the result since it is very similar to the proof of Proposition \ref{P3orth}. By the strong minimality and triviality of $X_{IV} (\bar v) $ and $X_{IV} ( \bar w) $ \cite{nagloo2017algebraic} together with Lemma \ref{GST}, we have a $\m Q(t)$ formula $\phi ( x,y , \bar v , \bar w)$ which defines a bijection between $X_{IV} (\bar v) $ and $X_{IV} ( \bar w) $. Now the mechanics of the proof proceed as in \ref{P3orth}, because no such bijection can exist between $X_{IV} (\bar v) $ and $X_{IV} ( \bar w) $ if $\bar v, \bar w$ come from different sets among $V \setminus W , W \setminus D , D$ (because in this case $X_{IV} (\bar v) $ and $X_{IV} ( \bar w) $ have different Morley degree).\footnote{It is also possible to argue here by using the classification of algebraic solutions - see \cite{Murata}.} Again, the final assertion follows as in Proposition \ref{genone} - if the algebraic relation in question is not one associated with the suitable B\"acklund transformation, then composing the two relations results in a nontrivial algebraic relation between $X_{IV} (\bar v)$ and itself, which can not happen by \cite{nagloo2014algebraic}. 
\end{proof} 
\begin{ques} 
Does Proposition \ref{P4orth} hold without the assumption that one of the equations has generic coefficients? 
\end{ques} 

Next, we establish one of the two remaining open cases of Theorem \ref{Naggy}.

	\begin{prop} \label{Naggy1} Let $\alpha$ be transcendental, and let $\bar v \in V$ be generic. Then $X_{II} ( \alpha)$ is orthogonal to $X_{IV} ( \bar v)$. 
		\end{prop}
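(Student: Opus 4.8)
The plan is to run the template established in Lemma~\ref{kernel} and Propositions~\ref{orthofam3} and~\ref{P3orth}: assume nonorthogonality, extract a parameter-definable bijection from Lemma~\ref{GST}, and then specialize the parameters into the locus where the number of algebraic solutions of the two sides disagrees, contradicting Murata's classifications.

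First I would record the inputs. Since $\alpha$ is transcendental, $X_{II}(\alpha)$ is strongly minimal, geometrically trivial and strictly disintegrated \cite{nagloo2011algebraic, nagloo2014algebraic}, and it has no $\m C(t)^{alg}$-points because $\alpha \notin \m Z$ \cite{Murata}. Since $\bar v$ is generic, every difference $v_i - v_j$ is transcendental, so $\bar v \notin W$ and, after moving into $\Gamma$ by an element of $H$, $X_{IV}(\bar v)$ is strongly minimal by the Condition~J discussion above; it is likewise geometrically trivial, strictly disintegrated \cite{nagloo2011algebraic}, and has no algebraic solutions. Assuming for contradiction that $X_{II}(\alpha) \not\perp X_{IV}(\bar v)$, Lemma~\ref{GST} produces a finite-to-finite correspondence over $\m Q(t,\alpha,\bar v)^{alg}$; since both sets are strictly disintegrated with no algebraic points, exactly as in Lemma~\ref{kernel} this correspondence must be a definable bijection $\phi(x,y,\alpha,\bar v)$ between $X_{II}(\alpha)$ and $X_{IV}(\bar v)$, realized by a formula over $\m Q(t)^{alg}$.

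The heart of the argument is a specialization. The set $S \subseteq \m C \times V$ of parameter values $(\alpha',\bar w)$ for which $\phi(x,y,\alpha',\bar w)$ is a bijection between $X_{II}(\alpha')$ and $X_{IV}(\bar w)$ is definable over $\m Q(t)^{alg}$; since the constants form a stably embedded pure algebraically closed field, $S$ is constructible over $\overline{\m Q}$ and contains $(\alpha,\bar v)$. Hence $S$ contains a Zariski-dense open subset $U$ of $Z := \overline{\{(\alpha,\bar v)\}}$. On the other side, Murata's classification \cite{Murata} describes the locus $L \subseteq V$ on which $X_{IV}(\bar w)$ acquires algebraic solutions as a nonempty union of affine lines. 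I would then choose $(\alpha_1,\bar w)\in U$ with $\bar w \in L$ and $\alpha_1 \notin \m Z$: when $\alpha$ and $\bar v$ are independent we have $Z = \m C \times V$, and $U \cap (\m C \times \ell)$ is dense open in the two-dimensional set $\m C \times \ell$ for any line $\ell \subseteq L$, while the dependent case $\dim Z = 2$ is handled by intersecting $Z$ with $\m C \times \ell$ for all but finitely many lines $\ell$ to obtain a curve meeting $U$ off $\{\alpha = n : n \in \m Z\}$. Because the specialized $\phi$ is a surjective finite-to-finite correspondence defined over $\m C(t)^{alg}$, an algebraic solution of $X_{IV}(\bar w)$ pulls back through $\phi$ to finitely many points, each algebraic over $\m C(t)^{alg}$, hence to an algebraic solution of $X_{II}(\alpha_1)$; since $\alpha_1 \notin \m Z$ there is none, a contradiction.

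I expect the main obstacle to be the bookkeeping of how the domain and range of $\phi$ degenerate under specialization — precisely the phenomenon treated case-by-case in Propositions~\ref{orthofam3} and~\ref{P3orth}, where the jump in the number of algebraic solutions occurs exactly at the parameters where the clean bijection breaks down (at $\bar w \in L$ the set $X_{IV}(\bar w)$ is no longer strongly minimal). One must check that a specialization $(\alpha_1,\bar w)$ can be chosen inside the good locus $U$ so that the algebraic solution of $X_{IV}(\bar w)$ genuinely lies in the range of the specialized correspondence, and that $\alpha$ is not accidentally pinned to an integer along the curve used in the dependent case. Both points are controlled by the same genericity and Zariski-density arguments used in the earlier propositions, together with the fact that $L$ contains infinitely many lines whereas $Z \setminus U$ and $\{\alpha \in \m Z\}$ are small.
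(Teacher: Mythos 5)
Your overall skeleton (assume nonorthogonality, invoke Lemma~\ref{GST} to get a $\m Q(t)^{alg}$-definable bijection, specialize the parameters along the constructible locus where the bijection persists) matches the paper's, but the invariant you use to derive the contradiction --- counting algebraic solutions via Murata --- is exactly the one the paper's proof explicitly rejects for this pair. The footnote to the paper's proof of Proposition~\ref{Naggy1} records Nagloo's observation that the structure of algebraic solutions of $P_{II}$ and $P_{IV}$ is too similar for this ``soft'' argument to close, and this is precisely why the $P_{II}$--$P_{IV}$ case was left open in \cite{nagloo2015transformations}. The gap surfaces in the case you relegate to a parenthetical: the hypotheses only say $\alpha$ is transcendental and $\bar v$ is generic in $V$, so $\alpha$ may be algebraic over $\m Q(\bar v)$ and the locus $Z$ may be a surface. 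Your plan to intersect $Z$ with $\m C \times \ell$ for lines $\ell$ in the algebraic-solution locus $L$ and land off $\{\alpha \in \m Z\}$ can fail outright: the algebraic relation defining $Z$ can align $\{\alpha \in \m Z\}$ exactly with the preimage of $L$ (schematically, $Z = \{\alpha = v_1 - v_2\}$ with $L = \{v_1 - v_2 \in \m Z\}$), so that \emph{every} specialization with $\bar w \in L$ forces $\alpha_1 \in \m Z$. This is not a pathological scenario --- it is exactly the shape of relation one must rule out, since the special loci of both families are lattice translates --- and once both sides acquire algebraic solutions with matching counts, no contradiction is available. The sets $Z \setminus U$ and $\{\alpha \in \m Z\}$ are not ``small'' relative to $Z \cap (\m C \times L)$; the latter can be entirely contained in the former's union with the integer locus.

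The paper's proof sidesteps this by using a different invariant that needs no control over the specialized $\alpha$: Morley degree. It specializes $\bar z_1$ into the set $D \subset V$ (the $H$-orbit of the origin), which is Zariski dense, and where by \cite{umemura1997solutions} the set $X_{IV}(\bar z_1)$ has two irreducible order-one subvarieties and hence Morley degree three; meanwhile $X_{II}(z_0)$ has Morley degree at most two for \emph{every} value of $z_0$, as established in Section~\ref{oneone}. A definable bijection preserves Morley degree, so the contradiction is immediate regardless of where $\alpha$ specializes. If you want to salvage your write-up, you should either add the hypothesis that $\alpha$ is independent from $\bar v$ (in which case your three-dimensional specialization argument does go through, but proves less than the proposition and than Theorem~\ref{Naggy} requires) or replace the algebraic-solution count with the Morley degree comparison.
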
 
		\begin{proof} By \cite{nagloo2017algebraic}, both $X_{II} ( \alpha)$ and $X_{IV} ( \bar v)$ are strongly minimal and strictly disintegrated (thus geometrically trivial), so by Lemma \ref{GST}, if the two sets are nonorthogonal, they are non weakly orthogonal, and there is a formula $\phi (x,y, \alpha , \bar v ) $ with parameters from $\m Q (t) ^alg$ which gives a definable bijection between $X_{II} ( \alpha)$ and $X_{IV} ( \bar v)$. By quantifier elimination and stable embeddedness of the constants, $\phi (x,y, z_0 , \bar z_1 ) $ gives a bijection between $X_{II} ( z_0)$ and $X_{IV} ( \bar z_1)$ on some Zariski open subset of the locus of $( \alpha , \bar v )$ over $\m Q ^{alg}$. This locus must, by the assumption that $\bar v \in V$ is generic, project dominantly onto $V$. 
		
Thus, because the set $D \subset V$ is Zariski dense, $\phi (x,y, z_0 , \bar z_1 ) $ gives a bijection between $X_{II} ( z_0)$ and $X_{IV} ( \bar z_1)$ for some $ \bar z_1 \in D$. This is impossible, since by the results of \cite{umemura1997solutions} interpreted above, the Morley degree of $X_{IV} ( \bar z_1)$ is three and the Morley degree of $X_{II} ( z_0)$ is at most two as established in Section \ref{oneone}. 
		\end{proof} 

As Nagloo notices \cite{nagloo2015transformations}, by Murata's results, \cite{Murata}, the structure of the algebraic solutions of $P_{II}$ and $P_{IV}$ are very similar. In fact, the stucture of the relations is such that the arguments of \cite{nagloo2015transformations} do not seem to be adaptable to this case. In differential algebraic terms, our argument is relying on the differences in the structure of the classical solutions of $P_{II}$ and $P_{IV}$, which manifest in the form of differences in Morley degree of the fibers. See \cite{nagloo2017algebraic} for an explanation of classical solutions in model theoretic terms. 
	
\section{Painlev\'e five} \label{P5}
Our notation in this section comes from \cite{watanabe1995solutions}, whose results we will utilize. The fifth Painlev\'e family is equivalent to the following system of equations \begin{eqnarray*} t Q' & = & 2 Q (Q-1)^2 P + (3v_1+ v_2 ) Q^2 -(t+4v_1)Q + v_1 -v_2 \\ tP' &=& (-3Q^2 +4Q -1) P^2 -2(3v_1 +v_2 ) Q P + (t+4v_1) P - (v_3-v_1)(v_4-v_1)
\end{eqnarray*} 
where $\bar v = (v_1, v_2, v_3, v_4) \in \m C^4$ lies in the hyperplane $V$ defined by $\sum v_i = 0$. 
Setting $q = \frac{Q}{Q-1 } $ and $p = -(Q-1)^2 P + (v_3-v_1 )(Q-1)$ then $p$ and $q$ satisfy 
\begin{eqnarray*} t q' & = & 2 q^2 p - 2 qp + t q^2 -tq + ( v_1-v_2-v_3+v_4) q + v_2 -v_1 \\ 
t p' &=& -2 qp^2 + p^2 -2tpq +tp -(v_1-v_2-v_3+v_4) p + (v_3-v_1)t
\end{eqnarray*} 
and solutions to this system are birational with the solutions to our earlier system. The properties we study are not sensitive up to birationality, so we will work with this second system, whose solution set we denote by $X_V ( \bar v)$. Let $X_V$ denote \[\{(p,q, \bar v ) \, | \, (p,q) \in X_V (\bar v ), \, \bar v \in V \},\]
and let $\pi_2$ denote the natural map from $X_V \rightarrow V$ given by projection. 
Let \[W = \left\{ \bar v \in V \, | \, \text{for some } 1 \leq i \leq j \leq 4, \, v_i - v_j \in \m Z \right\}.\] Corollary 2.6 of  \cite{watanabe1995solutions} implies that for $\bar v \notin W$, $S( \bar v )$ is strongly minimal. In particular, for generic parameters, $X_V( \bar v)$ is strongly minimal. Lemmas 3.1-3.4 of \cite{watanabe1995solutions} imply that for $\bar v \in W$ the Morley rank of $X_V (\bar v)$ has Morley rank one and Morley degree between two and four; the specific loci with a given degree can be deduced from the cited lemmas and noting that a group of affine transformations specified in \cite{watanabe1995solutions} acts on the family of equations. First we describe the group of transformation, and then we will give the specify the set of parameters with a fixed degree. 

Let 
\begin{eqnarray*} 
	s_1 ( v_1, v_2, v_3 ,v_4 ) & := & (v_2, v_1, v_3, v_4) \\
	s_2 ( v_1, v_2, v_3 ,v_4 ) & := & (v_3, v_2, v_1, v_4) \\
	s_3 ( v_1, v_2, v_3 ,v_4 ) & := & (v_1, v_2, v_4, v_3) \\
	t_{-} ( v_1, v_2, v_3 ,v_4 ) & := & (v_1, v_2, v_3, v_4) + \frac{1}{4} (-1,-1,-1,3) 
\end{eqnarray*}
Let $G$ be the group of transformations of $V$ generated by $s_1, s_2, s_3, t_-$. 
Then, for instance, \[t_-^{-1} s_3 s_1s_2 s_1 s_3 t _- (v_1, v_2,v_3,v_4) = (v_1, v_4+1 , v_3 , v_2 -1),\] and $\langle s_1, s_2, s_3 \rangle = S_4$. For each $\tau \in G$, there is a birational bijective map (defined over $\m Q$) $\tau_*: X_V \rightarrow X_V$ such that the following diagram commutes: 
\[ \begin{CD}
X_V @>\tau_*>> X_V\\
@VV \pi_2 V @VV \pi_2 V\\
V @> \tau  >> V
\end{CD}.\] 
The transformations are constructed (in slightly different notation) in \cite{okamoto1986studies}, and are given in \cite{watanabe1995solutions} similar notation to ours. Thus, the sets $X_V (\bar v_1)$ and $ X_ V ( \bar v_2) ,$ where $\bar v_1, \bar v_2$ are such that there is $\tau \in G$ such that $\tau (\bar v_1) = \bar v_2$, are nonorthogonal (there is a $\m Q$-definable bijection between the sets). We next aim to characterize the orbits of the action of $G$ on $V$ in order to state our results on nonorthogonality. 

The group $G$ contains each of the following transformations: 
\begin{eqnarray*} 
	\alpha_1  ( v_1, v_2, v_3 ,v_4 ) & := & ( v_1, v_2, v_3 ,v_4 ) + \frac{1}{4} (1,1,1,-3) \\
	\alpha_2  ( v_1, v_2, v_3 ,v_4 ) & := & ( v_1, v_2, v_3 ,v_4 ) + \frac{1}{4} (0,0,-4,4) \\
	\alpha_3  ( v_1, v_2, v_3 ,v_4 ) & := & ( v_1, v_2, v_3 ,v_4 ) + \frac{1}{4} (0,-4,0,4) \\
	\alpha_4  ( v_1, v_2, v_3 ,v_4 ) & := & ( v_1, v_2, v_3 ,v_4 ) + \frac{1}{4} (-4,0,0,4). \\
\end{eqnarray*}
Let $H$ be the $\m Z$-module generated by $\alpha_1, \ldots , \alpha_4$. It can be verified that any element of $G$ written as $\sigma \tau $ where $\tau \in H$ and $\sigma \in S_4$ can be rewritten as $\tau_1 \sigma $ for appropriately chosen $\tau _1 \in H$. So, two elements $\bar v , \bar w \in V$ are related by an element of $G$ if there is a permutation $\sigma $ and some elements $\tau \in H$ such that 

\[\tau \left(v_{\sigma (1)} , v_{\sigma (2)}, v_{\sigma (3)}, v_ {\sigma (4)} \right) = \left(w_1,w_2,w_3, w_4 \right).\] 

This last condition is equivalent to there existing $ a \in \m Z$ such that:
\[\frac{a}{4} \left( 1,1,1.-3 \right) + \left(v_{\sigma (1)} , v_{\sigma (2)}, v_{\sigma (3)}, v_ {\sigma (4)} \right) - \left(w_1,w_2,w_3, w_4 \right)  \in \m Z^4.\] 

Of course, since the the elements $\bar v$ of $V$ satisfy that $\sum v_i =0$, the last condition is equivalent to:
\[\frac{a}{4} \left( 1,1,1 \right) + \left(v_{\sigma (1)} , v_{\sigma (2)}, v_{\sigma (3)}\right) - \left(w_1,w_2,w_3\right)  \in \m Z^3.\] 

Next, we describe the results of \cite{watanabe1995solutions} in model theoretic terms. Let $W_1$ denote the set of $\bar v \in V$ such that for some $\sigma \in S_4,$ $v_{\sigma (1)} - v_{\sigma (2)}  \in \m Z$ and $v_{\sigma (3)} - v_{\sigma (4)}  \in \m Z.$ Let $W_2$ denote the set of $\bar v \in V$ such that for some $\sigma \in S_4,$ $v_{\sigma (1)} - v_{\sigma (2)}  \in \m Z$ and $v_{\sigma (2)} - v_{\sigma (3)}  \in \m Z.$ Noting the action of the group $G$, Lemma 3.1 of \cite{watanabe1995solutions} implies that for $\bar v $ such that $\bar v \in W$, but $\bar v \notin W_1 \cup W_2$ , the set $X_V(\bar v)$ is of Morley rank one and Morley degree two. Let $D$ denote the subset of $\bar v \in V$ such that each of the entries of $ \bar v$ is in the same $\m Z$-coset. Again noting the action of $G$, Lemma 3.2 of \cite{watanabe1995solutions} implies that for $\bar v  \in W_2$, but $\bar v \notin D$, $X_V(\bar v)$ has Morley rank one and Morley degree three. Lemma 3.3 of \cite{watanabe1995solutions} implies that for $\bar v$ such that $\bar v \in W_1$ and $\bar v \notin D$, $X_V( \bar v)$ has Morley rank one and Morley degree three. Lemma 3.4 of \cite{watanabe1995solutions} implies that for $\bar v \in D$, $X_V(\bar v)$ has Morley rank one and Morley degree four. By \cite{watanabe1995solutions}, Theorem 0.4, for $\bar v \in W_2 \setminus D$, $X_V(\bar v)$ has one algebraic solution. If $\bar v \in D,$ $X_V ( \bar v ) $ has two algebraic solutions. 

Armed with the the above information, we now establish Theorem \ref{Naggy} by covering the last remaining case: 
	
	\begin{prop} \label{Naggy2} Let $\alpha,\beta$ be transcendental and independent, and let $\bar v \in V$ be generic. Then $X_{III} ( \alpha, \beta)$ is orthogonal to $X_{V} ( \bar v)$. 

		\end{prop}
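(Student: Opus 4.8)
The plan is to follow the proof of Proposition \ref{Naggy1} essentially verbatim, substituting the Morley-degree data of $P_V$ for that of $P_{IV}$: the numerical input is that the fifth family attains Morley degree $4$ (on the locus $D$), whereas the third family attains Morley degree at most $3$ (its maximum, $3$, occurring on $D_1$). Concretely, I would first note that by \cite{nagloo2011algebraic} the genericity of $\bar v$ together with the independence and transcendence of $\alpha,\beta$ makes both $X_{III}(\alpha,\beta)$ and $X_V(\bar v)$ strongly minimal and strictly disintegrated. Assuming toward a contradiction that they are nonorthogonal, Lemma \ref{GST} makes them non weakly orthogonal over $K=\m Q(t,\alpha,\beta,\bar v)^{alg}$, and strict disintegration forces the resulting finite-to-finite correspondence to be one-to-one; hence there is a formula $\phi(x,y,\alpha,\beta,\bar v)$ with parameters in $\m Q(t)^{alg}$ defining a bijection between $X_{III}(\alpha,\beta)$ and $X_V(\bar v)$.

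Next I would spread $\phi$ across the parameter space. By quantifier elimination and stable embeddedness of the constants, the assertion that $\phi(x,y,z_0,z_1,\bar z_2)$ defines a bijection between $X_{III}(z_0,z_1)$ and $X_V(\bar z_2)$ is a constructible condition on $(z_0,z_1,\bar z_2)$; it holds at $(\alpha,\beta,\bar v)$ and therefore on a Zariski open subset $U$ of the locus $L$ of $(\alpha,\beta,\bar v)$ over $\m Q^{alg}$. Because $\bar v$ is a generic point of $V$, the projection $L\to V$ is dominant, so the constructible image of $U$ contains a Zariski dense open subset of $V$.

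The decisive step is to specialize into the degree-$4$ locus. The set $D\subseteq V$ on which $X_V$ has Morley degree $4$ consists of those $\bar v$ all of whose coordinates lie in a single $\m Z$-coset subject to $\sum v_i=0$; this is a full-rank lattice in $V$ and is therefore Zariski dense. Consequently $D$ meets the dense open image of $U$, and choosing $(z_0,z_1,\bar z_2)\in U$ with $\bar z_2\in D$ yields a definable bijection between $X_{III}(z_0,z_1)$, of Morley degree at most $3$, and $X_V(\bar z_2)$, of Morley degree $4$. Since a definable bijection preserves Morley degree, this is a contradiction, establishing orthogonality.

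I expect the only genuine content beyond the template to be the verification underlying the specialization step, namely that the degree-$4$ locus $D$ is Zariski dense, so that the dominant projection of $U$ is guaranteed to meet it; this is exactly the device that drives Proposition \ref{Naggy1} (a full-rank lattice of resonant parameters), and it is what lets the crude inequality $4>3$ do all the work. One could instead run the contradiction through the count of algebraic solutions using \cite{watanabe1995solutions}, Theorem 0.4, but the Morley-degree comparison is preferable because it requires no control over the value to which $(\alpha,\beta)$ specializes.
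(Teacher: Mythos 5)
Your proposal is correct and follows essentially the same route as the paper: the paper's proof of Proposition \ref{Naggy2} simply refers back to the argument of Proposition \ref{Naggy1} and notes that the decisive numerical input is that $X_V(\bar v)$ has Morley degree four on the Zariski dense set $D\subseteq V$ while every fiber of the third family has Morley degree at most three. Your spreading-out and specialization steps are exactly the template the paper invokes, so nothing further is needed.
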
 
	\begin{proof} By \cite{nagloo2017algebraic}, both $X_{III} ( \alpha, \beta )$ and $X_{V} ( \bar v)$ are strongly minimal and strictly disintegrated (thus geometrically trivial). Then, by Lemma \ref{GST}, if the two sets are nonorthogonal, there must be a formula $\phi (x,y, \alpha, \beta , \bar v ) $ with parameters from $\m Q (t) ^alg$ which gives a definable bijection between $X_{III} ( \alpha, \beta)$ and $X_{V} ( \bar v)$. By quantifier elimination and stable embeddedness of the constants, $\phi (x,y, w_1,w_2 , \bar z ) $ gives a bijection between $X_{III} ( w_1, w_2)$ and $X_{V} ( \bar z)$ on some Zariski open subset of the locus of $( \alpha, \beta , \bar v )$ over $\m Q ^{alg}$. This locus must project dominantly onto $V$. 
	
	As the set $D$ is dense in $V,$ we must have such a bijection for some $\bar z \in D.$ But then by Lemma 3.4 of \cite{watanabe1995solutions} as interpreted above, $X_{V} ( \bar z)$ has Morley degree four, while by \cite{umemura1998solutions} as interpreted in Section \ref{twotwo}, any equation in the third Painlev\'e family has Morley degree at most three. This is a contradiction as definable bijections preserve Morley degree. 
	\end{proof} 
	
	The B\"acklund transformations detailed above each give rise to instances of nonorthogonality between fibers of the the fifth Painlev\'e family, and next we turn our attention towards towards classifying algebraic relations within the family. 

\begin{ques} \label{ques5} If $\bar v, \bar w \in V$ are such that there is no $\sigma \in S_4$ and $a \in \m Z$ such that \[\frac{a}{4} \left( 1,1,1 \right) + \left(v_{\sigma (1)} , v_{\sigma (2)}, v_{\sigma (3)}\right) - \left(w_1,w_2,w_3\right)  \in \m Z^3,\] then are $X_V (\bar v )$ and $X_V ( \bar w)$ orthogonal? 
\end{ques}
As in previous sections, our techniques can be used to answer the question for generic values of the parameters. Similar to the other families of equations, additional cases which we do not state specifically are accessible via our methods, which rely on specialization arguments which exploit the structure of the exceptional fibers of Morley degree larger than one. 

\begin{prop} \label{P5ortho} Let $\bar v \in V$ be generic over $\m Q$. Let $\bar w \in V$. Then $X_V ( \bar v)$ is orthogonal to $X_V ( \bar w)$ unless there is $\sigma \in S_4$ and $a \in \m Z$ such that \[\frac{a}{4} \left( 1,1,1.-3 \right) + \left(v_{\sigma (1)} , v_{\sigma (2)}, v_{\sigma (3)}, v_ {\sigma (4)} \right) - \left(w_1,w_2,w_3, w_4 \right)  \in \m Z^4.\]  In this case, the only algebraic relations between solutions of the equations are induced by B\"acklund transformations. 
	\end{prop} 
	\begin{proof} 
	By \cite{nagloo2014algebraic}, since $X_V (\bar v )$ is strictly disintegrated for $\bar v $ generic over $\m Q$ if $X_V ( \bar v)$ is nonorthogonal to $X_V ( \bar w)$, then this relation must be witnessed by a formula $\phi (x,y, \bar v , \bar w , t)$ which can be taken to be over $\m Q$ by Lemma \ref{GST}. It follows by the  disintegratedness of $X_V (\bar v )$ that $\phi (x,y, \bar v , \bar w , t)$ must define a generically finite-to-one map from $X_V ( \bar w)$ to $X_V (\bar v )$. 
	
	 We now consider two cases: 
\begin{itemize} 
\item Case I: $\bar v$ and $\bar w$ are not interalgebraic over $\m Q$.
\item Case II: $\bar v$ and $\bar w$ are interalgebraic over $\m Q$. 
\end{itemize} 
Suppose we are in Case I. Now, there are two subcases: 
\begin{itemize} 
\item Case Ia: There are two elements of $\bar w$ whose difference is an integer. In this case, the Morley degree of $X_V (\bar w)$ is greater than one, and it must be the case the the map defines a $n$-to-one map from $X_V (\bar w) \setminus S$ to $X_V (\bar v)$, where $S$ is a finite union of order one subvarieties - note that the equations defining $S$ depend only on the given integral differences (and the coordinates which differ by integers). This statement must be true on an open Zariski-dense subset $U$ of $(\bar w_0, \bar v_0)$ in the locus of $(\bar w, \bar v)$ over $\m Q$. Thus, noting that $D$ is dense in $V$, it must hold for some $(\bar w_0, \bar v_0) \in ( \m V \times D) \cap U$. Note that the fact that $(\bar w_0, \bar v_0)$ is in the locus of $(\bar w, \bar v)$ over $\m Q$ implies that $S$ is an order one subvariety of $X_V (\bar w)$. But we have a $n$-to-one $X_V (\bar w_0) \setminus S$ to $X_V (\bar v_0)$, but $X_V (\bar w_0) \setminus S$ has Morley degree at most $3$ since $S$ is an order one subvariety, while $X_V (\bar v_0)$ has Morley degree $4$, a contradiction. 

\item Case Ib: The difference between any two elements of $\bar w$ is not an integer. In this case, the result follows similarly to Case Ia after noting that in any Zariski open set containing $(\bar w , \bar v)$ we must have a point $(\bar w, \bar v_0)$ such that there are two coordinates of $\bar v_0$ which differ by an integer. 
\end{itemize} 
Now we consider case II. Here $\bar w$ must be generic in $V$ over $\m Q.$ It follows that $\phi (x,y, \bar v_0 , \bar w_0 , t)$ defines a bijection between $X_V ( \bar v_0)$ and $X_V ( \bar w_0)$ for a Zariski-dense open set of $(\bar v_0, \bar w_0)$ of $X,$ the locus of $(\bar v, \bar w)$ over $\m Q.$ We denote this open set by $U$. 

Set $c_i: = v_i -v_{i+1}$ for $i=1,2,3$ and $d_i: = w_i -w_{i+1}$ for $i=1,2,3.$ It follows from our assumptions (being in Case II) that $\bar c $ is interalgebraic with $\bar d$ over $\m Q$. So, for each $i=1,2,3,$ we have some polynomial over $\m Q$ with $p_i (\bar c ,  d_i) = 0$. Since Morley degree is preserved by definable bijections and the Morley degree of equations in this family is maximal precisely when all parameters are in the same $\m Z$-coset, for some Zariski open $U_1 \subset \m A^3,$ if $\bar c \in U \cap \m Z^3$ we must have that $p_i ( \bar c , d_i)$ for $i=1,2,3$ has only integer solutions $d_i$ (note here that $U_1$ is the projection of $U$ from the previous paragraph); similarly for $\bar c$ integral in a suitable open set $U_2,$ there are only integral solutions $\bar d$ to the system.  

As in the proofs of Propositions \ref{genone} and \ref{P3orth}, by the Hilbert Irreducibility theorem we must have that $$p_i ( \bar c , d_i)  =d_i + \sum_{j=1}^3 \epsilon _{i,j} c_j +n_i$$ for some $\epsilon _{i,j} \in \{-1,0,1 \}$ and some $n_i \in \m Z.$  

Now, fix $c_1 \in \m Z$ such that the fiber of $U_2$ above $c_1$ is dense in $\m C^2.$ Now, the argument from Cases (A)-(C) of the proof of Proposition \ref{P3orth} applies to the pairs of polynomials $p_i$, $p_j$, and it follows that in and given $p_i$ only $c_2$ or $c_3$ can appear nontrivially, but not both. 

One can repeat the argument of the above paragraph with $c_2$ and $c_3$ in place of $c_1$ to see that in each $p_i$ only one of $c_1,c_2,c_3$ can appear. So, perhaps after permuting the $c_j$, we can assume that $p_i$ is given by $d_i + \epsilon _i c_i +n_i$ where $\epsilon _i$ is $\pm 1.$

Simplifying and applying the relation $\sum _{i=1}^4 v_i = \sum _{i=1}^4 w_i =0$ we obtain the following system: \begin{align*}
\left( \begin{matrix}
1 & -1 & 0 & 0\\
0 & 1 & -1 & 0\\ 
1 & 1 & 2 & 0 \\
0 & 0 & 0 & 1
\end{matrix}\right) \left( \begin{matrix}
v_1 \\
v_2 \\
v_3 \\
1
\end{matrix}\right) = \left( \begin{matrix}
\epsilon_1 & -\epsilon_1 & 0 & n_1\\
0 & \epsilon_2 & -\epsilon_2 & n_2\\ 
\epsilon_3 & \epsilon_3 & 2\epsilon_3 & n_3 \\
0 & 0 & 0 & 1
\end{matrix}\right) \left( \begin{matrix}
w_1 \\
w_2 \\
w_3 \\
1
\end{matrix}\right)
\end{align*} 

and so after inverting the matrix on the left and multiplying, we obtain: 
 \begin{align*}
    \left( \begin{matrix}
v_1 \\
v_2 \\
v_3 \\
1
\end{matrix}\right) =
 \frac{1}{4} & \left( \begin{matrix}
3 \epsilon_1 +  \epsilon_3& -3  \epsilon_1 +2  \epsilon_2 +  \epsilon_3 & -2 ( \epsilon_2 -  \epsilon_3) & 3n_1 +2 n_2 +n_3 \\
- \epsilon_1 +  \epsilon_3 &  \epsilon_1 + 2  \epsilon_2 + \epsilon_3 & -2 ( \epsilon_2 - \epsilon_3 ) & -n_1 +2 n_2 +n_3\\ 
- \epsilon_1 + \epsilon_3 &  \epsilon_1 -2  \epsilon_2 + \epsilon_3 & 2( \epsilon_2 + \epsilon_3) & -n_1 -2n_2+n_3\\
0 & 0 & 0 & 4
\end{matrix}\right) \left( \begin{matrix}
w_1 \\
w_2 \\
w_3 \\
1
\end{matrix}\right)
\end{align*}
It is clear that we must have $3n_1+2n_2+n_3 = -n_1 +2n_2+n_3 = -n_1 -2n_2 +n_3$ modulo $4,$ since otherwise there is a Zariski dense set of integers $w_1, w_2,w_3$ such that $v_1, v_2, v_3$ do not each differ by an integer. By arguments above, this is impossible. 

If we have that $\epsilon_1= 1, \epsilon_2=-1, \epsilon_3 = -1$, then the matrix becomes: 
\begin{align*}
\left( \begin{matrix}
\frac{1}{2} & \frac{-3}{2} & 0 & \frac{3n_1 +2 n_2 +n_3}{4} \\
\frac{-1}{2} & \frac{-1}{2} & 0 & \frac{-n_1 +2 n_2 +n_3}{4}\\
\frac{-1}{2} & \frac{1}{2} & \frac{-1}{2} & \frac{-n_1 -2n_2+n_3}{4} \\
0 & 0 & 0 & 1
\end{matrix}\right) 
\end{align*} 

Recall that we can assume that $\frac{3n_1 +2 n_2 +n_3}{4}=\frac{-n_1 +2 n_2 +n_3}{4}=\frac{-n_1 -2n_2+n_3}{4}$ as an element of $\m R /\m Z.$ Call this element $a$. In this case, picking $w_1$ even and $w_2,w_3$ odd integers gives, after translating by $-a$, $v_1, v_2 \in \frac{1}{2}+\m Z$ and $v_3 \in \m Z$. Since the set of such $\bar w$ is dense, this gives a contradiction as above since Morley rank is an invariant of definable bijections. Similarly, one obtains a nearly identical contradiction when $\epsilon_1= -1, \epsilon_2=1, \epsilon_3 = 1$. So, this case is also impossible. 

If we have that $\epsilon_1= 1, \epsilon_2=-1, \epsilon_3 = 1$ the matrix becomes: 
\begin{align*}
\left( \begin{matrix}
1 & -1 & -1 & \frac{3n_1 +2 n_2 +n_3}{4} \\
0 & \frac{-1}{2} & -1 & \frac{-n_1 +2 n_2 +n_3}{4}\\
0 & \frac{1}{2} & 0 & \frac{-n_1 -2n_2+n_3}{4} \\
0 & 0 & 0 & 1
\end{matrix}\right) 
\end{align*} 

In this case, if $w_2$ is odd, then after translation by $-a$, we obtain $v_1 \in \m Z$ but $v_2, v_3 \in \frac{1}{2}+ \m Z$. A contradiction is obtained as in the previous cases. Similarly, the case $\epsilon_1= -1, \epsilon_2=1, \epsilon_3 = -1$ is also impossible. 

If we have that  $\epsilon_1= 1, \epsilon_2=1, \epsilon_3 = -1$, then our matrix becomes: 
\begin{align*}
\left( \begin{matrix}
\frac{1}{2} & -\frac{1}{2} & -1 & \frac{3n_1 +2 n_2 +n_3}{4} \\
0 & \frac{1}{2} & -1 & \frac{-n_1 +2 n_2 +n_3}{4}\\
0 & -\frac{1}{2} & 0 & \frac{-n_1 -2n_2+n_3}{4} \\
0 & 0 & 0 & 1
\end{matrix}\right) 
\end{align*} 
In this case, picking $w_1$ odd and $w_2$ even and $w_3 $ any integer yields (after translation by $-a$) $v_1 \in \frac{1}{2}+\m Z$ and $v_2,v_3 \in \m Z$, a contradiction as in the previous cases. Finally, the case $\epsilon_1= -1, \epsilon_2=-1, \epsilon_3 = 1$ is almost identical. 

Now we have shown that there are only two possible cases $\epsilon_1=\epsilon_2=\epsilon_3=\pm 1$. In the case that $\epsilon_i = -1,$ then we have a system of the form: 

\begin{align} \label{lastcase5}
    \left( \begin{matrix}
v_1 \\
v_2 \\
v_3 \\
1
\end{matrix}\right) =
\left( \begin{matrix}
-1 & 0 & 0 & \frac{m_1}{4} \\
0 & -1 & 0 & \frac{m_2}{4}\\
0 & 0 & -1 & \frac{m_3}{4} \\
0 & 0 & 0 & 1
\end{matrix}\right)  \left( \begin{matrix}
w_1 \\
w_2 \\
w_3 \\
1
\end{matrix}\right)
\end{align} where $m_1,m_2,m_3 \in \m Z$ and are congruent modulo $4$. We will next show that this is impossible, and so we must have $\epsilon_1=\epsilon_2=\epsilon_3=1$. Suppose not. Then because the generic fiber of $X_V$ is strongly minimal, disintegrated and has no algebraic solutions, there must be a dense set of $\bar w$ such that there is a bijection $\phi$ defined over $\m Q (\bar w , t)$ from $X_V (\bar w )$ and $X_V (\bar v)$ for $\bar v$ as in equation (\ref{lastcase5}). The formula defining $\phi$ is given by a differential rational map of bounded degree, and as such, for an open dense set $U$ of $\bar w$ on which $\phi$ is a bijection, for all sufficiently large $k_1,k_2,k_3 \in \m Z$, we must have $\phi (y) \neq \psi_{-2k_1+\frac{m_1}{4},-2k_2+\frac{m_2}{4},-2k_3+\frac{m_3}{4}}(y)$ for all $y \in X_{V} (\bar w)$ and there are not $y_1, y_2 \in X_V(\bar w )$ such that $\phi (y_1) = \psi_{-2k_1+\frac{m_1}{4},-2k_2+\frac{m_2}{4},-2k_3+\frac{m_3}{4}}(y_2)$ and $\phi (y_2) = \psi_{-2k_1+\frac{m_1}{4},-2k_2+\frac{m_2}{4},-2k_3+\frac{m_3}{4}}(y_1)$ where $\psi_{-2k_1+\frac{m_1}{4},-2k_2+\frac{m_2}{4},-2k_3+\frac{m_3}{4}}$ is the B\"acklund transformation mapping $X_{V} (w_1,w_2,w_3,w_4)$ to $X_{V} (v_1,v_2,v_3,v_4)$ where 
\begin{align*} 
    \left( \begin{matrix}
v_1 \\
v_2 \\
v_3
\end{matrix}\right) =
\left( \begin{matrix}
1 & 0 & 0 & -2k_1+\frac{m_1}{4} \\
0 & 1 & 0 & -2k_3+\frac{m_2}{4}\\
0 & 0 & 1 & -2k_3+\frac{m_3}{4}
\end{matrix}\right)  \left( \begin{matrix}
w_1 \\
w_2 \\
w_3 \\
1
\end{matrix}\right)
\end{align*}
and $\sum v_i = 0.$ To see this last property, note that for generic $\bar w$, solutions of $X_V(\bar w)$ and their first derivative are algebraically independent from each other over $\m C(t)$ by results of \cite{nagloo2017algebraic}, and $y_1,y_2$ with the above property contradict this. The same argument applies with $((z_0)_*)^i \circ \psi_{-2k_1+\frac{m_1}{4},-2k_2+\frac{m_2}{4},-2k_3+\frac{m_3}{4}}$ for $i=1,2,3$ in place of $\psi_{-2k_1+\frac{m_1}{4},-2k_2+\frac{m_2}{4},-2k_3+\frac{m_3}{4}}$ where $(z_0)_*$ is the map defined by Okamato in \cite{okamoto1987studies5} and introduced in section 1 of \cite{watanabe1995solutions}. 

Let $w_1,w_2,w_3 \in \m Z$ and $w_4 = -w_1-w_2-w_3$ be in the open set $U$ and also in the corresponding open sets for $((z_0)_*)^i \circ \psi_{-2k_1+\frac{m_1}{4},-2k_2+\frac{m_2}{4},-2k_3+\frac{m_3}{4}}$ for $i=1,2,3$. By the results of Watanabe referenced above (specifically, here the birational transformations developed before question 5.1 and Lemma 3.4 and Theorem 1.3 of \cite{watanabe1995solutions}) $X_V (\bar w )$ is of Morley rank one and Morley degree $4$ with $3$ subvarieties of order one given as zero sets of Riccati equations. The bijection $\phi$ is a map from $X_V (\bar w )$ to $X_V (\bar v )$ where $\bar v$ is given as in equation (\ref{lastcase5}). The map $\chi:=  \psi_{-2w_1+\frac{m_1}{4},-2w_2+\frac{m_2}{4},-2w_3+\frac{m_3}{4}} \circ \phi^{-1}$ is a bijection from $X_V (\bar w)$ to itself, and by the properties of $\phi$ stipulated above, if we pick $y_1, y_2 \in X_V (\bar w)$, then $y_1, y_2, \chi (y_1) , \chi (y_2)$ are four distinct solutions of $X_V(y_1,y_2)$. Further, choosing $y_1, y_2$ in the same Riccati subvariety of of $X_V( \bar w)$, we must have $\chi (y_1), \chi (y_2) $ in the same Riccati subvariety. Now, by Lemma 1.4 of \cite{watanabe1995solutions}, the map $(z_0)_*$ permutes the four Riccati subvarities in question ($(z_0)_*$ gives a $4$-cycle on the solution sets). So, for some $i,$ we now have four distinct solutions $y_1,y_2,(z_0)_* ^i \circ \chi (y_1), (z_0)_* ^i  \circ \chi (y_2).$ Note that $(z_0)_* ^i \circ \chi (y_j) \in \m C (t) ^{alg}  ( y_j )$ for $j=1,2$. But, this contradicts Proposition 2.5 of \cite{nagloo2019algebraic}; any three distinct solutions of the Riccati equation are algebraically independent. Thus $\epsilon_1=\epsilon_2=\epsilon_3=-1$ is impossible, and so we must have 

\begin{align*} \label{lastcase5}
    \left( \begin{matrix}
v_1 \\
v_2 \\
v_3 
\end{matrix}\right) =
\left( \begin{matrix}
1 & 0 & 0 & \frac{m_1}{4} \\
0 & 1 & 0 & \frac{m_2}{4}\\
0 & 0 & 1 & \frac{m_3}{4}
\end{matrix}\right)  \left( \begin{matrix}
w_1 \\
w_2 \\
w_3 \\
1
\end{matrix}\right)
\end{align*}

with $m_i$ all congruent modulo $4.$ Thus $\bar v$ and $\bar w$ have the desired form. The fact that the map must be the known B\"acklund transformation follows as in the conclusion of the proof of Proposition \ref{genone}.
\end{proof}

\section{Painlev\'e six} \label{P6sec}
\subsection{Painlev\'e six and the Manin map}

In this subsection we will deal with the sixth Painlev\'e equation expressed in the terms used in  \cite{hitchin1992poncelet} and \cite{Manin6}, where the equation is given in the following form: 

\begin{equation} \label{MP6} \tag{$P_{VI}$} \begin{aligned} \frac{d^2 x }{dt^2} &=& \frac{1}{2} \left( \frac{1}{x}+ \frac{1}{x-1} + \frac{1}{x-t} \right) \left( \frac{dx}{dt} \right) ^2 - \left( \frac{1}{t}+ \frac{1}{t-1} + \frac{1}{x-t} \right) \left( \frac{dx}{dt} \right)\\ & & + \frac{x(x-1)(x-t)}{t^2(t-1)^2} \left( \alpha + \beta \frac{t}{x^2} + \gamma \frac{t-1}{(x-1)^2} + \delta \frac{t(t-1)}{(x-t)^2} \right) \end{aligned}  \end{equation}
where $\alpha, \beta, \gamma, \delta \in \m C$ are complex parameters. When the parameters are generic, that is, $td _ {\m Q } \left(\alpha, \beta, \gamma, \delta \right)=4,$ as we proved in Theorem \ref{strongpain}, the equation is strongly minimal. The proof of \cite{watanabe1998birational} establishes additional structural results beyond strong minimality of equation \ref{MP6} with generic parameters, but is quite technical - this work will be discussed in the next subsection. 

The Manin map for the family of Elliptic curves $E_t$ given by the projective closure of $y^2= x(x-1)(x-t)$ is a homomorphism from the elliptic curve to the additive group and has been the subject of intense model theoretic investigation. The Manin map was defined originally in \cite{ManinDiff} in additional generality. The specific formula for the map we use comes from \cite{PongThesis} which slightly corrected a formula of \cite{ManinDiff}. By \cite[Theorem B.10]{PongThesis} we have the following formula for the Manin map: 

$$\mu (x,y) = - \frac{y}{(x-t)^2}+ \left(2t(t-1)\frac{x'}{y}\right) ' + \frac{t(t-1) x'}{x-t)y}.$$

After simplifying the above expression, one can check that the kernel of the map $\mu(x,y))$ is defined by 

\begin{equation} \label{MK1} \tag{MK} \begin{aligned} \frac{d^2 x }{dt^2} &=& \frac{1}{2} \left( \frac{1}{x}+ \frac{1}{x-1} + \frac{1}{x-t} \right) \left( \frac{dx}{dt} \right) ^2 \\ & & - \left( \frac{1}{t}+ \frac{1}{t-1} + \frac{1}{x-t} \right) \left( \frac{dx}{dt} \right) \\& & + \frac{x(x-1)}{2t(t-1)(x-t)} \end{aligned}  \end{equation}

So, the kernel of $\mu(x,y))$ is defined by \ref{MP6} with parameters $(0,0,0,\frac{1}{2})$. By a result of Hrushovski \cite{HrushovskiMordell-Lang}, the Kernel of $\mu(x,y))$ is strongly minimal - see \cite[section 5]{marker2000manin} for a short exposition of this result. 

\subsection{Algebraic relations between solutions}

We will next describe the group of transformations of the family
$P_{VI}$. In this section, we work with the Hamiltonian form of $P_{VI}$ given by:

\[
\begin{array}{rll}
t(t-1)q'&=&2q(q-1)(q-t)p+(v_1+v_2-2v_3)q^2\\
& &+(2v_3t-v_1-v_2+v_3+v_4)q-(v_3+v_4)t\\
t(t-1)p'&=&-3q^2p^2+2(1+t)qp^2-tp^2-2(v_1+v_2-2v_3)qp\\
& &-(2v_3t-v_1-v_2+v_3+v_4)p-(v_3-v_2)(v_3-v_1)\\
\end{array}.\]

where $\alpha=\frac{1}{2}(v_1-v_2)^2$, $\beta =-\frac{1}{2}(v_3+v_4)^2$, $\gamma =\frac{1}{2}(v_3-v_4)^2$ and 
$\delta =-\frac{1}{2}(1-(1-v_1-v_2)^2)$ (cf. \cite{watanabe1998birational}). For $\bar v \in \m C^4$, we will denote by $X_{VI} (\bar v)$ the solution set of the Hamiltonian system.

\begin{rem} The form of $P_{VI}$ studied in the previous section is equivalent to its Hamiltonian form we consider here when $q\neq 0,1,t$. While $P_{VI}(0,0,0,\frac{1}{2})$ is strongly minimal as discussed above, the corresponding Hamiltonian system $X_{VI}(\frac{1}{2},\frac{1}{2},0,0)$ is not strongly minimal! Indeed, it is not hard to see that $q=0,1,t$ define order 1 subvarieties.
\end{rem}

We define the following transformations: 

\begin{eqnarray*} 
	s_1  ( v_1, v_2, v_3 ,v_4 ) & := & ( v_2, v_1,  v_3 ,v_4 ) \\
	s_2  ( v_1, v_2, v_3 ,v_4 ) & := & ( v_1,  v_3 , v_2, v_4 ) \\
	s_3  ( v_1, v_2, v_3 ,v_4 ) & := & ( v_1,  v_2, v_4, v_3  ) \\
	s_4  ( v_1, v_2, v_3 ,v_4 ) & := & ( v_1,  v_2, -v_4, -v_3  ) \\
	s_5  ( v_1, v_2, v_3 ,v_4 ) & := & ( -v_2+1,  -v_1+1, v_3, v_4 ) \\
\end{eqnarray*}
Let $W$ be the group generated by $s_1, \ldots , s_5$. Then from the proof of Theorem 1.1 in \cite{watanabe1998birational} we have that for any $g \in W$, there is a birational bijective map $g_*$ such that the following diagram commutes:  

\[ \begin{CD}
X_{VI} @>g_*>> X_{VI}\\
@VV \pi_2 V @VV \pi_2 V\\
\m C^4 @> g  >> \m C^4
\end{CD}.\] 
It is thus the case that for any two points $\bar v, \bar w \in C^4$ which are related, $g \bar v = \bar w$, by an element of $W$, we have $X_{VI} (\bar v )$ is isomorphic to $X_{VI} ( \bar w).$ By composing the generators of $W$ given above, one can see that $g \bar v = \bar w$ for some $g \in G$ if and only if there is some $\sigma \in S_4$ and some $i,j,k,l \in \{0,1 \}$ such that $i+j+k+l \in \{0,2,4\}$ so that \[(v_1, v_2, v_3, v_4) - ((-1)^i w_{\sigma (1)}, (-1)^j w_{\sigma (2)}, (-1)^k w_{\sigma (3)}, (-1)^l w_{\sigma (4)} ) \in \m Z^4\] and 
\[v_1 - (-1)^i w_{\sigma (1)}+ v_2- (-1)^j w_{\sigma (2)}+v_3-(-1)^k w_{\sigma (3)}+v_4-(-1)^l w_{\sigma (4)} \in 2 \m Z.\] 

Notice that the orbit of any single point $\bar \alpha \in \m C^4$ under the group $W$ is Zariski dense in $\m A^4$. We now discuss the result of Watanabe with regard to model the strong minimality of $X_{VI}$.

Let $R$ be the collection of $24$ vectors of the following form: 
\[ (\pm 1 , \pm 1, 0 , 0) , (\pm 1 ,, 0  \pm 1,  0) , (\pm 1 ,0,0, \pm 1) , (0, \pm 1 , \pm 1,  0) , (0, \pm 1 ,0 , \pm 1) , (0,0, \pm 1 , \pm 1) .\] Let $\langle \bar v , \bar w \rangle = v_1   w_1 +  v_2   w_2 +  v_3   w_3 +  v_4   w_4$ denote the usual inner product on $\m C^4$. For $\alpha \in R$ and $k \in \m Z$, define \[H_{\alpha, k } = \{ \bar v \in \m C^4 \, | \, \langle \bar v , \alpha \rangle = k \}.\] 

Define
\begin{eqnarray} M & : = & \bigcup _{ \alpha \in R , \, k \in \m Z} H_ { \alpha , k} \\
P  & : = & \bigcup _{ \text{linearly ind. } \alpha, \beta  \in R , \, k, l  \in \m Z}  \left( H_ { \alpha , k} \cap  H_ { \beta , l} \right) \\
L & : = &  \bigcup _{ \text{linearly ind. } \alpha , \beta , \gamma \in R , \, k, l, m  \in \m Z} \left( H_ { \alpha , k} \cap  H_ { \beta , l} \cap H_{\gamma , m } \right)   \\
D  & : = & \bigcup _{ \text{linearly ind. } \alpha , \beta , \gamma, \delta  \in R , \, k, l, m, n  \in \m Z} \left( H_ { \alpha , k} \cap  H_ { \beta , l} \cap H_{\gamma , m } \cap H_{\delta , n} \right) 
\end{eqnarray} 

The results quoted next come from \cite{watanabe1998birational}. They depend in each case on the action of a group of affine linear transformations on the family of equations. The specific cited references derive the facts below for parameters in the fundamental domain of the group action.









\begin{prop}\cite[Theorem 2.1]{watanabe1998birational} \label{P6ranks}
\begin{enumerate} 

\item  For $\bar v \notin M$, the solution set $X_{VI} (\bar v)$ of the sixth Painlev\'e equation is strongly minimal . 

\item For $\bar v \in M \setminus P$, $X_{VI} (\bar v)$ is Morley rank one and Morley degree two. 

\item For $\bar v \in P \setminus L$, $X_{VI} (\bar v)$ is Morley rank one and Morley degree three. 

\item For $\bar v \in L \setminus D$, $X_{VI} (\bar v)$ is Morley rank one and Morley degree four. 

\item For $\bar v \in D$, $X_{VI} ( \bar v)$ is Morley rank one and Morley degree five. 
\end{enumerate} 
\end{prop}

\begin{thm} Let $\bar v \in \m C^4$ be generic over $\m Q$ and let $\bar w \in \m C^4.$ Then if $X_{VI} (\bar v )$ and $X_{VI} (\bar w) $ are nonorthongonal, it must be that after some permutation of the coordinates of $\bar w,$  $v_i=a_i w_i +c_i$ for $a_i \in \{-1,1\}$ and $c_i \in \m Z$.
\end{thm} 

\begin{proof} 
When the tuple $\bar w$ is not generic or the two tuples are not interalgebraic over $\m Q$, the argument is easier - specialize $\bar v$ to a tuple such that $X_{VI} (\bar v)$ has Morley degree not equal to the Morley degree of $X_{VI} (\bar w).$ We will omit the details of this proof as it is similar to Case I of the proof of Proposition \ref{P5ortho}. 

So, let $\bar v, \bar w \in \m C^4$ be two generic tuples over $\m Q$ which are interalgebraic. Nagloo \cite{nagloo2019algebraic} shows that for generic parameter values, $X_{VI}$ satisfies the algebraic independence conjecture. It follows that if $X_{VI} ( \bar v )$ and $X_{VI} (\bar w)$ are nonorthogonal, then there is a bijection between $X_{VI} ( \bar v )$ and $X_{VI} (\bar w)$, which by Lemma \ref{GST} can be taken to be defined over $\m Q (t, \bar v, \bar w ).$ So, let $\phi ( x,y, \bar v, \bar w, t)$ define the bijection witnessing the nonorthogonality of $X_{VI} ( \bar v )$ and $X_{VI} (\bar w)$. 

The reader can verify that for $\alpha, \beta, \gamma, \delta \in R$ linearly independent as in Proposition \ref{P6ranks} we have that the matrix $$\left( \begin{matrix}
\alpha \\
\beta \\
\gamma \\
\delta
\end{matrix}\right)$$ has determinant $\pm 2$ and $\bar v \in D$ if and only if one of the following occurs: 
\begin{enumerate} 
\item $\bar v \in \m Z^4$,
\item $\bar v \in (\frac{1}{2}+ \m Z)^4$,
\item two of the coordinates of $\bar v$ are in $\m Z$ and two of the coordinates of $\bar v \in \frac{1}{2}+ \m Z.$ 
\end{enumerate} 
Now, since we are in the case that $\bar v, \bar w$ are interalgebraic over $\m Q$, we must have polynomials $p_i$ with coefficients in $\m Q$ such that $p_i (w_i , 
\bar v) = 0$. However, as definable bijections preserve Morley degree, and the set $D$ is dense in $\m A^4,$ it must be that for all but a proper closed subset of $\bar v \in D$, if $p_i (w_i, \bar v ) = 0$, then $w_i \in D$. It follows, identically to the argument of Proposition \ref{P3orth} that $p_i$ must be linear with each variable of $\bar v$ appearing in precisely one of the $p_i$, so $p_i=w_i+a_i v_{\sigma(i)}+c_i$. Again, similarly to earlier proofs, the coefficients of $p_i$ must be $\pm 1$, and the constant term must be in $\m Z \cup (\frac{1}{2}+ \m Z).$ 

Now, for all $\bar v $ outside of a closed subset of the orbit of $(0,0,0,\frac{1}{2})$, the solution to the system given by the $p_i$ must give $\bar w$ in the orbit of $(0,0,0,\frac{1}{2})$, since these are the unique fibers of $X_{VI}$ which are nonorthogonal to a Manin kernel.\footnote{This follows from the classification of algebraic solutions of Painlev\'e six of \cite{lisovyy2014algebraic}} It follows that we must have the constant terms of the $p_i$ be integers as any system with non-integer constant terms on some $p_i$ cannot preserve the orbit $(0,0,0,\frac{1}{2})$ outside of the closed subset. 
\end{proof} 

We can in fact prove slightly more than the statement of the previous proposition - when \[w_1 + a_1 v_{\sigma (1)}+ w_2+a_2 v_{\sigma (2)}+w_3+a_3 v_{\sigma (3)}+w_4+a_4 v_{\sigma (4)} \in 2 \m Z, \] there is a known B\"acklund transformation between the fibers $X_{VI} (\bar v)$ and $X_{VI} (\bar w),$ and it follows similar to the arguments of previous sections that the bijection defined by the formula $\phi ( x,y, \bar v, \bar w, t)$ in the proof must be this B\"acklund transformation. Thus, one can establish that the classically known B\"acklund transformations are the only instances of nonorthogonality between generic fibers of Painlev\'e six by showing that we must have \[w_1 + a_1 v_{\sigma (1)}+ w_2+a_2 v_{\sigma (2)}+w_3+a_3 v_{\sigma (3)}+w_4+a_4 v_{\sigma (4)} \in 2 \m Z .\] The methods considered here are not quite enough to show this, though perhaps a very detailed analysis of the algebraic solutions \cite{lisovyy2014algebraic} would allow one to establish this fact.

\section{Degenerations of $P_{III}$} \label{DegeneP3}
The third Painlev\'e family of equations is given by \begin{equation} \label{P3og} \frac{d^2y}{dt^2} = \frac{1}{y} \left( \frac{dy}{dt} \right) ^2- \frac{1}{t} \frac{dy}{dt} + \frac{\alpha y^2 + \beta }{t}+ \gamma y^3+ \frac{\delta}{y} \end{equation} for $\alpha, \beta, \gamma, \delta$ complex numbers. In section \ref{twotwo}, the equation was put into Hamiltonian for, under the assumption that $$\delta \neq 0 , \, \gamma \neq 0.$$

In this section, we study several families of equations of the $P_{III}$ form which have not previously received analysis from the model theoretic perspective, in the process establishing severeal new transcendence results. We consider cases (our notation follows that of \cite{ohyama2006studies}): 
\begin{itemize}
\item $(D_7)$: $\gamma = 0, \, \alpha \delta \neq 0$ \emph{or} $\delta =0, \, \beta \gamma \neq 0.$
\item $(D_8)$: $\gamma = \delta =0$ and $\alpha \beta \neq 0.$
\item $(Q)$: $\alpha=0, \gamma =0$ or $\beta = 0, \delta =0.$ 
\end{itemize} 

All cases of the four parameter values for $P_{III}$ fall into the generic case considered in section \ref{twotwo} or the above cases after applying suitable transformations from \cite[Theorem 1]{ohyama2006studies}.  Most work on the third Painlev\'e equation has been in so-called generic or non-degenerate case ($\delta \neq 0, \gamma \neq 0$) which was considered in section \ref{twotwo} and from a model theoretic perspective first in \cite{nagloo2017algebraic}. In this section, we consider the other cases, with our main reference being \cite{ohyama2006studies}. Cases $D_8$ and $Q$ require very little work, and will be quickly dispensed with, but case $D_7$ requires more considerable analysis. 


\begin{rem} We record here one small note regarding the comments at the top of page 150 of \cite{ohyama2006studies}. There, the authors mention that showing $td_{\mathbb C (t) } (K) = 2$ in the case that $K$ is the differential extension of $\m C(t)$ generated by a solution of our equation is sufficient to establish the irreducibility of the equation in the sense of Umemura. This is not the case, as one has to establish this fact for more general differential fields than $\m C(t).$ Luckily, the authors do in fact accomplish this more general task later in their paper (Section 5).\end{rem}  

\subsection{Painlev\'e III - $D_8$ and $Q$}
We begin with $P_{III}$ with parameters satisfying assumption $(D_8)$ as above. In this case, \cite{ohyama2006studies} shows that each equation in the family is birationally isomorphic to equation $P_{III'}(-4,4,0,0)$, which is birationally isomorphic to a single equation in the non-degenerate family by Theorem 1, part (iii) of \cite{ohyama2006studies}. Thus the analysis from section \ref{twotwo} applies. 

In case $(Q)$, assume without loss of generality that $\beta = \delta=0$ (otherwise apply transformation 3 from Theorem 1 \cite{ohyama2006studies}). Now, the equation \ref{P3og} after the transformation $s= t^2$ and $z = ty$ is given by: 
\begin{equation} \frac{d^2z}{ds^2} = \frac{1}{z} \left( \frac{dz}{ds} \right) ^2- \frac{1}{s} \frac{dz}{ds} + \frac{\alpha z^2 }{4s^2}+ \frac{\gamma z^3}{4s^2} \label{P3Q} \end{equation}
has the family of order one subvarieties $$\left( s \frac{dz}{ds} \right) ^2 - \frac{\gamma}{4} z^4 - \frac{\alpha} {2} z ^3 = (c z)^2 $$ where $c \in \m C$ is an arbitrary constant (see page 312 of \cite{okamoto1987studies}). It follows that in this case the equation has Lascar and Morley rank two. When $c \neq 0$, a solution to the equation can be written as a rational function of $d t^ c$ where $d$ is an arbitrary constant (when $c =0,$ a solution to the equation is given by a rational function of $\log t + d$) \cite[formulas (1.4) and (1.4)']{okamoto1987studies}.

\subsection{Painlev\'e III - $D_7$}

\begin{equation} \frac{d^2y}{dt^2} = \frac{1}{y} \left( \frac{dy}{dt} \right) ^2- \frac{1}{t} \frac{dy}{dt} + \frac{\alpha y^2 + \beta }{t}+ \gamma y^3+ \frac{\delta}{y} \label{P31} \end{equation}

Performing the change of variables $s= t^2$ and $z = ty$ one obtains the following system equivalent to \ref{P31}: 

\begin{equation} \frac{d^2z}{ds^2} = \frac{1}{z} \left( \frac{dz}{ds} \right) ^2- \frac{1}{s} \frac{dz}{ds} + \frac{\alpha z^2 }{4s^2}+ \frac{\beta}{4s}+ \frac{\gamma z^3}{4s^2}+ \frac{\delta}{4z} \label{P32} \end{equation} 

By another suitable change of variables (see equation (4) page 148 of \cite{ohyama2006studies}) equation \ref{P32} can be transformed to the following form: 

\begin{equation} 
 \frac{d^2z}{ds^2} = \frac{1}{z} \left( \frac{dz}{ds} \right) ^2- \frac{1}{s} \frac{dz}{ds} + \frac{2 z^2 }{s^2}+ \frac{\beta}{4s}+ \frac{1}{z} \label{P3D7}
\end{equation} 

Following this, \cite{ohyama2006studies} investigate the Hamiltonian form of equation \ref{P3D7}: 

\begin{align}
\label{P3Ham}
\tag{$\mc H$} 
\begin{split}
    t \frac{dq}{dt} & =  2 q^2 p + \alpha _1 q + t \\
    t \frac{dp}{dt} & =  -2qp^2 - \alpha_1 p -1 
    \end{split} 
\end{align} 
where $q$ and $p$ are functions of $t$ and $\alpha_1$ is a complex parameters. When we wish to specify $\alpha_1 = c$ for some $c \in \m C,$ we write the system as $\mc H ( c).$

Let $A_1$ be the group of affine transformations generated by $s_1 (\alpha_1) = -\alpha_1$ and $ \sigma (\alpha_1) = 1-\alpha_1$. For every element $\theta \in A_1$ there is a birational bijection between the solution sets $\mc H (\alpha_1) \rightarrow \mc H (\theta (\alpha_1)).$ 

Gromak \cite{gromak1979algebraic, gromak1984reducibility} classified algebraic solutions, but we follow the notation and conventions of \cite{ohyama2006studies}. The system of equations $\mc H ( 1)$ has algebraic solutions given by the conjugates of \begin{equation} \label{P3D7alg} q(t) = -\frac{1}{2} (2t)^{\frac{2}{3}} , \, p(t) = \frac{1}{3(2t)^{\frac{2}{3}}} - \frac{1}{(2t)^{\frac{1}{3}}}\end{equation} over $\m Q$. It follows that $\mc H ( n )$ has at most $3$ algebraic solutions for all $n \in \m Z.$ There are no algebraic solutions to $\mc H(\alpha)$ for $\alpha \notin \m Z$ \cite[section 6]{ohyama2006studies}. 

From \cite[Proposition 17]{ohyama2006studies}, it follows that $\mc H (\alpha)$ satisfies Umemura's condition $(J)$ for any $\alpha \in \m C.$ In model-theoretic terms: 

\begin{thm} \label{D7sm} For any $\alpha \in \m C,$ the solution set of $\mc H (\alpha)$ is strongly minimal. 
\end{thm} 

Next, we prove the weak form of the algebraic independence conjecture for solutions of $\mc H( \alpha )$ where $\alpha$ is transcendental. The proof uses a number of deep results from the model-theoretic classification of strongly minimal sets in differentially closed fields, an overview of which one can find in \cite[Section 2]{nagloo2017algebraic}. 

\begin{thm} \label{trivialP3} The solution set of $\mc H(\alpha )$ is geometrically trivial. 
\end{thm} 
\begin{proof} Let $X(\alpha)$ denote the solution set of $\mc H(\alpha ).$ By the previous theorem, $X(\alpha)$ is strongly minimal and order two. Thus, $X(\alpha )$ is orthogonal to the constants. So, by the trichotomy theorem for differentially closed fields (see Theorem 2.8 of \cite{nagloo2017algebraic}), if $X(\alpha)$ is not geometrically trivial, then it must be the case that $X(\alpha)$ is nonorthogonal to the Manin kernel of an abelian variety $A.$ It follows from \cite[Fact 2.5 (ii)]{nagloo2017algebraic} that $A$ must be an elliptic curve which is not isotrivial (not isomorphic to an elliptic curve defined over $\m C$). In fact, it follows from \cite[Fact 4.1 part (a)]{sanchez2019isolated} that $A$ must be defined over $\m C(t)^{alg}.$ 

In this case, by the Zilber trichotomy for differentially closed fields, the strongly minimal set $X(\alpha )$ is locally modular, and since there are no $\m C(t)^{alg}$ points of $X(\alpha)$ by \cite[section 6]{ohyama2006studies}, it follows that after adjoining \emph{any} single solution, $z$ to $X(\alpha )$, the set is modular and strongly minimal.\footnote{Since there are no $\m C(t)^{alg}$ solutions or subvarieties of $X(\alpha)$, all points on the definable set have the same type over $\m C(t)^{alg}$.}

Let $A_w$ denote the elliptic curve given by the closure of $y^2 = x (x-1)(x-w)$, and let $w \in \m C(t)^{alg}$ be such that $X(\alpha )$ is nonorthogonal to $A_w$. Now, by Lemma \ref{GST} (Corollary 5.5 of \cite{GST}) we must have a definable set $Y \subset X(\alpha ) \times A_w^ \sharp $ which projects onto all but a finite subset of $X(\alpha)$ defined by a formula $\chi (u,z,\alpha)$ of size at most $n$ and all but a finite subset of $A_w^\sharp$ and $Y$ has finite fibers of size at most $k$. 

Note here that $w$ might be algebraic with $\alpha$ over $\m Q (t)$ or perhaps $w, \alpha$ are independent over $\m Q(t)$. We will obtain a contradiction in either case:
\begin{itemize} 
\item Case I: $(w, \alpha)$ is a zero of an irreducible polynomial $p(w,\alpha)$ over $\m Q(t)$.
\item Case II: $w \in \m Q(t )^{alg}$. 
\end{itemize} 
. Let $\phi (u, (x,y) , z , \alpha, w)$ define the correspondence $Y$. 

Define property $\star_{\alpha,1}$ to hold if we have that for all $z \in X(\alpha),$ $\phi (u, (x,y) , z , \alpha, w)$ is a finite-to-finite correspondence with fibers of size at most $k$ between $X(\alpha) \setminus \chi (u,z,\alpha)$ and all but at most $n_1$ elements of $A_w^\sharp$ for $w$ such that $p(w,\alpha)=0$, where $\chi (u,z,\alpha) \subset X(\alpha)$ defines a subset of size at most $n$.

If we are in Case I, property $\star_{\alpha,1}$ is a first-order property of $\alpha$ over $\m Q(t)$, and it holds for generic $\alpha \in \m C.$ The definable set $v'=0$ is strongly minimal, and so it must be that $\star_{c , 1}$ holds for all but finitely many $c \in \m C.$ 

So, there is some $n \in \m Z$ such that the property holds. Now, $\phi (u, (x,y) , z , n, w)$ defines a finite-to-finite correspondence between $X(n) \setminus \chi (u,z,n)$ and all but at most $n_1$ elements of $A_w^{\sharp}$. Choose $z$ to be a solution of $X(n)$ given by equation \ref{P3D7alg} above. Now there is a finite-to-finite correspondence defined over $\m C(t)^{alg}$  between a Manin kernel $A_w^{\sharp}$ of an elliptic curve over $\m C(t)^{alg}$ and $X(n)$ excepting finitely many points.

But $A_w^\sharp $ has infinitely many $\m C(t) ^{alg}$ points and definable finite-to-finite correspondences over $\m C(t)^{alg}$ preserve that the points are in $\m C(t)^{alg}.$ Thus $X(n)$ has infinitely many $\m C(t)^{alg}$ points, but this contradicts \cite[section 6]{ohyama2006studies}. 

Case II follows by a very similar argument after defining property $\star_{\alpha,2}$ to hold if we have that for all $z \in X(\alpha),$ $\phi (u, (x,y) , z , \alpha, w)$ is a finite-to-finite correspondence with fibers of size at most $k$ between $X(\alpha) \setminus \chi (u,z,\alpha)$ and all but at most $n_1$ elements of $A_w^\sharp$ where $\chi (u,z,\alpha) \subset X(\alpha)$ defines a subset of size at most $n$.\footnote{Note that $\star_{\alpha, 2}$ is the same as $\star_{\alpha, 1}$ except for the vanishing of polynomial $p.$} The remainder of the argument is the same as Case I, but here we leave $w$ fixed as we specialize $\alpha.$ 
\end{proof} 

\begin{rem} 
Our proof of Theorem \ref{trivialP3} can be readily adapted to give proofs of the triviality of Painlev\'e equations with generic coefficients of each of the other families, a result originally proved in \cite{nagloo2017algebraic} via a slightly different method which exploited the Riccati subvarieties of the families (these Riccati subvarieties correspond to classical solutions to equations in the Painlev\'e families for certain special values of the parameters). Adapting the technique above to any of the other Painlev\'e families is fairly straightforward relative to the classification of algebraic solutions of those families, which we used extensively in the previous sections. Our approach also uses \cite[Fact 4.1 part (a)]{sanchez2019isolated} which Le\'on-S\'anchez and Moosa mention was not widely known previously (though it can be deduced from results of \cite{HrSo}). The technique of \cite{nagloo2017algebraic}, however,  will not work in our case, since our family, $P_{III}$ in the $D_7$ case, has no Riccati subvarieties. 
\end{rem} 

Next, we prove that for transcendental $\alpha,$ $\mc H(\alpha )$ is $\omega$-categorical, the analog of results established in \cite{nagloo2014algebraic} for other Painlev\'e families. 


\begin{thm} \label{catP3} The solution set of $\mc H(\alpha )$ is $\omega$-categorical. 
\end{thm} 

\begin{proof}
We refer the reader to \cite{nagloo2017algebraic, nagloo2014algebraic} for a discussion of the property of $\omega$-categoricity, which in this contexts amounts to showing that over any finitely generated differential field extension $L$ of $\m Q(\alpha , t)$ given any solution $x$ of $\mc H( \alpha )$, there are only finitely many solutions of $\mc H (\alpha)$ which are interalgebraic with $x$ over $L.$ In fact, we will show that for any solution $x $ of $\mc H(\alpha)$, there are at most two other solutions which are interalgebraic with $x$. 

By remark 2.4 of \cite{nagloo2014algebraic}, it is sufficient to take $L = \m Q (\alpha, t).$ Now, assume that there are at least $3$ other distinct solutions of $\mc H(\alpha )$, $y_1, y_2, y_3$, which are interalgebraic with $x$ over $\m Q (\alpha, t ).$ 

Then, $\mc H( \alpha )$ has the property that for any solution $x$, there are at least $3$ additional distinct interalgebraic solutions. This property is expressible by a first order formula depending on $\alpha,$ and thus it must be true of cofinitely many $c \in \m C.$ In particular, it must be true of some $n \in \m Z.$ Then taking $x$ to be one of the three algebraic solutions of $\mc H(n)$ yields a contradiction, since over $\m Q (t)$ there are only $3$ algebraic solutions of $\mc H(n ).$ 
\end{proof} 

Examining the proof of Theorem \ref{catP3}, we can see that any solution of $\mc H(\alpha)$ might be interalgebraic with at most two other solutions. We conjecture there are \emph{no algebraic relations} between solutions of $\mc H(\alpha)$. Proving this conjecture, by the above results, requires only answering the following question affirmatively. 
\begin{ques} \label{quesinddeg}
For $\alpha \in \m C$ transcendental and $x_1, x_2$ solutions of $\mc H (\alpha)$, are $x_1, x_2$ algebraically independent over $\m Q ( \alpha , t)$?   
\end{ques}

\subsection{Orthogonality and $P_{III}$ - $D_7$} 

In this subsection, we analyze the $D_7$ family with respect to the nonorthogonality relation, both with respect to other families of Painlev\'e equations and its own fibers. 

\begin{prop}
Suppose that $\alpha$ is transcendental over $\m Q$. If $\mc H(\alpha )$ is nonorthogonal to $\mc H( \beta),$ then $\alpha + \beta \in \m Z$ or $\alpha -\beta \in \m Z$. 
\end{prop}
\begin{proof} Let $\alpha$ be transcendental over $\m Q$. By Theorems \ref{D7sm} \ref{trivialP3} \ref{catP3}, $\mc H(\alpha)$ is strongly minimal, trivial, and $\omega$-categorical; even more, the proof of \ref{catP3} shows that any solution of $\mc H(\alpha)$ is algebraically independent from all but at most two other solutions of $\mc H(\alpha)$. We will also use the fact that $\mc H (\alpha)$ has no solutions in $\m C(t)^{alg}$ (section 6 of \cite{ohyama2006studies}). 

Suppose that $\mc H(\alpha)$ is nonorthogonal to $\mc H(\beta)$. Then by Lemma \ref{GST} there must be a generically finite-to-finite definable relation between $\mc H(\alpha)$ and $\mc H(\beta)$ given by a formula $\phi (x,y , \alpha, \beta , t)$ over the emptyset. Since $\mc H(\alpha)$ has no algebraic solutions, $\phi $ must be onto $\mc H(\alpha).$ We now consider two cases. 

\begin{enumerate} 
\item Case I: $\alpha$ is algebraically independent from $\beta$. 
\item Case II: $\alpha$ and $\beta $ are interalgebraic. 
\end{enumerate} 

Case Ia: $\beta \notin \m Z$. The formula $\phi (x,y , \alpha, \beta , t)$ gives a generically finite-to-finite correspondence which is onto $\mc H(\alpha)$ for a Zariski open subset of $\alpha \in \m C$. So, consider some $n$ in this open subset. Then $\phi (x,y, n , \beta , t)$ gives a finite-to-finite relation between $\mc H( n ) $ and $\mc H( \beta).$ But then the solutions to $\mc H( \beta )$ which are related to the algebraic solutions of $\mc H( n)$ must be algebraic, since the formula $\phi$ has finite fibers. This is a contradiction, since $\mc H(\beta )$ has no algebraic solutions. 

Case Ib: $\beta \in \m Z.$ The formula $\phi (x,y , \alpha, \beta , t)$ gives a generically finite-to-finite correspondence which is onto $\mc H(\alpha)$ for a Zariski open subset of $\alpha \in \m C$. First, suppose that this correspondence is onto $\mc H( \beta)$; this is impossible, since $\mc H( \beta )$ has algebraic solutions and $\mc H (\alpha ) $ does not. Thus, the algebraic solutions of $\mc H(\beta )$ must lie outside the domain of the relation $\phi$. Now, specialize $\alpha$ to some $n$ in the above open set. But now the algebraic solutions of $\mc H( n)$ must be related to nonalgebraic solutions of $\mc H(\beta )$, a contradiction. 

Next, we consider case II: $\alpha$ and $\beta$ are interalgebraic over $\m Q$. So, let $p(\alpha, \beta )=0$ define the affine irreducible curve over $\m Q^{alg}$ on which $\alpha, \beta $ is a generic point. Then since $\mc H(\beta )$ has no solutions in $\m Q(t)^{alg}$, the relation given by $\phi$ must be onto $\mc H(\beta)$. Thus, $\phi (x,y, \alpha, \beta , t)$ gives a finite-to-finite relation onto both $\mc H(\alpha )$ and $\mc H(\beta )$ for a dense open set of $(\alpha , \beta )$ in the curve $C.$ Since we can not have such a correspondence between $\mc H(n)$ for $n \in \m Z$ and $\mc H(a)$ for $a \notin \m Z,$ it must be the case that for all but finitely many integers $n$, $p(n,y)=0$ has only integer solutions and $p(x,n)$ has only integer solutions. Now, it follows by the same arguments as in \ref{genone} that $p(x,y)$ must be given by $x-y-n$ or $x+y+n$ for some $n \in \m Z.$ 

\end{proof} 

A positive answer to question \ref{quesinddeg} would allow one to (by similar methods to \ref{genone}) show that the relations witnessing nonorthogonality of $\mc H(\alpha)$ and $\mc H (\beta )$ in the previous result must be given by the known B\"acklund transformations - see the beginning of this section and \cite{ohyama2006studies} for details. In general, as with the other families of Painlev\'e equations, we conjecture that the known B\"acklund transformations give all instances of nonorthogonality within the family.  

\begin{thm} Equation $\mc H$ with generic coefficient $\alpha$ is orthogonal to all of the other Painlev\'e equations with generic coefficients. 
\end{thm} 

\begin{proof} Let $X$ be one of the families of $P_{II}$ through $P_{VI}$, and let $\beta$ be a generic parameter (tuple of parameters). Suppose that $X(\beta)$ is nonorthogonal to $\mc H(\alpha )$; by Lemma \ref{GST}, there must be a generically finite-to-finite relation defined over $\m Q(t, \alpha, \beta)$ between $\mc H( \alpha )$ and $X(\beta )$. Neither $X(\beta )$ nor $H ( \alpha )$ have algebraic solutions, and there are no algebraic relations between solutions of $X(\beta )$, so it follows that the relation must be a bijection $X(\beta ) \rightarrow \mc H ( \alpha )$ given by a formula $\phi (x, t, \beta , \alpha)$. The formula $\phi (x,t,\beta , \alpha)$ gives a bijection between the solution sets $X(\beta ) \rightarrow \mc H ( \alpha )$ for all $\beta$ in some Zariski open set. In each of the Painlev\'e families, such a set includes fibers of Morley degree greater than one, while $\mc H$ has no such fibers, a contradiction since definable bijections preserve Morley degree. 
\end{proof}

\section{Besides algebraic relations.} We would also like to mention a few other aspects of our work which may be of interest from the point of view of model theory or differential algebraic geometry. Our results show give the first examples of a positive answer to the following question:

\begin{ques} \label{degcont} Is there a family of differential varieties $X \rightarrow Y$ in which for some $n \in \m N $, $$\{ y \in Y \, | \, d_M (X_y) =n \} $$ is not definable? Here $d_M(-)$ denotes the Morley degree.   
\end{ques}
Having \emph{Morley degree one} is a natural notion of \emph{irreducibility} for definable sets, but the Kolchin topology already comes equipped with its own notion of irreducibility:  a Kolchin-closed set $X \subset \m A^n$ over $K$ is irreducible if the collection of differential polynomials with coefficients in $K$ which vanish on $X$ forms a prime differential ideal. 

\begin{ques} \label{ritt} Is there a family of differential varieties $X \rightarrow Y$ in which $$\{ y \in Y \, | \, X_y \text{ is irreducible} \} $$ is not definable? 
\end{ques} 
Question \ref{ritt} is one of the equivalent forms of the \emph{Ritt problem}, an important open problem which has received considerable attention, e.g. \cite{golubitsky2009generalized}. Questions \ref{ritt} and \ref{degcont} are of a similar flavor, but the interaction of model-theoretic ranks with the Kolchin topology is somewhat enigmatic; for instance, \cite{freitag2012model} gives an example of a definable set whose Kolchin closure has higher Lascar and Morley rank than the original set. 

Finally, we discuss definability and orthogonality. In \cite{HrIt}, Hrushovski and Itai show that if $X$ is any order one strongly minimal strictly disintegrated set, and $Y \rightarrow B$ is a family of definable sets, then $\{b \in B \, | \, Y_b \not \perp X \}$ is a definable set. When $X$ is taken to be the definable subfield of the constants, the set of fibers of a family nonorthogonal to $X$ is not necessarily definable, a fact which figures prominently in a number of works, e.g. \cite{mcgrail2000search}. Every order one set is nonorthogonal to a strictly disintegrated set or the constants \cite{freitag2017finiteness}. 

Our analysis of the Painlev\'e families shows that the analog of the Hrushovski-Itai result does not hold for order two strictly disintegrated sets; the nonorthogonality classes of generic Painlev\'e equations are given by the orbits of discrete groups. This seems to be the first instance in which such a phenomenon has been noticed for trivial strongly minimal sets. A similar phenomenon was already noticed by Hrushovski and Sokolovic~\cite{HrSo}  for locally modular sets; every locally modular strongly minimal set is nonorthogonal to the Manin kernel $A^\sharp$ of a simple Abelian variety $A$ which does not descend to the constants. Two such sets $A^ \sharp$ and $B^\sharp$ are nonorthogonal precisely when $A$ and $B$ are isogenous. Considering the case of elliptic curves for simplicity, $A$ and $B$ are isogenous precisely when their $j$-invariants satisfy a modular polynomial. Our results show a similar structure for nonorthogonality for Painlev\'e equations with generic parameters. 

As we have seen, instances of nonorthogonality have a natural interpretation as functional transcendence results, but they have also played a key role in structural problems concerning differentially closed fields, a theme we have not touched on. For instance, one can use the nonorthgonality structure of any of the Painlev\'e families to give a simple proof that the isomorphism problem for countable differentially closed fields is Borel complete using the ENI-DOP style arguments of \cite{marker2007number}.

\bibliography{Research}{}
\bibliographystyle{plain}

\end{document}